\documentclass[12pt]{amsart}
\usepackage{amsthm, amsmath, amssymb, enumerate}
\usepackage{mathrsfs}
\usepackage{fullpage}
\usepackage{amscd}

\usepackage[colorlinks=true, allcolors=black]{hyperref}
\usepackage{MnSymbol}
\usepackage[all]{xy}
\usepackage{xr-hyper}
\usepackage{hyperref}
\usepackage{color}
\usepackage{cases}
\usepackage{xfrac}

\usepackage[colorinlistoftodos, textsize=tiny]{todonotes}
\def\listtodoname{List of Todos}
\def\listoftodos{\@starttoc{tdo}\listtodoname}

\theoremstyle{plain}

\newtheorem{thm}{Theorem}[section]
\newtheorem{theorem}[thm]{Theorem}
\newtheorem{cor}[thm]{Corollary}
\newtheorem{corollary}[thm]{Corollary}
\newtheorem{lem}[thm]{Lemma}
\newtheorem{lemma}[thm]{Lemma}

\newtheorem{proposition}[thm]{Proposition}
\newtheorem{prop}[thm]{Proposition}
\newtheorem{definition}[thm]{Definition}

\theoremstyle{remark} 
\newtheorem{remark}[thm]{Remark}
\newtheorem{example}[thm]{Example}

\numberwithin{equation}{section}

\newcommand{\ZZ}{\mathbb{Z}}

\newcommand{\RR}{\mathbb{R}}

\newcommand{\NT}{\mathrm{NT}}

\newcommand{\lra}{{\longrightarrow}}

\newcommand{\barint}{\mbox{$ave \int$}}  
\def\barint_#1{\mathchoice
	{\mathop{\vrule width 6pt
			height 3 pt depth -2.5pt
			\kern -8.8pt
			\intop}\nolimits_{#1}}%
	{\mathop{\vrule width 5pt height
			3 pt depth -2.6pt
			\kern -6.5pt
			\intop}\nolimits_{#1}}%
	{\mathop{\vrule width 5pt height
			3 pt depth -2.6pt
			\kern -6pt
			\intop}\nolimits_{#1}}%
	{\mathop{\vrule width 5pt height
			3 pt depth -2.6pt
			\kern -6pt \intop}\nolimits_{#1}}}

\newcommand{\BA}{{\mathbb {A}}}

\newcommand{\BC}{{\mathbb {C}}}

\newcommand{\BF}{{\mathbb {F}}}

\newcommand{\BN}{{\mathbb {N}}}

\newcommand{\BP}{{\mathbb {P}}}
\newcommand{\BQ}{{\mathbb {Q}}}
\newcommand{\BR}{{\mathbb {R}}}

\newcommand{\BV}{{\mathbb {V}}}

\newcommand{\BZ}{{\mathbb {Z}}}

\newcommand{\CA}{{\mathcal {A}}}

\newcommand{\CC}{{\mathcal {C}}}

\newcommand{\CE}{{\mathcal {E}}}
\newcommand{\CF}{{\mathcal {F}}}

\newcommand{\CL}{{\mathcal {L}}}

\newcommand{\CO}{{\mathcal {O}}}

\newcommand{\CS}{{\mathcal {S}}}

\newcommand{\Aut}{{\mathrm{Aut}}}

\renewcommand{\div}{{\mathrm{div}}}

\newcommand{\End}{{\mathrm{End}}}

\newcommand{\Gal}{{\mathrm{Gal}}}
\newcommand{\GL}{{\mathrm{GL}}}

\newcommand{\Id}{{\mathrm{Id}}}
\newcommand{\Jac}{{\mathrm{Jac}}}

\newcommand{\Pic}{\mathrm{Pic}}

	\newcommand{\SL}{{\mathrm{SL}}}

	\newcommand{\tor}{{\mathrm{tor}}}

	\newcommand{\vol}{{\mathrm{vol}}}
	
	\newcommand{\wt}{\widetilde}

	\newcommand{\norm}[1]{\|{#1}\|}

	\newcommand{\incl}{\hookrightarrow}

	\newcommand{\iso}{\overset \sim \lra}

	\newcommand{\sM}{{\mathscr {M}}}
	\newcommand{\sC}{{\mathscr {C}}}
	\newcommand{\sJ}{{\mathscr {J}}}
	\newcommand{\sA}{{\mathscr {A}}}
	
	\newcommand{\sN}{{\mathscr {N}}}
	\newcommand{\sE}{{\mathscr {E}}}
	
	\newcommand{\sP}{{\mathscr {P}}}
		\newcommand{\sK}{{\mathscr {K}}}
	
	\newcommand{\betti}{\mathrm{betti}}

	\newcommand{\hFal}{h_{\mathrm{Fal}}}
	
\begin{document}
		\title{Bigness of Canonical Quadratic Points on Curves of Genus 4}
			\author{Jiahui Gao}
		\maketitle
\setcounter{tocdepth}{1}

	\begin{abstract}
		A central problem in arithmetic geometry is to construct non-torsion rational points on elliptic curves. We study a canonical quadratic point $\xi_C \in \Jac(C)$ attached to a smooth non-hyperelliptic curve of genus $4$ and use it to produce such points on elliptic curves arising from families of genus $4$ curves. We introduce a notion of bigness for sections of abelian schemes and establish a criterion in terms of modular variation of abelian quotients, using adelic line bundles and Betti maps. As applications, we prove that $\xi_C$ is big on the triple-involution locus and on certain CM families, obtaining in particular non-torsion rational points on the associated elliptic curves and Northcott-type finiteness results.
	\end{abstract}
	
\tableofcontents
	
	\section{Introduction}
		Let $K$ be a field, and let $C$ be a smooth non-hyperelliptic curve of genus 4 over $K$. The canonical linear system realizes $C$ as a smooth curve in \(\BP^3\), which is the complete intersection of a unique quadric $Q$ and a cubic. When the quadric $Q$ is smooth, it becomes isomorphic to \(\BP^1 \times \BP^1\) over an extension $K'/K$ of degree $1$ or $2$ . The two rulings then induce two morphisms
	$$
	\pi_1,\pi_2 : C_{K'} \longrightarrow \BP^1_{K'}
	$$
	of degree 3.
	
		Fix a point $\infty \in \BP^1(K')$. The divisor classes $\pi_1^*(\infty)$ and $\pi_2^*(\infty)$ define points of $\Pic^3(C_{K'})$, and their difference determines a point
	$$
	\xi_C := [\pi_1^*(\infty)]-[\pi_2^*(\infty)] \in \Jac(C)(K').
	$$
	This point is independent of the choice of $\infty$, and is well defined up to sign. We refer to $\xi_C$ as the \emph{canonical quadratic point} attached to $C$.
	
	Let $\sM$ denote the moduli space of smooth curves of degree $(3, 3)$ in $\BP^1\times \BP^1$ modulo $\Aut(\BP^1)\times \Aut(\BP^1)$ over $\BQ$. Then we have an \'{e}tale cover  $\sM\to \sM_{4, \BQ}$ whose image is exactly the open subset of non-hyperelliptic curves with smooth quadric $Q$'s. Let 
	$$\pi: \sC\to \sM$$
	be the universal curve, and let 
	$$\sJ\to \sM$$
	be the relative Jacobian. The canonical quadratic points over the fibers  define a section 
	$$\xi: \sM\to \sJ.$$

A theorem of Xue\cite{Xu15} shows that \(\xi\) is generically non-torsion over the moduli space of smooth genus \(4\) curves:

\begin{theorem}[Xue]\label{thm_xue}
Let $K$ be the function field of the generic point $\eta$ of $\sM$ and $C/K$ a curve representing $\eta$. Then the canonical quadratic point $\xi_\eta=\xi (\eta)$ of $\Jac(C)$ is non-torsion. Moreover, the group $\Jac(C)(K')$ is a free group generated by $\xi_\eta$. 
\end{theorem}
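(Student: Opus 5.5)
We prove the theorem by degeneration and specialization. The idea is to find a single (possibly degenerate) specialization $\sM \ni s$ where we can compute the Mordell--Weil group of the fibre. We then use that the generic Mordell--Weil group of $\sJ/\sM$ injects into the group over a suitable specialization, modulo torsion control.

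\textbf{Step 1 (non-torsion).} It suffices to exhibit one point $s$ (over a number field, or even over $\mathbb{C}$ along a one-parameter family) where $\xi(s)$ has infinite order. Torsion sections of an abelian scheme over a connected normal base specialize to torsion. Concretely, we would take a pencil of $(3,3)$-curves on $\BP^1\times\BP^1$ degenerating to a stable curve whose generalized Jacobian has a toric part, for instance a curve acquiring a node, or a union of components. We then compute the image of $\xi$ in the component group or torus part via the valuation of a function: $\pi_1^*(\infty)-\pi_2^*(\infty)$ restricted near the degeneration gives an explicit element of $\mathbb{G}_m$ which is visibly not a root of unity. Alternatively we would compute the Néron--Tate height at one explicit smooth curve over $\BQ$ with split quadric and check $\hat h(\xi)>0$ numerically.

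\textbf{Step 2 (rank one and generation).} This is the main obstacle. To show $\Jac(C)(K')$ has rank $1$, we would use the monodromy of $\sC\to\sM$. The image of $\pi_1(\sM)$ in $\mathrm{Sp}_8$ is Zariski dense (indeed $\sM\to\sM_4$ is étale onto an open set, and the monodromy of $\sM_4$ is full $\mathrm{Sp}_8(\ZZ)$ up to finite index). Hence $H^1$ of the generic fibre has no fixed part, so the $K'$-trace vanishes and $\Jac(C)(K')$ is finitely generated (Lang--Néron). Its rank is bounded by $\mathrm{rank}\, NS$ of the total space relations; precisely, sections correspond to divisor classes on $\sC$ modulo vertical and pullback classes. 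By a Noether--Lefschetz-type computation (the Picard group of the universal curve over a big moduli space is generated by tautological classes), $\Pic(\sC_{K'})$ is generated by $\omega$ and the two ruling classes $\pi_1^*\mathcal{O}(1),\pi_2^*\mathcal{O}(1)$, subject to $\omega=\pi_1^*\CO(1)+\pi_2^*\CO(1)$. Thus the degree-zero part is generated by $\xi_\eta$, up to torsion. For this we would invoke a Franchetta-type statement for this moduli of $(3,3)$-curves: the universal curve is an open subset of a projective bundle over $\BP^1\times\BP^1$ (quotiented by $\Aut$), whose Picard group is computable.

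\textbf{Step 3 (torsion-freeness).} Generic torsion is ruled out by monodromy. A $K'$-rational $n$-torsion point is a monodromy-invariant vector in $H^1(C,\ZZ/n)$. No such vector exists, since the monodromy surjects onto $\mathrm{Sp}_8(\ZZ/n)$ for each $n$; this follows from the level-structure covers of $\sM_4$ being irreducible and remaining so after the degree-two étale cover. Combining, $\Jac(C)(K')$ is free of rank one with $\xi_\eta$ a generator, since $\xi_\eta$ is primitive: $\pi_1^*\CO(1)$ and $\pi_2^*\CO(1)$ are part of a basis of the Picard lattice.
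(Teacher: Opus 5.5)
The paper gives no proof of this statement; it is quoted from Xue. Your argument therefore has to stand on its own. The outline is sound for non-torsion and for rank one, but the final claim that $\xi_\eta$ \emph{generates} $\Jac(C)(K')$ has a genuine gap.

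In Step~2 you identify points of $\Jac(C)(K')=\Pic^0_{C/K'}(K')$ (sections of $\sJ$) with degree-zero divisor classes on $\sC$ modulo vertical and pulled-back classes. In Step~3 you deduce generation from the primitivity of $\pi_1^*\CO(1)-\pi_2^*\CO(1)$ in the Picard lattice. That identification needs a $K'$-rational point on $C$, or a vanishing Brauer obstruction. The generic curve has no such point: by your own computation every divisor on it has degree divisible by $3$. In general one only has an exact sequence $0\to\Pic(C_{K'})\to\Pic_{C/K'}(K')\to\mathrm{Br}(K')$, and the image of the last map lies in $\ker(\mathrm{Br}(K')\to\mathrm{Br}(C))$, which is killed by the index of $C$, here $3$.

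So your steps give that $\Jac(C)(K')$ is torsion-free of rank one with $3\,\Jac(C)(K')\subseteq\ZZ\xi_\eta$. They do not exclude $\xi_\eta=3\eta$ for some $\eta\in\Jac(C)(K')$ that is not represented by a line bundle over $K'$. This is exactly the difference between the weak and the strong Franchetta statements, and primitivity in $\Pic(C_{K'})$ says nothing about it.

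The standard remedy is to force a rational point.
\begin{itemize}
\item Work over the function field $F\supseteq K'$ of the space $U$ of smooth $(3,3)$-forms, not on the quotient by $\Aut(\BP^1)^2$. There the bundles $\pi_i^*\CO(1)$ need not even exist, since $\CO(1)$ carries no $\mathrm{PGL}_2$-linearization.
\item Let $I=\{(f,p): f(p)=0\}\to U$ be the universal curve and $F_1$ its function field. Then $C_{F_1}$ has the tautological point $p$, so $\Jac(C)(F_1)=\Pic^0(C_{F_1})$.
\item Because $\CO(3,3)$ imposes two conditions on every length-two subscheme, $I\times_U I$ is an open subset of a $\BP^{13}$-bundle over $\mathrm{Bl}_\Delta(\BP^1\times\BP^1)^2$. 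Removing the discriminant only kills a multiple of the hyperplane class.
\item Your argument then gives $\Pic(C_{F_1})=\ZZ L_1\oplus\ZZ L_2\oplus\ZZ[p]$, with $L_i$ the two ruling classes. Hence $\Jac(C)(F_1)$ is free on $L_1-L_2$ and $L_1-3[p]$.
\item $\Jac(C)(F)$ is the subgroup of classes that are constant in $p$. It equals $\ZZ(L_1-L_2)=\ZZ\xi_\eta$, since $p\mapsto 3b[p]$ is non-constant for $b\neq 0$.
\item The statement over $K'$ follows from $\Jac(C)(K')\subseteq\Jac(C)(F)$.
\end{itemize}

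This gives non-torsion, torsion-freeness and generation at once, so Steps~1 and~3 become unnecessary. If you keep Step~3, justify full monodromy of the double cover $\sM$ by the perfectness of $\mathrm{Sp}_8(\ZZ)$. The image of $\pi_1(\sM)$ has index at most $2$ in $\mathrm{Sp}_8(\ZZ)$, hence is everything. Your appeal to irreducibility of level covers after the double cover is asserted rather than proved.
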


 The purpose of this paper is to extend the above non-degeneracy of $\xi$ to general algebraic points on an irreducible subvariety $\sM$. For this, we define a notion of bigness for sections of any abelian schemes over quasi-projective varieties over number fields.

 Let $S$ be a quasi-projective variety defined over a number field $K$,  $A/S$ an abelian scheme, and $L/A$ a line bundle which is relatively ample and symmetric in the sense that $[-1]^*L=L$. For a point $x \in A(\overline{K})$, we write
\[
h_\NT (x)
\]
for the associated Néron--Tate height. For each point $s\in S(\bar K)$,   we write
\[
h_{\mathrm{Fal}}(A_s)
\]
for the stable Faltings height of $A_s$.

Let $\nu: S\to A$ be a section. We write   $\nu_s=\nu(s)$. The main object of this paper is the relationship between $h_\NT(\nu_s)$ and $h_{\mathrm{Fal}}(A_s)$ in algebraic families.
 \begin{definition}[Bigness]\label{def:bigness-intro}
 	We say that   $\nu$ is big on $S$ if there are positive numbers $\epsilon, c\in \BR$, and a non-empty open subset $U\subset S$ such that for any $s\in U(\bar K)$, 
 	$$h_{\NT}(\nu_s)\geq \epsilon \hFal(A_s)-c.$$
 	 \end{definition}
	 One consequence of the bigness of $\nu$ is the height function
	 $$S(\bar K)\to \BR,\qquad s\mapsto h_{\NT}(\nu_s)$$
	 satisfies the Northcott property on the open subset $U$,  i.e.
	 for any $D, A\in \BR_{>0}$, the set 
		$$\{s\in U(\bar{K})~|~ [K(s) : K]<D, h_{\NT}(\nu_s)<A\}
		$$
    		is finite.

	This condition may be viewed as a quantitative strengthening of the generic non-torsion condition. Indeed, if $\nu$ is big on $S$, then $\nu_s$ is non-torsion for all transcendental $s\in S(\BC)$ outside a proper Zariski closed subset. Moreover, the height of $\nu_s$ grows at least linearly with the arithmetic complexity of the fiber.

\begin{remark}
	The precise normalization of $h_{\mathrm{Fal}}$ is irrelevant for the notion of bigness, since changing the normalization only modifies the constant $c$. \end{remark}

Let $S$ and $\nu$ be as above. Following the setup in the next section, we associate with $\nu$ a smallest abelian subscheme of the relative Jacobian containing the image of $\nu$ up to torsion translation.

\begin{definition}[The minimal abelian subscheme]
	\label{def:min-abelian}
	Let $A \to S$ be an abelian scheme and let $\nu : S \to A$ be a section. We define $A_{\min} \subseteq A$ to be the \emph{minimal abelian subscheme associated with $\nu$} if $A_{\min}$ is the smallest abelian subscheme of $A$ over $S$ for which there exists a section $\tau : S \to A$ of finite order such that
	$$
	\nu(S) \subseteq A_{\min} + \tau.
	$$
\end{definition}

The existence and uniqueness of $A_{\min}$ are standard in the relative setting. In the applications considered below, $A_{\min}$ will often be determined by explicit geometric decompositions of the Jacobian.

For every abelian subscheme $B \subseteq A_{\min}$, the quotient $A_{\min}/B$ is again an abelian scheme over $S$. Let
$$
q_B := \dim_S(A_{\min}/B).
$$
After choosing a suitable level structure, the quotient $A_{\min}/B$ defines a modular map
$$
\phi_B : S \longrightarrow \CS_{q_B},
$$
where $\CS_{q_B}$ denotes the Siegel moduli space of principally polarized abelian varieties of dimension $q_B$, or the appropriate finite-level cover if needed. The dimension of the image $\phi_B(S)$ measures the amount of modular variation of the quotient $A_{\min}/B$.
Let $\CA_{q_B}\to \CS_{q_B}$ be the universal family of abelian schemes. 
Now we have the composition of maps for every subscheme $B$: 
	$$\varphi_B: \quad S\overset {\nu-\tau}\lra A_{\min} \lra A_{\min}/B\lra \CA_{q_B}.$$

	Following the work of Gao \cite{Gao20}, Yuan--Zhang \cite{YZ21}, and Gao--Zhang \cite{GZ24}, the first main result of the paper gives a criterion for this bigness property in terms of modular variation of suitable abelian quotients. 
	\begin{theorem}[Criterion for bigness]
		\label{thm: criterion}
		Let \(A/S\) be an abelian scheme  over a quasi-projective variety defined over a number field, and let		\[
		\nu : S \longrightarrow A
		\]
		be the section. Let \(A_{\min}\) be the minimal abelian subscheme of \(A\) over \(S\) associated with \(\nu\). Then \(\nu\) is big on \(S\) if and only if for every abelian subscheme \(B \subseteq A_{\min}\) over $S$, one has
		\[
		\dim S \leq \dim \varphi_B(S) + \dim_S B.
		\]
	\end{theorem}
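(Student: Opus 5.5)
We prove the criterion by translating bigness into a statement about the adelic line bundle $\overline{\CL}_\nu := \nu^*\overline{L}$ on $S$ in the sense of Yuan--Zhang \cite{YZ21}. Here $\overline{L}$ is the canonical (Néron--Tate) adelic extension of $L$ on $A$, so that $h_{\overline{\CL}_\nu}(s)=h_\NT(\nu_s)$ for $s\in S(\bar K)$. Similarly, the Hodge bundle with its Faltings metric gives an adelic line bundle $\overline{\CM}$ on $S$ with $h_{\overline{\CM}}=\hFal(A_s)+O(1)$.

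With this dictionary, Definition~\ref{def:bigness-intro} says that $\overline{\CL}_\nu-\epsilon\overline{\CM}$ has height bounded below on a dense open set. By the height inequality for big adelic line bundles (Yuan--Zhang's adelic Siu inequality combined with the Silverman--Tate / Gao--Habegger type comparison used in \cite{GZ24}), this holds if and only if the generic fiber $\widetilde{\CL}_\nu$ is big as a geometric adelic line bundle. Concretely, this means its top self-intersection $\widetilde{\CL}_\nu^{\dim S}>0$. So the first step is to reduce Theorem~\ref{thm: criterion} to
\[
\text{$\nu$ big} \iff \widetilde{\CL}_\nu^{\,\dim S}>0 .
\]
The forward implication comes from Northcott and the Faltings-height bound, using the fact that $\overline{\CM}$ is big modulo the isotrivial part. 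The converse comes from Siu's inequality $\overline{\CL}_\nu\ge \epsilon\overline{\CM}$ up to effective terms.

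The second step computes the volume over $\BC$. The curvature form of $\widetilde{\CL}_\nu$ at $s$ is $\nu^*\omega$, where $\omega$ is the canonical semipositive $(1,1)$-form on $A(\BC)$. In Betti coordinates, $\omega$ is the pullback of the flat form on the real torus fibers. Hence $\widetilde{\CL}_\nu^{\dim S}>0$ if and only if $(\nu^*\omega)^{\wedge\dim S}\neq 0$ somewhere. By the theory of Betti maps \cite{Gao20}, this is equivalent to the Betti map $b_\nu:\widetilde{U}\to \BR^{2g}$ composed with $\nu$ having generic real rank $2\dim S$ on a suitable neighborhood. Replacing $A$ by $A_{\min}$ and $\nu$ by $\nu-\tau$ does not change Néron--Tate heights, and Faltings heights change only by bounded amounts since $A_{\min}$ is an isogeny factor up to a complement. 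So we may assume $\nu(S)$ is not contained in any proper torsion-translated abelian subscheme.

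The third step, which is the main obstacle, is Gao's rank criterion \cite{Gao20}, in the form established in \cite{GZ24}. For the subvariety $X=\nu(S)\subseteq A_{\min}$, this criterion says that $\mathrm{rank}_\BR\, db_\nu = 2\dim S$ generically if and only if for every abelian subscheme $B\subseteq A_{\min}$ one has $\dim (X/B)\ \geq\ \dim X-\dim_S B$ in the universal quotient. In other words, the images $\varphi_B(S)\subseteq\CA_{q_B}$ must satisfy $\dim\varphi_B(S)\ge \dim S-\dim_S B$, which is exactly the inequality of Theorem~\ref{thm: criterion}. The nondegeneracy of $X$, namely that it is not contained in a torsion translate of a proper abelian subscheme, is precisely the hypothesis making this criterion applicable. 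The delicate point is matching Gao's formulation, which uses the mixed Shimura variety and quotients of the universal family $\CA_g$ by abelian subschemes, with the maps $\varphi_B$ defined here via level structures. In particular, one must check that passing to finite étale covers of $S$ and to the level-structure cover does not alter dimensions or bigness. This is harmless because heights are invariant under such covers and bigness descends along finite maps. Combining the three steps gives both directions of the equivalence.
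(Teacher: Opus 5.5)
Your three steps follow the paper's proof. You use Yuan--Zhang's characterization of bigness of $\widetilde{\mathcal L}_\nu$ (Theorem~\ref{thm-YZ}). You then use positivity of $\int_{S(\BC)}c_1(\bar L)^{\dim S}$: the paper gets this from the volume formula of Theorem~\ref{lem1}, and your top self-intersection is the same number since $\nu^*\bar P$ is nef. Next comes the equivalence with Betti rank $2\dim S$ (Lemma~\ref{lem3}), and finally Gao's criterion (Theorem~\ref{thm-Gao}).

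\textbf{Gap in Step 1.} The genuine gap is the extra bridge you build to the Faltings-height formulation of Definition~\ref{def:bigness-intro}. The implication ``$h_{\NT}(\nu_s)\ge\epsilon\,\hFal(A_s)-c$ on a dense open set $\Rightarrow$ $\widetilde{\mathcal L}_\nu$ big'' is false. Neither Northcott nor ``$\overline{\mathcal M}$ is big modulo the isotrivial part'' can supply it. When $S\to\CA_g$ has positive-dimensional fibres, the Hodge bundle is not big on $S$ and $\hFal$ is constant along those fibres, so a lower bound by $\epsilon\,\hFal$ carries no information there.

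Concretely, let $A=E\times S$ with $\dim S\ge 1$ and $\nu\equiv P$ for a non-torsion $P\in E(K)$. Then $h_{\NT}(\nu_s)=\hat h(P)$ and $\hFal(A_s)=\hFal(E)$ are constant, so Definition~\ref{def:bigness-intro} holds for suitable $c$. Yet $\nu^*\bar P$ is pulled back from a point, so $\widetilde{\mathcal L}_\nu^{\dim S}=0$. Moreover, for $B=0$ (here $A_{\min}=A$) the criterion demands $\dim S\le 0$.

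Removing isotrivial parts does not help. The same failure occurs for $\nu=\sigma\times\mathrm{id}_Z$ on $\mathcal E\times Z\to Y\times Z$, where $\sigma$ is a non-torsion section of a non-isotrivial elliptic surface $\mathcal E\to Y$ and $\dim Z\ge 1$. So with Definition~\ref{def:bigness-intro} taken literally, the ``only if'' direction of the statement fails, and no argument can give your forward implication.

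\textbf{What the paper does instead.} The paper's proof sidesteps this by identifying bigness with the Yuan--Zhang notion in Theorem~\ref{thm-YZ}. That notion asks for a lower bound $h_{\bar L}\ge\epsilon h_{\bar M}-c$ on a dense open set, for every (in particular an ample) adelic line bundle $\bar M$ on $S$. For that notion, \cite[Theorems 5.3.7, 5.3.8]{YZ21} give $\nu$ big $\iff$ $\widetilde{\mathcal L}_\nu$ big directly, with no Hodge bundle involved. You should replace Step 1 by this. The Faltings-height inequality is then a consequence, since $\hFal(A_s)\le C\,h_{\bar M}(s)+C'$ for ample $\bar M$, but it is not an equivalent condition.

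\textbf{The reduction to $A_{\min}$.} Your claim that replacing $A$ by $A_{\min}$ changes Faltings heights only by a bounded amount is also false. If a complement of $A_{\min}$ varies in moduli, the difference is essentially the Faltings height of that complement, which is unbounded. Fortunately the claim is not needed. In the Yuan--Zhang formulation the comparison bundle $\bar M$ lives on $S$, and $\hat h(\nu_s-\tau_s)=\hat h(\nu_s)$ because $\tau_s$ is torsion. The Betti rank of $\nu-\tau$ in $A_{\min}$ also equals that of $\nu$ in $A$. So $A_{\min}$ enters only to put $(\nu-\tau)(S)$ in the non-degenerate situation required by Gao's theorem, which is how you use it in Step 3. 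With these two corrections, your argument coincides with the paper's.
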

	
	For the proof, on the arithmetic side, we construct adelic line bundles whose associated height functions control the Néron--Tate height of \(\nu\). On the transcendental side, we use a generic non-degeneracy statement for Betti maps to convert modular variation into positivity.
	We then apply Theorem~\ref{thm: criterion} to two explicit families of genus \(4\) curves.
	
	The first family $\sC_\sN\to \sN\subset \BP^3$ is the locus of genus \(4\) curves admitting triple involutions:
	$$a(1+x^3y^3)+b(xy+x^2y^2)+c (x^2+xy^3)+d (y^2+x^3y)=0.$$
	The Jacobian admits an explicit decomposition, up to isogeny, into elliptic factors arising from quotient constructions. The canonical quadratic point has nontrivial projections to these factors, and this allows one to verify the dimension inequalities appearing in Theorem~\ref{thm: criterion}.
	
	\begin{theorem}[Triple involution locus]
		\label{thm:intro-triple}
		The canonical quadratic point \(\xi\) is big on  $\sN$. Moreover, there is an isogeny 
		$$\psi: \sJ_\sN\to \sE_0\times \sE_1\times \sE_2\times\sE_3$$
		into the product of four elliptic curves not isogenous to each other, such that $\xi$ has a non-zero projection on each of them.
	\end{theorem}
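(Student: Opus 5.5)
The plan is to first construct the isogeny $\psi$ explicitly and then feed the resulting decomposition into Theorem~\ref{thm: criterion}.

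\textbf{The isogeny.} The equation
$$a(1+x^3y^3)+b(xy+x^2y^2)+c(x^2+xy^3)+d(y^2+x^3y)=0$$
is invariant under three commuting involutions of $\BP^1\times\BP^1$. Reading off the monomials, these are
$$\sigma_1:(x,y)\mapsto(1/x,1/y)\ \text{(up to the appropriate twist)},\qquad \sigma_2:(x,y)\mapsto(y,x)\ \text{composed with an inversion},$$
and their product, possibly together with a swap of the two rulings. Together with the identity they form a Klein four-group $G=\{1,\sigma_1,\sigma_2,\sigma_3\}$ acting on each fiber $C$.

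By Riemann--Hurwitz, each quotient $E_i=C/\langle\sigma_i\rangle$ should have genus $1$, and $C/G$ has genus $1$ as well, giving a fourth elliptic curve $E_0$. The standard Kani--Rosen decomposition for a $(\BZ/2)^2$-action then gives
$$\Jac(C)\times \Jac(C/G)^2\sim \Jac(C/\sigma_1)\times\Jac(C/\sigma_2)\times\Jac(C/\sigma_3).$$
This is not yet enough, since it only accounts for three factors. I therefore expect an extra symmetry, for instance the order-$3$ or ruling-swapping automorphism $(x,y)\mapsto(y,x)$ combined with the others, which enlarges the group so that $\Jac(C)\sim E_0\times E_1\times E_2\times E_3$. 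The genus-$1$ quotients should be computed explicitly as plane cubics in invariants such as $u=x+1/x$. Globally over $\sN$ this gives abelian schemes $\sE_i$, and the sum of the pullback maps gives $\psi$.

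\textbf{Non-isogeny and modular variation.} Next I will compute the $j$-invariants $j_i(a:b:c:d)$ of the $\sE_i$ as rational functions on $\sN\cong$ (an open in) $\BP^3$ modulo scaling. I then check two things: that the $j_i$ are pairwise distinct as functions, which gives non-isogeny of the generic fibers since a non-constant family whose fibers are all isogenous would force equal $j$ up to Hecke correspondences (these can be excluded by comparing degrees or bad-reduction loci); and that the map $(j_0,\dots,j_3):\sN\to \BA^4$ has image of dimension $3=\dim\sN$ (a Jacobian-rank computation at one point). In fact I only need each $j_i$ non-constant plus the rank statement below.

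\textbf{Projections of $\xi$.} The point $\xi$ is the difference of the two ruling fibers. The involutions either preserve or swap the two rulings. For $\sigma_i$ preserving the rulings, the pushforward $\pi_{i*}\xi$ is the difference of fibers on $E_i$, which is a degree-zero divisor. To show it is non-zero I specialize to one explicit rational point $(a:b:c:d)$, compute the image point on the Weierstrass model, and check it is non-torsion (via Nagell--Lutz, or reduction modulo two primes). For the factors where the ruling is swapped, $\xi$ is anti-invariant, and its image lands in the complementary factor. Non-torsion at a specialization implies generically non-torsion, so the projection of $\xi$ to each $\sE_i$ is non-zero.

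\textbf{Applying the criterion.} Since the $\sE_i$ are pairwise non-isogenous and $\xi$ projects non-trivially (and non-torsionly) to each, we get $A_{\min}=\sJ_\sN$ up to isogeny. Moreover, the abelian subschemes $B$ are exactly the products of subsets of factors, because there is no isogeny between distinct factors and, the $\sE_i$ being non-isotrivial, no extra endomorphisms. For $B=\prod_{i\in I}\sE_i$, the quotient is $\prod_{i\notin I}\sE_i$, and $\varphi_B$ records the $j_i$ for $i\notin I$ together with the points $\xi_i$ in the fibers. Hence
$$\dim\varphi_B(\sN)\ \ge\ \min\bigl(3,\ 2\,(4-|I|)\bigr)$$
once I verify that the joint map $s\mapsto (j_i(s),\xi_i(s))_{i\notin I}$ has maximal rank, so the inequality $3\le\dim\varphi_B+|I|$ holds in every case.

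The main obstacle is this maximal-rank verification for small complements: $|I|=3$ needs $\dim\varphi_B=2$ for a single factor, i.e.\ the point $\xi_i$ must genuinely move in the non-isotrivial family $\sE_i$, while $|I|\le 2$ needs rank $3$. I will check these by computing Jacobian determinants of the explicit maps at a random rational point, writing the fiber coordinate of $\xi_i$ on a Legendre-type model. The other substantial task is identifying the correct four-factor decomposition, i.e.\ the extra symmetry.
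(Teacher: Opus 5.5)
\textbf{The decomposition step has a genuine gap.} This step carries the whole statement, it does not work as you set it up, and you leave it open.

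Of the symmetries you read off, only $(x,y)\mapsto(1/x,1/y)$ is correct. The swap $(x,y)\mapsto(y,x)$, with or without inversion, interchanges the coefficients $c$ and $d$, so it is not an automorphism of the generic fibre. You also miss $(x,y)\mapsto(-x,-y)$. The actual group is $\Sigma=\{1,\sigma_1,\sigma_2,\sigma_3\}$ with $\sigma_1(x,y)=(-x,-y)$, $\sigma_2(x,y)=(1/x,1/y)$ and $\sigma_3=\sigma_1\sigma_2$, all preserving both rulings.

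Each $\sigma_i$ has exactly two fixed points on $C$, e.g.\ $(0,\infty),(\infty,0)$ for $\sigma_1$ and $(1,-1),(-1,1)$ for $\sigma_2$. Hence $X_i=C/\sigma_i$ has genus $2$, not $1$. With your genera, the Kani--Rosen relation already fails on dimensions, since $4+2\cdot 1\neq 1+1+1$.

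Your proposed repair, an extra automorphism of $C$, is not available. Any group strictly containing $\Sigma$ has order at least $8$, and Riemann--Hurwitz then bounds the corresponding family of genus-$4$ curves to dimension at most $2$. But $\sN$ is $3$-dimensional and maps quasi-finitely to moduli, so its generic fibre has automorphism group exactly $\Sigma$.

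The missing idea is that the further splitting happens on the genus-$2$ quotients, via an involution that does not lift to $C$. Each $X_i$ carries the involution $\tau_i$ induced by $\sigma_j$ ($j\neq i$) together with its hyperelliptic involution $\iota_i$. Hence $\Jac(X_i)\sim E_0\times E_i$, with $E_0=X_i/\tau_i=C/\Sigma$ and $E_i=X_i/\langle\iota_i\tau_i\rangle$. Inserting this into $\Jac(C)\times E_0^2\sim\prod_i\Jac(X_i)$ gives $\Jac(C)\sim E_0\times E_1\times E_2\times E_3$. Until this is in place, your $j$-invariant and non-torsion computations have no well-defined targets. Your remarks about ruling-swapping factors and anti-invariance of $\xi$ concern a group that does not act.

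\textbf{The rest of your plan.} Granting the decomposition, the remainder is workable, but the obstacle you single out is not one. With $\dim_\sN B=|I|$, the criterion asks for $3\le\dim\varphi_B(\sN)+|I|$, which is automatic for $|I|\ge 3$. In general it suffices that
$\dim\varphi_B(\sN)\ge\dim J_I(\sN)\ge\dim J(\sN)-|I|=3-|I|$.
Here $J_I$ records the $j$-invariants of the factors not in $I$, and $J=(j_0,\dots,j_3):\sN\to\BA^4$ is finite. The paper proves finiteness of $J$ via Torelli and the finiteness of the automorphisms of $\BP^1\times\BP^1$ commuting with $\Sigma$; your one-point Jacobian-rank computation would serve as well. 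So you need not show that the points $\xi_i$ move in the fibres, and your bound $\min(3,2(4-|I|))$ asks for more than the criterion requires.

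For non-isogeny, the paper plays the $S_3\cong\wt\Sigma/\Sigma$ symmetry permuting $j_1,j_2,j_3$ against a modular-polynomial relation cutting out the irreducible hypersurface $J(\sN)$. Your specialization route also works once made precise: isogenies specialize, so differing Frobenius traces for one specialization suffice. The same $S_3$ symmetry reduces the non-torsion checks to $E_0$ and $E_1$, which is exactly what the paper's two numerical examples do.
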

	
	Our second application concerns families containing a special CM curve with $\zeta_{15}$-action. In this setting, the simplicity of the Jacobian of the CM fiber, together with the existence of another fiber for which \(\xi\) is non-torsion, implies that the relevant abelian subscheme is as large as possible. This makes it possible to apply the criterion above in a particularly clean way.
	
	\begin{theorem}[A CM family]
		\label{thm:intro-cm}
		Let $S$ be a family of smooth genus 4 curves containing a special CM curve $C_0: y^5=x^3-1$ and another curve $C_1$ in $\sN$ such that $\xi_{C_1}$ is non-torsion. If  $\dim S \leq 4$, then $\xi$ is big on $S$.
	\end{theorem}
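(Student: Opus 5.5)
We will deduce the statement from Theorem~\ref{thm: criterion} applied to the restriction $\sJ_S\to S$ of the relative Jacobian and the section $\xi|_S$. The work splits into two parts. First we show that $A_{\min}$ is the whole Jacobian $\sJ_S$. Then we check the dimension inequality $\dim S\le \dim\varphi_B(S)+\dim_S B$ for every abelian subscheme $B\subseteq \sJ_S$. We may assume $S$ is irreducible, and after a finite étale base change we may assume $\sJ_S$ carries a level structure, so that all abelian subschemes and torsion sections are defined over $S$.

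\emph{Step 1: $A_{\min}=\sJ_S$.} Let $A_{\min}\subseteq \sJ_S$ be the minimal abelian subscheme, and $\tau$ a torsion section with $\xi(S)\subseteq A_{\min}+\tau$. Specialize to the CM fiber $C_0: y^5=x^3-1$. Its Jacobian $J_0$ has CM by $\BQ(\zeta_{15})$, which has degree $8=2g$. The CM type is primitive, as one checks from the action of $\zeta_{15}$ on $H^0(C_0,\Omega^1)$, which is spanned by the differentials $x^ay^b\,dx/y^4$. Hence $J_0$ is simple. The fiber $(A_{\min})_{s_0}$ is an abelian subvariety of $J_0$, so it is either $0$ or $J_0$. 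By flatness the relative dimension of $A_{\min}$ is constant, so in the first case $A_{\min}=0$. Then $\xi=\tau$ is torsion on all of $S$, and in particular $\xi_{C_1}$ is torsion, contradicting the hypothesis on $C_1$. Therefore $A_{\min}=\sJ_S$, and we may take $\tau=0$.

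\emph{Step 2: the dimension inequality.} Let $B\subseteq \sJ_S$ be an abelian subscheme. By the same simplicity argument at $s_0$, $B$ is either $0$ or $\sJ_S$. If $B=\sJ_S$, then $\dim_S B=4\ge \dim S$, and the inequality holds by the hypothesis $\dim S\le 4$. If $B=0$, the map $\varphi_0: S\to \CA_4$ is the composition of $\xi$ with the modular map. Since $\varphi_0$ lifts $\phi_0:S\to \CS_4$, we have $\dim\varphi_0(S)\ge\dim\phi_0(S)$. The Torelli map is quasi-finite, and $S$ is a family of genus $4$ curves which we take to be non-isotrivial, i.e. finite over its image in moduli. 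So $\phi_0$ is generically finite and $\dim\varphi_0(S)\ge \dim S$. With both cases checked, Theorem~\ref{thm: criterion} gives that $\xi$ is big on $S$.

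\emph{Main obstacle.} The delicate point is the simplicity of $J_0$, which is used for both $A_{\min}$ and $B$. We plan to verify it by computing the CM type explicitly, namely the characters of $\mu_{15}$ acting on the basis $dx/y^4,\ dx/y^3,\ x\,dx/y^4,\ dx/y^2$. We then check that no proper subfield of $\BQ(\zeta_{15})$ induces this type, which is Shimura's primitivity criterion. A secondary point is that abelian subschemes need not be defined over $S$ without base change. This is handled by the finite base change at the start, which does not affect bigness because heights are compatible under finite maps. We also need the implicit assumption that $S$ is a genuine (non-isotrivial) family in moduli.
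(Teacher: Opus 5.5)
Your proposal is correct and follows essentially the paper's own argument. Simplicity of $\Jac(C_0)$ (via the primitive CM type) forces every abelian subscheme of $\sJ_S$ to be $0$ or $\sJ_S$, so $A_{\min}=\sJ_S$ and Theorem~\ref{thm: criterion} reduces to the two easy cases $B=0$ and $B=\sJ_S$; you are also more explicit than the paper that the non-torsion of $\xi_{C_1}$ is what excludes $A_{\min}=0$, and that the case $B=0$ needs $S$ to be generically finite over moduli.
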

	
	These bigness results have direct arithmetic consequences. Since bigness gives a lower bound for the Néron--Tate height of $\xi_C$ in terms of the Faltings height of $C$, one obtains finiteness statements of Northcott type on suitable Zariski open subsets. Moreover, one deduces non-torsion of $\xi_C$ for curves corresponding to transcendental points of the parameter space.
	
	\begin{corollary}
		\label{cor:intro-consequences}
		On each of the families in Theorems~\ref{thm:intro-triple} and~\ref{thm:intro-cm}, there exists a Zariski open subset $U$ such that for any $D, A\in \BR$, the set 
		$$\{[C]\in U(\bar{\BQ})~|~ \deg C<D, \hFal(C)<A\}
		$$
		is finite.
	\end{corollary}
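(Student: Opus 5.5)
Corollary~\ref{cor:intro-consequences} follows by combining the bigness from Theorems~\ref{thm:intro-triple} and~\ref{thm:intro-cm} with the Northcott property for the stable Faltings height on the moduli of curves. Let $S$ be one of the two families, with its section $\xi$ into the relative Jacobian $\sJ_S$. After passing to a finite étale cover trivializing the ruling choice, we may assume $\xi$ is a section over $S$; finite covers change degrees only by a bounded factor. Theorems~\ref{thm:intro-triple} and~\ref{thm:intro-cm} give $\epsilon,c>0$ and a Zariski open $U\subset S$ such that
\[
h_{\NT}(\xi_s)\ \ge\ \epsilon\,\hFal(\sJ_s)-c \qquad (s\in U(\bar{\BQ})),
\]
where $\hFal(C)$ means $\hFal(\Jac C)$. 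The plan is to show that a bound on $\hFal(C)$ together with a bound on $[\BQ(s):\BQ]$ leaves only finitely many $s\in U$.

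\textbf{Step 1: Faltings height is a Weil height.} Consider the Torelli map $t:\sM_4\to\CS_4$ composed with $S\to\sM\to\sM_4$. The Faltings height on $\CS_4$ (at some level) differs from a Weil height attached to the Hodge bundle $\omega$ by $O(\log(2+h_\omega))$; this is Faltings' comparison. Moreover $\omega$ extends to an ample line bundle on the Baily--Borel compactification. Hence bounded $\hFal$ and bounded degree give finitely many points in the image of $S(\bar{\BQ})$ in $\CS_4$.

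\textbf{Step 2: finiteness of fibers.} I need the map $S\to\CS_4$ to be quasi-finite on $U$. Torelli is injective on isomorphism classes of curves, and $\sM\to\sM_{4,\BQ}$ is étale. So I would shrink $U$ so that $S\to\sM_4$ is quasi-finite there. This holds if $S$ is a subvariety of $\sM$, as is the case for the triple-involution locus $\sN$ and for the CM family. Each point then has finitely many preimages of bounded degree.

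\textbf{Step 3: the set in the statement.} If the set in the statement is meant with $h_{\NT}(\xi)$ (the Northcott property deduced from bigness), the displayed inequality converts a bound $h_{\NT}(\xi_s)<A$ into $\hFal<(A+c)/\epsilon$, and Steps 1–2 finish. As literally written, with $\hFal(C)<A$, bigness is not even needed: Steps 1–2 already give finiteness on $U$.

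The main obstacle is Step 1. Matching the stable Faltings height with a Weil height for an ample bundle on a compactification of $\CS_4$ requires Faltings' estimate, handled at level $\ge3$ and descended. I would invoke Faltings' theorem directly rather than reprove it.
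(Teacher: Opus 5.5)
Your proposal is correct and is, in substance, the paper's argument. The paper obtains the corollary from Proposition~\ref{prop:northcott-general}, whose finiteness rests on exactly the Northcott property of the Faltings height for moduli points (Faltings plus Torelli) that you spell out; as you note, the bigness inequality contributes nothing there, and the paper's claim that it bounds $\hat h(\xi_C)$ from above has the inequality backwards. One input that both you and the paper leave implicit is that $S\to\sM_4$ is generically quasi-finite in the CM case. There $S$ cannot literally lie in $\sM$, since the canonical quadric $\omega_2\omega_3=\omega_4^2$ of $C_0$ is a cone, but quasi-finiteness is forced by bigness in the Yuan--Zhang sense: $h_{\NT}(\xi_s)$ is constant on fibers of $S\to\sM_4$, while positive-dimensional fibers carry infinitely many points of bounded degree.
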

	
	The main theme of this paper is thus that the arithmetic of the canonical quadratic point is governed by the geometry of the ambient family and the variation of its abelian quotients. In particular, bigness provides a bridge between transcendental non-degeneracy, positivity in adelic Arakelov theory, and arithmetic finiteness questions for genus 4 curves.
	
	We conclude the introduction with a brief outline of the paper. In Section~\ref{sec:preliminaries}, we summarize the results in adelic line bundles by Yuan and Zhang \cite{YZ21} and Betti maps by Gao \cite{Gao20} needed in the proof of the fundamental criterion.   In Section~\ref{sec:criterion}, we prove the criterion for bigness. In Section~\ref{sec:geometry}, we study the geometry of a family of curves of genus 4 with triple involutions.
	Section~\ref{sec:triple} is devoted to the arithmetic of the family of genus \(4\) curves with triple involutions. In Section~\ref{sec:cm}, we study a CM family containing a curve with \(\zeta_{15}\)-action. Finally, in Section~\ref{sec:consequences}, we derive arithmetic consequences.

	\subsection*{Acknowledgments}
This work forms part of
the author's Ph.D. thesis.
	The author would like to express her sincere gratitude to her advisor
	Shou-Wu Zhang for his constant guidance, encouragement, and support
	throughout the development of this work. The author is also grateful to
	Ziyang Gao, Xinyi Yuan, and Hang Xue for their valuable comments and
	suggestions on earlier drafts of the manuscript.

	\addtocontents{toc}{\protect\setcounter{tocdepth}{0}}
	\section*{\textbf{Notations and conventions}}

	We conclude with a few conventions used throughout the paper.
	
	\begin{itemize}
		\item Unless specified, all varieties are defined over $\bar\BQ$.
		\item By a family of curves, we always mean a smooth proper morphism whose geometric fibers are smooth non-hyperelliptic curves. 
		\item The symbol $\sM$ is reserved for the moduli space of smooth genus 4 curves with smooth quadrics; the symbols $\sJ,\sC$ are reserved for the relative Jacobian and the universal curve of $\sM$.
		\item The notation \(h_{\mathrm{Fal}}(C)\) always refers to the stable Faltings height of \(\Jac(C)\).
		\item The notation \(\hat h(\nu(s))\) and \( h_{\NT}(\nu(s))\) refers to the Néron-Tate height on the Jacobian of the fiber \(\mathcal{C}_s\) with respect to its canonical principal polarization.
		
	\end{itemize}

	\addtocontents{toc}{\protect\setcounter{tocdepth}{1}}

	\section{Adelic line bundles and Betti maps}\label{sec:preliminaries}
	
	 We summarize the adelic line bundles and Betti maps needed in the proof of the fundamental criterion.

	\subsection{Adelic line bundles}
	
	The proof of the criterion for bigness relies on adelic line bundles on quasi-projective varieties and on the associated height functions. We only record the formal properties needed later and refer to the literature for the full theory.
	
	Let $S$ be a quasi-projective variety over a number field. An adelic line bundle on $S$ is a line bundle equipped with a compatible collection of metrics at all places. To such an object one can associate a height function on $S(\overline{K})$, well defined up to bounded functions. We will use the theory developed by Yuan and Zhang.
	
	Let $X\to S$ be a projective flat morphism of integral schemes. Let $f: X\to X$ be a morphism over $S$ and $P$ a $\BQ$-line bundle on $X$, relatively ample over $S$, such that $f^*P\cong qP$ for $q\in \BQ_{>1}$. We call $(X, f, P)$  a polarized algebraic dynamical system. One obtains adelic line bundles on $X$ whose restrictions to each fiber recover the canonical height associated to the principal polarization. Then we consider the  case that $X/S$ is an abelian scheme and $P/X$ is a relatively ample and symmetric line bundle.
	The precise construction and the required positivity statement will be given in Section~\ref{sec:criterion}. For the general theory of adelic line bundles and arithmetic positivity, we refer to \cite[2.5 Adelic line bundles, 6 Algebraic Dynamics]{YZ21}.

Now let $\nu: S\to X$ be a section. Then we have an adelic line bundle $\bar L:=\nu^* \bar P$ on $S$ and the following equality holds:
$$\hat h(\nu(x))=h_{\bar L}(x), \qquad x\in S(\bar K).$$
	Let $\wt L$ be the restriction of the image of $\bar{L}$ in $\widehat{\Pic}(S/\bar{K})$.
 	Applying \cite[Theorem 5.3.7(3), Theorem 5.3.8]{YZ21} we  conclude the following:
\begin{thm}[Yuan--Zhang \cite{YZ21}] \label{thm-YZ}The $\wt L$ is big if and only if  $\nu$ is big  on $S$, i.e. for any adelic line bundle $\bar M$ on $S$, there exist $c>0, \epsilon>0$ and a non-empty open subvariety $U$ of $S$ such that 
	$$h_{\bar{L}}(x)\geq \epsilon h_{\bar M}(x)-c,\qquad x\in U(\bar{\BQ}).$$
\end{thm}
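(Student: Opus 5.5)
The plan is to reduce Theorem~\ref{thm-YZ} to two facts from \cite{YZ21}: the definition of bigness of $\wt L$ in $\widehat{\Pic}(S/\bar K)$ (Theorem 5.3.7(3)), and the height inequality for big adelic line bundles (Theorem 5.3.8). Both directions go through the comparison between $h_{\bar L}$ and the height of a fixed ample adelic line bundle on a projective model. Bigness of $\nu$ in Definition~\ref{def:bigness-intro} is then identified with bigness of $\wt L$ by comparing $\hFal(A_s)$ with such an ample height.

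\textbf{Step 1: the section pulls back the canonical height.} Let $\bar P$ be the Néron--Tate adelic line bundle of the dynamical system $(A,[2],P)$ from \cite[\S6]{YZ21}, so that $h_{\bar P}$ restricts to $\hat h$ on each fiber. Functoriality of heights under pullback gives $h_{\nu^*\bar P}=h_{\bar P}\circ\nu$, hence $h_{\bar L}(x)=\hat h(\nu(x))$ as already recorded. Passing to $\widehat{\Pic}(S/\bar K)$ only changes heights by bounded amounts on the relevant points, so the inequalities below may be stated for $\wt L$.

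\textbf{Step 2: ``$\wt L$ big $\Rightarrow$ inequality''.} Suppose $\wt L$ is big. Given any adelic $\bar M$, Theorem 5.3.7(3) of \cite{YZ21} lets us write $N\wt L=\wt M'+\wt E$ for some $N\ge 1$, with $\wt M'$ dominating a positive multiple of $\wt M$ and $\wt E$ effective (after possibly rescaling $\bar M$ and adding a constant). Off the support of $\wt E$ we have $h_{\wt E}\ge -c$, which gives $N h_{\bar L}\ge \epsilon' h_{\bar M}-c'$ on $U=S\setminus|E|$. Dividing by $N$ yields the inequality.

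\textbf{Step 3: the converse.} Take $\bar M$ ample and apply the hypothesis to $\bar M$. The inequality $h_{\bar L}\ge \epsilon h_{\bar M}-c$ on a dense open $U$ implies that $\wt L-\epsilon\wt M$ has heights bounded below on $U$. Theorem 5.3.8 of \cite{YZ21} (or its proof, via the arithmetic Hilbert--Samuel/volume characterization) converts this into $\widehat{\vol}(\wt L)>0$, i.e.\ bigness.

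\textbf{Step 4: link to $\hFal$.} Bigness of $\nu$ in Definition~\ref{def:bigness-intro} uses $\hFal(A_s)$ rather than an arbitrary $\bar M$. Via the moduli map to a Siegel space (or Baily--Borel), $\hFal(A_s)$ is, up to $O(\log)$ terms, the height of the Hodge bundle adelic line bundle on $S$. This bundle is dominated by some ample $\bar M$. Conversely, the condition for all $\bar M$ implies the case $\bar M=$ Hodge bundle. For the reverse implication one must use that the Hodge bundle is big when the modular map is generically finite. Otherwise the correct reading is the statement as phrased, namely bigness against every $\bar M$.

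\textbf{Main obstacle.} The main difficulty is Step 3, the converse direction: deducing positivity of the arithmetic volume from a pointwise height inequality. I would first try to invoke \cite[Theorem 5.3.8]{YZ21} directly. If that citation does not cover it, I would argue through the fiberwise positivity criterion, verifying that the lower bound forces the geometric line bundle to be big and the arithmetic degree to be positive.
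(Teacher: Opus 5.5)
Your proposal follows the paper's route: the paper's entire proof is the citation of \cite[Theorem 5.3.7(3), Theorem 5.3.8]{YZ21}, which is what your Steps 2--3 rest on. Your Step 4 correctly flags a point the paper glosses over: the ``i.e.'' replaces the $\hFal$-based Definition~\ref{def:bigness-intro} by bigness against every $\bar M$, and the two agree only when the modular map of $A/S$ is generically finite (for a constant section of a constant family over a positive-dimensional $S$, Definition~\ref{def:bigness-intro} holds trivially while $\wt L$ is not big). One correction to your fallback for Step 3: $\wt L$ is a geometric class, so the converse amounts to proving $\wt L^{\dim S}>0$, not positivity of an arithmetic degree or an arithmetic Hilbert--Samuel estimate, neither of which is forced by a height inequality that allows an additive constant; one way to get it is to use that $\bar L=\nu^*\bar P$ is nef, and that if $\wt L^{\dim S}=0$ there are curves through general points on which $\wt L-\epsilon\wt M$ has negative degree, giving points of $U$ with $h_{\bar L}-\epsilon h_{\bar M}\to-\infty$.
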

	
	\subsection{Betti maps and modular variation}\label{betti}
The second input in the proof of the main criterion is transcendental. Over the complex numbers, an abelian scheme gives rise locally to a variation of Hodge structure, and sections of the abelian scheme determine multi-valued real-analytic maps in Betti coordinates. These are usually referred to as \emph{Betti maps}.

For the families considered in this paper, the relevant fact is that sufficient variation in moduli forces generic non-degeneracy of the associated Betti map. We use this in the form established by Gao \cite{Gao20}. Combined with the adelic positivity formalism, this yields the bridge between modular variation and lower bounds for Néron--Tate heights.

Let $S$ be a complex quasi-projective variety and $\pi: A\to S$ an abelian scheme. Then  we have local systems $\BV_\BZ$ and $\BV_\BR$ 
whose fibers are   $H_1(A_s, \BZ)$ and $H_1(A_s, \BR)$ ($s\in S(\BC)$). As real manifolds, the fibers $A_s(\BC)$ form a local system, and one has a natural identification:
	$$A(\BC)\iso \BV_\BR/\BV_\BZ.$$
The local horizontal sections form a foliation $\CF_\betti$ on $A$. More concretely, let $U\subset S$ be an open subset so that $\BV_\BZ|U$ is trivialized:
\begin{itemize}
	\item The lattice $\BV_\BZ|_U\cong \BZ^{2g}\times U$.
	\item Under this trivialisation 
	$$A_U(\BC)\cong (\BR^{2g}/\BZ^{2g})\times U.$$
	\item Projection to the torus factor: we have a map 
	$$b_U: A_U(\BC)\to \BR^{2g}/\BZ^{2g}$$
	which is real-analytic, and its fibers are copies of $U$ sitting horizontally in the total space. 
	These fibers are leaves of the foliation $\CF_\betti$. \end{itemize}

The section $\nu$ induces a map of tangent spaces: 
$$d\nu: TS\to \nu^*TA.$$
At a point $x\in A(\BC)$ lying over $s=\pi(x)\in S$ one has the direct sum decomposition
$$T_xA(\BC)=T_x \CF_\betti\oplus T_xA_s(\BC).$$

\begin{definition}[Betti map]
	The Betti map is the composition of the second  projection with the map $d\nu$:
	$$\nu_{\betti,s}: T_sS\stackrel{d\nu}{\to} T_{\nu(s)}A_s(\BC)\to T_{\nu(s)}A(\BC)_s,\qquad s\in S$$
\end{definition}

\begin{definition}[Betti rank]
	Let $S$ be a  subvariety of $\sM$ and $s\in S$. We call 
	$$\max_{s\in S(\BC)}\dim_{\nu_{\betti, s}}(T_sS)$$
	the Betti rank of $\nu$ and denote it by $r(\nu)$.
\end{definition}

By definition, the inequality $r(\nu)\geq 2\dim S$ holds if and only if $r(\nu)=2\dim S$.  Applying \cite[Theorem 1.1]{Gao20} with $l=\dim S$, the contrapositive of Gao's theorem gives the following criterion for the maximal Betti rank. 
 \begin{thm}[Gao \cite{Gao20}]\label{thm-Gao}
The equality $r(\nu)=2\dim S$ holds if and only if for any abelian subscheme $B$ of $A$ over $S$, we have $\dim S\leq \dim_{\varphi_B}(S)+\dim_SB$.
\end{thm}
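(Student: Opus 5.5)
This statement is quoted from Gao \cite[Theorem 1.1]{Gao20}, so the plan is to derive it as a specialization of that theorem rather than reprove it. Gao's theorem says: for an abelian scheme $A\to S$ over a complex quasi-projective variety, a section $\nu$ whose image is not contained in a proper abelian subscheme translated by torsion, and an integer $l$ with $0\le l\le \dim S$, one has $r(\nu)<2l$ if and only if there is an abelian subscheme $B\subseteq A$ with $\dim\varphi_B(S)+\dim_S B<l$. Here $\varphi_B$ is the composite $S\to A\to A/B\to\CA_{q_B}$ of the section with the quotient and the modular map. I would work throughout with $A_{\min}$ in place of $A$, and with $\nu-\tau$ in place of $\nu$, so that this non-degeneracy hypothesis holds by Definition~\ref{def:min-abelian}.

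First I check that this replacement changes neither side of the equivalence. Translating by the torsion section $\tau$ does not change the Betti map: a torsion section is locally constant in Betti coordinates, so it is a leaf of $\CF_\betti$ and $d\tau$ has zero vertical component. The Betti map of $\nu$ viewed in $A$ factors through the tangent spaces of the fibres of $A_{\min}$, and $A_{\min}$ has its own Betti foliation, obtained by restricting the one on $A$. Hence $r(\nu)=r(\nu-\tau)$, computed either in $A$ or in $A_{\min}$. On the other side, the subschemes $B$ quantified over should be those of $A_{\min}$, as in Theorem~\ref{thm: criterion}, and the maps $\varphi_B$ are defined using $\nu-\tau$. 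So both sides can be read inside $A_{\min}$.

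Next, take $l=\dim S$. The definition of Betti rank gives $r(\nu)\le 2\dim S$ trivially, since the real dimension of $T_sS$ is $2\dim S$. Therefore $r(\nu)=2\dim S$ is the same as "not $r(\nu)<2\dim S$". Taking the contrapositive of Gao's equivalence at $l=\dim S$ then gives: $r(\nu)=2\dim S$ if and only if every abelian subscheme $B$ satisfies $\dim\varphi_B(S)+\dim_S B\ge\dim S$. This is exactly the statement.

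The main obstacle is matching normalizations with \cite{Gao20}. Two points need care. The first is that Gao's $\varphi_B$ uses the modular map to a fine moduli space, which requires passing to a finite étale cover of $S$ that adds a level structure. Such a cover changes neither dimensions nor Betti ranks, and the level structure is needed only to define $\CA_{q_B}$. The second is that Gao states the criterion for $S$ irreducible and $\nu(S)$ Zariski-dense in the relevant translate. I would reduce to this case using minimality of $A_{\min}$: if $\nu-\tau$ lay in a proper abelian subscheme translated by a torsion section, this would contradict the minimality of $A_{\min}$.
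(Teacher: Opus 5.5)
Your proposal is correct and follows the same route as the paper, which simply specializes Gao's Theorem~1.1 to $l=\dim S$ and takes the contrapositive using the trivial bound $r(\nu)\le 2\dim S$. The paper leaves implicit the extra bookkeeping you supply. You pass to $(A_{\min},\nu-\tau)$ so that Gao's hypothesis holds: precisely, that $\bigcup_{N\in\mathbb{Z}}[N](\nu-\tau)(S)$ is Zariski dense in $A_{\min}$, not that $\nu(S)$ itself is dense in a translate. You also check that torsion translation and restriction to $A_{\min}$ do not change the Betti rank. This is the right way to read the quantifier over $B$, consistent with Theorem~\ref{thm: criterion}.
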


	\section{A criterion for bigness}\label{sec:criterion}
	
	In this section, we prove the main criterion relating the bigness of a section $\nu$ for an abelian scheme $\pi: A\to S$ over a quasi-projective variety over a number field.  The argument has two parts. First, one associates with the section \(\nu\) an adelic line bundle on the base whose height function is given by the Néron--Tate height of \(\nu\). Second, one uses a generic non-degeneracy statement for Betti maps to show that the resulting adelic line bundle is positive precisely when the expected dimension inequalities hold.

\subsection{A dynamical construction}\label{dynamic process}
\newcommand{\can}{\mathrm{can}}
Let $\pi: A\to S$ be an abelian scheme over a quasi-projective variety defined over a number field $K$ and $P$ a symmetric and relatively ample line bundle. 
Consider the polarized dynamical system $(A/S, [2], P)$, where $[2]: A\to A$ is the multiplication by 2 map and $[2]^*P=4P$ by the symmetry of $P$.
By the construction \cite[Theorem 6.1.1]{YZ21} and   \cite[Theorem 6.1.2]{YZ21}  we have an adelic line bundle $\bar P$ on $A$ that satisfies
\begin{itemize}
	\item $[2]^*\bar P=4\bar P$.
	\item $\bar P$  is nef. 
\end{itemize} 

Let $\nu: S\to A$ be a section of $\pi$. 
Pulling back by $\nu$, we obtain an adelic line bundle on $S$: 
$$\bar L:=\nu^*\bar P.$$
Thus,$$h_{\bar L}(s)=h_{\bar P}(\nu(s)), \qquad\forall s\in S(\bar K).$$
Let $\wt L$ be the restriction of the image of $\bar{L}$ in $\widehat{\Pic}(S/\bar{K})$.
 Thus, the bigness of $\nu$ on $S$ is equivalent to the bigness of $\wt{L}$ on $S$.

\subsection{Volume formula}
Now we embed $\bar K\incl \BC$.  On $S(\BC)$ and $L(\BC)$, recall that we have the Chern curvature locally given  by the (1, 1)-form 
$$c_1(\bar{L})=\frac{i}{2\pi}\partial\bar{\partial}\log \norm{\ell}^2,$$
where $\ell$ is a trivialization of $L$ on an open subset $U(\BC)\incl S(\BC)$ such that $L|_U\cong\CO_S|_U$.
The same proof of \cite[Proposition 6.3]{GZ24} shows that 
\begin{thm}[Gao--Zhang \cite{GZ24}]\label{lem1}
	$$\widehat{\vol}(\tilde{L})=\int_{S(\BC)}c_1(\bar{L})^{\dim S}.$$
\end{thm}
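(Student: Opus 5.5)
We follow the strategy of \cite[Proposition 6.3]{GZ24}: realize the arithmetic volume of $\wt L$ as a limit of top self-intersections of model adelic line bundles, and then identify that limit with the archimedean integral.

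First, by \cite[Theorem 6.1.1]{YZ21}, the nef adelic line bundle $\bar P$ on $A$ is the limit, in the boundary topology, of the sequence of model metrics
\[
\bar P_n := 4^{-n}[2^n]^*\bar P_0,
\]
where $\bar P_0$ is any initial model of $(A,P)$ over a projective model of $S$. Pulling back by $\nu$ gives $\bar L=\lim_n \nu^*\bar P_n$. The image $\wt L$ in $\widehat{\Pic}(S/\bar K)$ is the geometric part: it remembers only the archimedean, equivalently generic-fiber, information up to the relevant equivalence. By \cite[\S5]{YZ21}, the volume of a nef geometric adelic line bundle equals its top self-intersection $\wt L^{\dim S}$, which is a limit of the top self-intersections of the models $\nu^*\bar P_n$ restricted to the generic fibre. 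Since $\bar P$ is nef, $\wt L$ is nef, so
\[
\widehat{\vol}(\wt L)=\wt L^{\dim S}.
\]

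Second, we compute $\wt L^{\dim S}$ over a single complex embedding. For each model $\nu^*\bar P_n$, the intersection number is computed as an integral of the curvature forms $c_1(\nu^*\bar P_n)^{\dim S}$ over a compactification of $S(\BC)$. This is the standard description of geometric intersection numbers via Deligne pairing and Chern–Weil theory. As $n\to\infty$, the metrics $4^{-n}[2^n]^*\|\cdot\|_0$ converge uniformly on compact subsets of $A(\BC)$ to the canonical metric. On each fiber $A_s$ the canonical curvature form is the flat invariant form; globally it is the semipositive Betti form $\omega$, which is the pullback of a constant form on $\BR^{2g}/\BZ^{2g}$ under the Betti map. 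Its pullback $\nu^*\omega=c_1(\bar L)$ is a smooth semipositive form on $S(\BC)$.

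The main obstacle is justifying convergence of the integrals near the boundary of $S$, where the metrics are only continuous, or have logarithmic singularities, on a compactification. I would handle this as Gao–Zhang do. The Monge–Ampère measures $c_1(\nu^*\bar P_n)^{\dim S}$ converge weakly by Bedford–Taylor continuity for uniformly convergent locally bounded psh potentials on the interior. The uniform boundary-topology convergence of models then controls the contribution near the boundary divisor: it is bounded by $\epsilon_n$ times a fixed intersection number involving the boundary divisor, with $\epsilon_n\to0$. Hence the boundary mass vanishes in the limit, yielding
\[
\wt L^{\dim S}=\int_{S(\BC)}c_1(\bar L)^{\dim S}.
\]
The integral is finite and nonnegative, since $c_1(\bar L)$ is semipositive.
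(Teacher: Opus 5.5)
Your proof is correct in outline but takes a different route from the paper.

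\textbf{The paper's route.} The paper does not use nefness to compute the volume. It writes $\widehat{\vol}(\wt L)=\lim_i\widehat{\vol}(S_i,L_i)$ by \cite[Theorem 5.2.1]{YZ21}. It replaces the dynamical models by averages over $n$-torsion translates, so that the curvature forms $c_1(L_i)$ converge to $c_1(\bar L)$ uniformly on compact sets. It then applies Demailly's holomorphic Morse inequalities to each $L_i$ and shows that the integrals over the loci where $c_1(L_i)$ has negative eigenvalues tend to $0$: the negative eigenvalues are small on $\Omega_i$, and a cutoff argument handles the complement.

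\textbf{Your route.} You use that $\nu^*(\bar P_n+4^{-n}\pi^*\bar N)$ is nef, so $\wt L$ is nef. This gives $\widehat{\vol}(\wt L)=\wt L^{\dim S}$ directly, and afterwards you only need weak convergence of Monge--Amp\`ere measures in the interior. For that, uniform convergence of potentials suffices. This removes both the Morse inequalities and the torsion-averaging trick. The paper needs averaging because $4^{-n}[2^n]^*$ of a smooth form converges only weakly, while Morse inequalities need pointwise control of eigenvalues. The price is that your argument rests on nefness. The Morse route of \cite{GZ24} also works when the approximating models are not nef and only archimedean curvature is controlled.

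\textbf{Two points to tighten.}
\begin{enumerate}
\item Bedford--Taylor continuity needs psh potentials. Run it with the nef models $\nu^*(\bar P_n+4^{-n}\pi^*\bar N)$ and let the correction tend to $0$. The curvature of $\nu^*\bar P_n$ itself need not be semipositive.
\item The $\epsilon_n$-bound from the boundary topology controls $|L_n^{\dim S}-\wt L^{\dim S}|$. It does not control how the measures $c_1(L_n)^{\dim S}$ are distributed near $S_n\setminus S$. The near-boundary mass of $c_1(L_n)^{\dim S}$ tends to that of $c_1(\bar L)^{\dim S}$, which is not $O(\epsilon_n)$. What you actually need is tightness: the mass near the boundary must be small uniformly in $n$. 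That is exactly the input the paper takes from \cite[Theorem 5.4.4]{YZ21}. With that theorem cited, your argument is complete.
\end{enumerate}
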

\begin{proof}
	By the flatness of extension $K\subset \BC$, we have $\widehat{\vol}(\tilde{L}_K)=\widehat{\vol}(\tilde{L}_\BC)$. If the adelic line bundle $\bar{L}$ on $S$ is represented by $(L, (\CS_i, \bar L_i, \ell_i)_{i\geq 1})$, by \cite[Theorem 5.2.1]{YZ21}
	$$\widehat{\vol}(\wt{L})=\lim_{i\to\infty}\widehat{\vol}(S_i, L_i)$$
	Consider the integral projective model $\CA\to \CS$ of $\pi$,  in the construction \ref{dynamic process} and by  \cite[Theorem 6.1.1]{YZ21} we have a sequence of line bundles $(P, (A_i, \bar P_i, \ell_i))$  with limit $(A, \bar P)$ with respect to the dynamical system $(A, [2], P)$.
Moreover, $\bar P$ is nef. More precisely, there  is an ample Hermitian line bundle $\bar{N}$  on $A$ such that $\bar{\sP}^\Delta_i+4^{-i}\pi^*\bar{N}$ is nef for each $i$.

	For any $n\in \BN$, we have an action of $A[n]$ on $A$ by translating  torsion points: 
	$$(m, p): A[n]\times A\to A\times A,\qquad (t, x)\mapsto (x+t, x).$$
	Where $p$ is the projection on the second factor. By  the theorem of square, $N_pm^*P=(P)^{n^{2g}}$. Thus, we obtain an adelic metrized line bundle 
	$$\bar P_{i,n}:=n^{-2g}N_pm^*\bar P_i.$$
	This bundle is realized on some projective model $A_{i, n}$ of $A$,   with connection morphism $\ell_{i, n}: P\to P_{i, n, A}.$ Moreover $\div(\ell_{i, n})$ is in fact bounded by $\div(\ell_i)$. Over $\BC$ the curvature form $c_1(\bar P(\BC))$ is obtained from the curvature form $c_1(\bar P_i(\BC))$ by taking the average over $n$-torsion points. It  follows that these forms converge to $c_1(\bar{P})$ uniformly in any compact subset of $A(\BC)$. These bundles also induce a double sequence of model line bundles $(S_i, \bar L_{i,n})$ of $(S, \bar{L})$  so that they converge to $(S, \bar{L})$ as $i\to \infty$, and that the forms $c_1(\bar{L}_{i, n}(\BC))$ uniformly converge to $c_1(\bar{L}(\BC))$ as $n\to\infty$.

	More precisely, let \( \Omega_i \) be an increasing sequence of relatively compact open subsets of \( S(\mathbb{C}) \) such that \( S(\mathbb{C}) = \bigcup \Omega_i \), and \( \epsilon_i \) be a decreasing sequence of positive numbers converging to 0 so that on \( \Omega_i \),
	\[
	c_1({L}_{\mathbb{C}}) \leq \epsilon_i^{-1} c_1({N}_{\mathbb{C}}).
	\]
Then for each \( i \), choose \( n_i \) so that
	\begin{equation} \label{eq:approx}
		- \epsilon_i^d c_1({N}_{\mathbb{C}}) \leq c_1({L}_{i,n_i,\mathbb{C}}) - c_1({L}_{\mathbb{C}}) \leq \epsilon_i^d c_1({N}_{\mathbb{C}})
	\end{equation}
	as Hermitian forms on the tangent bundle over \( \Omega_i \). Set \( {L}_i := {L}_{i,n_i} \).
	
	Now apply Demailly’s Morse inequality \cite{Dem91} to \( {L}_i \) on \( S_i(\mathbb{C}) \). For each \( q \in \mathbb{N} \), let \( S_{i,q} \) denote the subset of points where \( c_1({L}_i) \) has \( q \) negative eigenvalues. Then let $d=\dim S$,
	\[
	h^q(k{L}_i) \leq \frac{k^d}{d!} \left| \int_{S_{i,q}} c_1({L}_i)^d \right| + o(k^d),
	\quad
	\sum_q (-1)^q h^q(k{L}_i) = \frac{k^d}{d!} \int_{S(\mathbb{C})} c_1({L}_i)^d + o(k^d).
	\]
	It follows that
	\[
	\left| \widehat{\mathrm{vol}}({L}_i) - \int_{S(\mathbb{C})} c_1({L}_i)^d \right|
	\leq \sum_{q > 0} \left| \int_{S_{i,q}} c_1({L}_i)^d \right|.
	\]
	By \cite[Theorem 5.4.4]{YZ21},
	\[
	\lim_{i \to \infty} \int_{S(\mathbb{C})} c_1({L}_i)^d = \int_{S(\mathbb{C})} c_1({L})^d.
	\]
	It remains to show \( \int_{S_{i,q}} c_1({L}_i)^d \to 0 \) for each \( q > 0 \), which is done by estimating the integral over \( \Omega_i \cap S_{i,q} \) and \( S_{i,q} \setminus \Omega_i \) respectively, using inequality \eqref{eq:approx} and constructing cut-off functions. We omit further technicalities here.

We divide the integral  into two parts, over \( \Omega_i \cap S_{i,q} \) and over its complement \( S_{i,q} \setminus \Omega_i \).

From inequality \ref{eq:approx}, we know:
\[
- \epsilon_i^d c_1({N}_{\mathbb{C}}) \leq c_1({L}_i) \leq (\epsilon_i^{-1} + \epsilon_i^d) c_1({N}_{\mathbb{C}}).
\]
Hence on \( \Omega_i \cap S_{i,q} \), all eigenvalues of \( c_1({L}_i) \) are bounded above by \( \epsilon_i^{-1} + \epsilon_i^d \), and there is at least one negative eigenvalue with absolute value bounded by \( \epsilon_i^d \).
Thus, the top-degree form \( |c_1({L}_i)^d| \) is bounded by
\[
\epsilon_i^d (\epsilon_i^{-1} + \epsilon_i^d)^{d-1} c_1({N})^d = \epsilon_i (1 + \epsilon_i^{d+1})^{d-1} c_1({N})^d.
\]
It follows that:
\[
\int_{S_{i,q} \cap \Omega_i} |c_1({L}_i)^d| = O(\epsilon_i).
\]

Now, we handle the integral over \( S_{i,q} \setminus \Omega_i \). Let \( \Omega_i' \subset \Omega_i \) be a sequence of compact subsets with \( \bigcup \Omega_i' = S(\mathbb{C}) \). Then there is an increasing sequence of continuous functions \( f_i \) such that:
\begin{itemize}
\item 
 \( f_i(x) = 1 \) on \( S(\mathbb{C}) \setminus \Omega_i \),
\item \( f_i = 0 \) on \( \Omega_i' \).
\end{itemize}
Then for any \( i \geq j \), we have
\[
\int_{S(\mathbb{C}) \setminus \Omega_i} c_1({L}_i )^d 
\leq \int_{S(\mathbb{C})} f_i \cdot c_1({L}_i)^d 
\leq \int_{S(\mathbb{C})} f_j \cdot c_1({L}_i )^d.
\]
Fix \( j \), then as \( i \to \infty \),
\[
\limsup_{i \to \infty} \int_{S(\mathbb{C}) \setminus \Omega_i} c_1({L}_i )^d
\leq \int_{S(\mathbb{C})} f_j \cdot c_1({L})^d 
= \int_{S(\mathbb{C}) \setminus \Omega_j'} c_1({L})^d.
\]
Letting \( j \to \infty \), we conclude:
\[
\lim_{i \to \infty} \int_{S(\mathbb{C}) \setminus \Omega_i} c_1({L}_i )^d = 0.
\]
\end{proof}
So we have the following
\begin{cor}\label{lem2}
	The $\nu$ is big on $S$ if and only  if $c_1(\bar{L})$ is not identical to 0 on $S(\BC)$.
\end{cor}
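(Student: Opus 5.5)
The plan is to combine the equivalence established in \S\ref{dynamic process} (bigness of $\nu$ on $S$ is equivalent to bigness of $\wt L$, via Theorem~\ref{thm-YZ}) with the volume formula of Theorem~\ref{lem1}. Bigness of $\wt L$ in the sense of Yuan--Zhang is equivalent to $\widehat{\vol}(\wt L)>0$. So the statement reduces to showing
\[
\int_{S(\BC)}c_1(\bar L)^{\dim S}>0 \iff c_1(\bar L)\not\equiv 0 \text{ on } S(\BC).
\]
Here one should read ``not identically $0$'' in the sense appropriate for a top power: the form $c_1(\bar L)^{\dim S}$ does not vanish identically, i.e.\ $c_1(\bar L)$ has full rank at some point. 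If the statement is meant literally for $c_1(\bar L)$ itself, I would identify the two notions using the structure of $c_1(\bar L)$ described below.

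The key input is semipositivity. Since $\bar P$ is nef, and is the limit of the $[2]$-dynamical construction, its curvature $c_1(\bar P)$ on $A(\BC)$ is a semipositive $(1,1)$-form. In fact it is the invariant form which, in Betti coordinates $A_U(\BC)\cong(\BR^{2g}/\BZ^{2g})\times U$, is the pullback under the Betti projection $b_U$ of a constant positive form on the torus: it is flat along the leaves of $\CF_\betti$. Consequently $c_1(\bar L)=\nu^*c_1(\bar P)$ is semipositive on $S(\BC)$. Pointwise, at $s$ it equals the pullback of a positive definite (Riemann form) Hermitian form on $T_{\nu(s)}A_s$ via the Betti differential $\nu_{\betti,s}$.

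From this, both directions follow. If $c_1(\bar L)^{\dim S}\equiv 0$, the integral vanishes, so $\widehat\vol(\wt L)=0$ and $\nu$ is not big. Conversely, suppose the top power is nonzero at some point. Because it is a real-analytic semipositive top form, it is then strictly positive on a nonempty open set, so the integral is positive, and Theorem~\ref{lem1} together with Theorem~\ref{thm-YZ} gives bigness.

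The main obstacle is the justification of the semipositivity and flatness of $c_1(\bar P)$ on the quasi-projective $A(\BC)$. It is also the bridge to the Betti-rank picture that will later link this corollary to Theorem~\ref{thm-Gao}: full rank of $c_1(\bar L)$ at $s$ amounts to $\nu_{\betti,s}$ being injective, i.e.\ $r(\nu)=2\dim S$. For the semipositivity I would argue via the averaging in the proof of Theorem~\ref{lem1}. Each $c_1(\bar P_i)$ plus a small multiple $4^{-i}c_1(N)$ is semipositive, and $[2]$-invariance gives $c_1(\bar P)=\lim 4^{-n}([2^n])^*c_1(\bar P_0)$. This is a limit of semipositive forms that converges to the translation-invariant, leafwise-flat form. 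I would cite the standard description of the canonical metric on abelian schemes rather than redo this computation.
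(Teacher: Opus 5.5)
Your argument is the paper's own: Theorem~\ref{thm-YZ}, bigness of $\wt L$ as $\widehat{\vol}(\wt L)>0$, and the volume formula of Theorem~\ref{lem1}. Your semipositivity of $c_1(\bar L)=\nu^*c_1(\bar P)$ supplies the step the paper leaves implicit, and it is immediate from your own pointwise description: the Betti leaves are complex submanifolds (graphs of local holomorphic sections), so $\nu_{\betti,s}$ is $\BC$-linear and $c_1(\bar L)_s$ is the pullback of a positive definite Hermitian form.

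Drop the fallback to the literal reading, though. For $\dim S\ge 2$ the form $c_1(\bar L)$ can be nonzero yet degenerate everywhere (e.g.\ when $A$ and $\nu$ are pulled back from a lower-dimensional base), and then $\widehat{\vol}(\wt L)=0$. So only your top-power reading is correct, and it is the one the paper actually uses through Lemma~\ref{lem3}.
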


\subsection{Proof of the theorem \ref{thm: criterion}}

To prove the bigness of $\nu$, we want to relate the volume form $c_1(\bar{\CL})^{\dim S}$  to the non-degeneracy of the Betti map. We refer to the definitions of Betti rank and Betti map in \ref{betti}. Thus, we have the equivalence.
\begin{lem}\label{lem3}
		For any $s\in S(\BC)$, the following are equivalent:
	\begin{itemize}
		\item $c_1(\bar{\CL})_s^{\dim S}\neq 0$.
		\item The Betti map is injective, i.e. $\dim_{\nu_{\betti, s}}(T_s S)=\dim S$.
	\end{itemize}
\end{lem}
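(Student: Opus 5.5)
We will compute $c_1(\bar L)$ explicitly on $S(\BC)$ by pulling back the curvature form of $\bar P$ along $\nu$, and identify it with the pull-back of a flat positive form on the Betti torus fibres.

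First, we describe $c_1(\bar P)$ on $A(\BC)$. Over a simply connected open $U\subset S(\BC)$ on which $\BV_\BZ$ is trivialized, we have $A_U(\BC)\cong(\BR^{2g}/\BZ^{2g})\times U$ with Betti projection $b_U$. The key claim is that
$$c_1(\bar P)|_{A_U}=b_U^*\omega,$$
where $\omega$ is the translation-invariant $(1,1)$-type $2$-form on $\BR^{2g}/\BZ^{2g}$ determined by the Riemann form $E$ of the polarization, i.e.\ $\omega=\sum_{j} dx_j\wedge dx_{g+j}$ in a symplectic basis. To prove this, we use $[2]^*\bar P=4\bar P$, so $[2]^*c_1(\bar P)=4c_1(\bar P)$. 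In Betti coordinates $[2]$ acts as $(x,s)\mapsto(2x,s)$. Write a smooth closed form in components of types (torus,torus), (torus,base) and (base,base). Iterating $4^{-n}[2^n]^*$ kills every component except the purely toral one with constant coefficients, and that component must be the one determined by $E$ on each fibre. The same averaging over torsion translations used in the proof of Theorem \ref{lem1} shows that $c_1(\bar P)$ is the limit of such averages. Hence $c_1(\bar P)$ is exactly the form $b_U^*\omega$, which is the standard Betti form.

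Next, we pull back along $\nu$. This gives $c_1(\bar L)_s=\nu^*b_U^*\omega$. Because $\omega$ is constant and $T_{\nu(s)}A=T\CF_\betti\oplus T A_s$, with $b_U$ killing $T\CF_\betti$, we get
$$c_1(\bar L)_s(v,w)=\omega\big(\nu_{\betti,s}(v),\nu_{\betti,s}(w)\big),\qquad v,w\in T_sS.$$
Moreover $\omega$ restricted to $T A_s=\mathrm{Lie}(A_s)$ is the positive definite Hermitian form $H=E(i\cdot,\cdot)$, after translating the $2$-form into the associated $(1,1)$-form. So $c_1(\bar L)_s$, viewed as a Hermitian form on $T_sS$, is $\nu_{\betti,s}^*H$. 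Here $\nu_{\betti,s}$ is regarded as a $\BC$-linear map after composing with the complex structure of the fibre, since $d\nu$ is holomorphic and the projection along the holomorphic-horizontal... The complex-linearity of this map has to be checked separately, and that is the first subtlety. If it fails, we instead work with the real rank, so that $\omega(\nu_{\betti}\cdot,\nu_{\betti}\cdot)$ is a semipositive $(1,1)$-form whose kernel is $\ker\nu_{\betti,s}$, or the complex span of it.

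Finally, we conclude. A semipositive Hermitian form on $T_sS$ has nonzero top power if and only if it is positive definite, which holds if and only if its kernel is trivial. Since $H$ is positive definite, the kernel of $\nu_{\betti,s}^*H$ is exactly the kernel of $\nu_{\betti,s}$, i.e.\ the real tangent vectors $v$ with $\nu_{\betti,s}(v)=0$. So $c_1(\bar L)_s^{\dim S}\neq0$ if and only if $\nu_{\betti,s}$ is injective, meaning its image has real dimension $2\dim S$, which is the paper's rank normalization.

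The main obstacle is the first step: rigorously identifying the curvature of the Yuan--Zhang invariant metric with $b_U^*\omega$. This means controlling the mixed and base components under $[2]$, and showing semipositivity, which follows from nefness of $\bar P$. We expect the $(1,1)$/semipositivity bookkeeping in the second step to be routine once that identification is in place.
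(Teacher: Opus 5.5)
Your strategy is the same as the paper's. Its two-line argument tacitly uses exactly your formula $c_1(\bar L)_s(v,w)=\omega(\nu_{\betti,s}v,\nu_{\betti,s}w)$ together with semipositivity, and your Step 1 (the curvature of the invariant metric is the Betti form) supplies the input the paper leaves unstated. However, you leave open the one point on which the nontrivial implication rests, namely the $\BC$-linearity of $\nu_{\betti,s}$, and your fallback does not work.

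The implication ``$\nu_{\betti,s}$ not injective $\Rightarrow c_1(\bar L)_s^{\dim S}=0$'' needs nothing extra. If $\nu_{\betti,s}(v)=0$ with $v\neq 0$, then $v$ lies in the kernel of the real $2$-form $c_1(\bar L)_s$, and a degenerate $2$-form on a $2d$-dimensional real space has vanishing $d$-th power. The converse requires $\ker c_1(\bar L)_s\subseteq \ker\nu_{\betti,s}$, and for a merely $\BR$-linear map this fails in general. If the image of $\nu_{\betti,s}$ were $\omega$-isotropic, the pulled-back form would vanish identically even though $\nu_{\betti,s}$ is injective. So your assertion that the kernel is ``$\ker\nu_{\betti,s}$ or its complex span'' cannot be obtained from real-rank considerations alone.

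The missing idea is that the leaves of $\CF_{\betti}$ are complex submanifolds. In a local holomorphic uniformization $\mathrm{Lie}(A_U)\cong \BC^g\times U$, the period vectors $\gamma_1(s),\dots,\gamma_{2g}(s)$ are holomorphic in $s$. A leaf is the image of $s\mapsto \sum_i\beta_i\gamma_i(s)$ with constant $\beta_i\in\BR$. Hence $T\CF_{\betti}$ is a complex subspace, and the projection $T_{\nu(s)}A\to T_{\nu(s)}A_s$ along it is $\BC$-linear. Concretely, write a holomorphic lift $\tilde\nu=\sum_i\beta_i(s)\gamma_i(s)$ with real Betti coordinates $\beta_i$. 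Then $\nu_{\betti,s}(v)=\sum_i d\beta_i(v)\gamma_i(s)=d\tilde\nu_s(v)-\sum_i\beta_i(s)\,d\gamma_{i,s}(v)$, which is a difference of $\BC$-linear maps.

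With this, $c_1(\bar L)_s(v,Jv)=E_s(\nu_{\betti,s}v,J_s\nu_{\betti,s}v)$. Up to the sign convention this equals $H_s(\nu_{\betti,s}v,\nu_{\betti,s}v)$, which is positive whenever $\nu_{\betti,s}v\neq 0$. So the kernel of $c_1(\bar L)_s$ is exactly $\ker\nu_{\betti,s}$, and your argument closes. The paper's first paragraph asserts this same implication without justification, so this is the point worth writing out.
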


\begin{proof}
	Assume that $c_1(\bar{\CL})_s^{\dim S}= 0$ for an $s\in S(\BC)$. Then there exists $0\neq u\in T_sS$ with $\nu_{\betti, s}(u)=0$. Thus $\ker \nu_{\betti, s}\neq 0$. Therefore, (2) fails. 
	
	Conversely, assume that the Betti map is not injective. Then there is $0\neq u\in T_sS$ such that $\nu_{\betti, s}=0$. Thus $c_1(\bar{\CL})_s(u, \bar{u})=0$.  Thus $u$ is an eigenvector of the Hermitian matrix defining the 1-1 form $c_1(\bar{\CL})$ with eigenvalue 0. Hence, the determinant of this matrix is 0, so $c_1(\bar{\CL})^{\dim S}=0$ at $s$. Thus (1) fails.
\end{proof}

\begin{proof}[Proof of  Theorem \ref{thm: criterion}]

We will use the same terminology as in the beginning of this chapter. 
We follow the strategies used in the papers \cite{GZ24} and \cite{Gao20}. 
\begin{itemize}
	\item The \cite{GZ24}  relates the bigness of $\nu$ on $S$ with the non-degeneracy of the 1-1 form $c_1(\bar{\CL})$ and the Betti rank of $\nu$.
	\item The  \cite{Gao20} relates the inequality $\dim S\leq \dim_{\varphi_B}(S)+\dim B$ with the Betti rank of $\nu$. 
\end{itemize}

Step 1: By Theorem \ref{lem1} we have
$$\widehat{\vol}(\tilde{\CL})=\int_{S(\BC)}c_1(\bar{\CL})^{\dim S}.$$
Then by Theorem \ref{thm-YZ},  the bigness of $\nu$ is equivalent to $c_1(\bar{\CL})\nequiv0$.

Step 2: By Lemma \ref{lem3} $c_1(\bar{\CL})\nequiv0$ if and only if the Betti rank $r(\nu)=2\dim S$. 

Step 3: By Theorem \ref{thm-Gao}, 
$r(\nu)=2\dim S$ if and only if for any abelian subscheme $B$ of $A$ over $S$, we have $\dim S\leq \dim_{\varphi_B}(S)+\dim_SB$.
So we are done.
\end{proof}

\section{Geometry of curves of genus four with  triple involutions}
\label{sec:geometry}

In this section, we construct a  $3$-dimensional family of genus $4$ curves admitting triple involutions. The main feature of this family is that the Jacobian admits an explicit decomposition, up to isogeny, into elliptic factors arising from quotient constructions. This makes it possible to verify the dimension inequalities in Theorem~\ref{thm: criterion} by hand.

\subsection{Definition of the family}

Let $\BP^1$ be equipped with the involutions
$$
\omega_1 : x \mapsto -x,
\qquad
\omega_2 : x \mapsto x^{-1},
\qquad
\omega_3 := \omega_1 \circ \omega_2 : x \mapsto -x^{-1}.
$$
These generate a subgroup $W$ isomorphic to $(\ZZ/2\ZZ)^2$ inside $\Aut(\BP^1)$. 
Let $\tilde{W}$ denote the normalizer of $W$ in $\Aut(\BP^1)$. Then $\wt W/W$ permutes $\omega_i$'s, thus induces a map 
$$\wt W/W\lra S_3.$$

\begin{prop}\label{S_3}
The above map is an isomorphism   $\tilde{W}/W\iso S_3$.
In fact, $$\tilde{W}\cong W\rtimes S_3\cong S_4.$$
\end{prop}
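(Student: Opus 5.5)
The plan is to work in $\Aut(\BP^1)=\mathrm{PGL}_2(\BC)$ over an algebraically closed field of characteristic $0$ (or any field containing $i$; the statement is really about $\mathrm{PGL}_2(\bar\BQ)$). There are three steps: show that $\wt W\to S_3$ is surjective, show that its kernel is exactly $W$, and then identify the extension with $S_4$.

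\textbf{Kernel.} Suppose $g\in\wt W$ acts trivially on $W$ by conjugation. Then $g$ commutes with each $\omega_i$, so $g$ preserves the fixed-point pair of each $\omega_i$:
\[
\omega_1:\ \{0,\infty\},\qquad \omega_2:\ \{1,-1\},\qquad \omega_3:\ \{i,-i\}.
\]
Preserving $\{0,\infty\}$ forces $g$ to be either $x\mapsto ax$ or $x\mapsto a/x$. Preserving $\{\pm1\}$ then forces $a=\pm1$. The four resulting maps are exactly $\mathrm{id},\omega_1,\omega_2,\omega_3$. Hence the kernel equals $W$. (Conversely, $W$ is abelian, so it does lie in the kernel.)

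\textbf{Surjectivity.} It suffices to exhibit a transposition and a $3$-cycle among the conjugation actions.
\begin{itemize}
\item $\sigma(x)=ix$ satisfies $\sigma\omega_1\sigma^{-1}=\omega_1$ and $\sigma\omega_2\sigma^{-1}(x)=i\cdot(ix)^{-1}\cdot$ (after simplification) $=-x^{-1}$. So $\sigma$ swaps $\omega_2$ and $\omega_3$.
\item $\tau(x)=\frac{x+1}{x-1}$, or a suitable variant such as $\frac{x+i}{x-i}$, should permute the three fixed-point pairs $\{0,\infty\},\{\pm1\},\{\pm i\}$ cyclically. Since $\tau$ normalizes $W$ if and only if it permutes these three pairs, this gives the required $3$-cycle.
\end{itemize}
The one genuinely computational step is checking which Möbius map realizes the $3$-cycle. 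I would first test $x\mapsto \frac{x+i}{x-i}$ on the six points $0,\infty,\pm1,\pm i$, and adjust by composing with $\sigma$ if needed.

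\textbf{Identification with $S_4$.} The six points $\{0,\infty,\pm1,\pm i\}$ are the vertices of an octahedron on the Riemann sphere. Indeed, $\wt W$ is exactly the stabilizer of this set: $g$ normalizes $W$ if and only if $g$ permutes the fixed-point pairs, that is, preserves the set of antipodal pairs. That stabilizer is the rotation group of the octahedron, which is $S_4$, and $W$ corresponds to the Klein four-group $V_4\triangleleft S_4$. The extension splits: the subgroup generated by $\sigma$ and $\tau$ has order $6$ and meets $W$ trivially (after verifying it contains no nontrivial element of $W$). This gives $\wt W\cong W\rtimes S_3\cong S_4$.

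The main obstacle is making the splitting explicit and confirming the order count $|\wt W|=24$. An alternative that avoids this is to check directly that $\wt W$ acts faithfully on the four pairs of opposite faces of the octahedron.
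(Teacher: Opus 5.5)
Your argument for $\wt W/W\iso S_3$ is correct and follows the paper's proof. The kernel of the conjugation map is the centralizer, and commuting with $\omega_1,\omega_2$ cuts it down to $x\mapsto \pm x^{\pm1}$. Surjectivity comes from exhibiting a transposition and a $3$-cycle; the paper uses $\frac{x+1}{x-1}$ and $\frac{x-i}{x+i}$.

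One correction: your first candidate $\tau(x)=\frac{x+1}{x-1}$ is not a $3$-cycle. It sends $0\mapsto -1$, $\infty\mapsto 1$, $1\mapsto\infty$, $-1\mapsto 0$ and $\pm i\mapsto\mp i$. So it swaps $\{0,\infty\}$ with $\{\pm1\}$ and fixes $\{\pm i\}$; it is the paper's transposition. This is harmless, since together with $\sigma$ it still generates $S_3$. Your fallback $h(x)=\frac{x+i}{x-i}$ does cycle $\{0,\infty\}\to\{\pm1\}\to\{\pm i\}\to\{0,\infty\}$. (Also, $\sigma\omega_2\sigma^{-1}(x)=i\cdot(x/i)^{-1}$, not $i\cdot(ix)^{-1}$, though your result $-x^{-1}$ is right.)

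The genuine gap is the splitting step, which would fail as proposed. Your $\sigma(x)=ix$ has order $4$ in $\mathrm{PGL}_2$, with $\sigma^2=\omega_1\in W$, so $\langle\sigma,\tau\rangle$ can never meet $W$ trivially. In fact it contains $\omega_1$ and maps onto $S_3$, which permutes $\omega_1,\omega_2,\omega_3$ transitively, so it contains all of $W$ and equals $\wt W$, for either choice of $\tau$. The verification you deferred would therefore come out negative.

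The fix is to use an order-two lift in the same coset. The map $t:=\sigma\omega_3\colon x\mapsto -i/x$ still swaps $\omega_2$ and $\omega_3$ and satisfies $t^2=1$. With $h$ as above, $th\colon x\mapsto \frac{-ix-1}{x+i}$ has a trace-zero matrix, so $(th)^2=1$. Hence $\langle h,t\rangle$ is a quotient of $S_3$ that surjects onto $\wt W/W\cong S_3$. It is therefore a complement of order $6$: the stabilizer of the opposite faces $\{1,i,\infty\}$ and $\{-1,-i,0\}$. This gives $\wt W\cong \BF_2^2\rtimes \GL_2(\BF_2)\cong S_4$.

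Alternatively, you can avoid explicit splitting. The count $|\wt W|=24$ is already immediate from your kernel and surjectivity steps, so it is not an obstacle. The octahedral rotation group has order $24$ and permutes antipodal pairs, so by your own criterion it lies in $\wt W$ and hence equals it. The splitting then follows from $S_4\cong V_4\rtimes S_3$, with $W$ the normal Klein four-group.

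The paper's own proof stops at $\wt W/W\iso S_3$ and never justifies $\wt W\cong W\rtimes S_3\cong S_4$. Once repaired, this part of your proposal covers more than the paper does.
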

\begin{proof}
Since every $g\in\tilde{W}$ acts on $W$ by conjugation,  there is a homomorphism
$$\psi: \tilde{W}\to \Aut(W),\qquad g\mapsto (\psi(g): \omega_i\mapsto g\omega_i g^{-1} ).$$	
Since $W\cong (\BZ/2\BZ)^2$, we have 
$$\Aut(W)\cong \GL_2(\BF_2)\cong S_3.$$
Thus $\tilde{W}/W$ is a subgroup of $S_3$. 
\begin{itemize}
	\item The kernel is the centralizer of $W$. An element $g\in \Aut(\BP^1)$ commutes with $\omega_1$ forces $g$ to be one of the form 
	$$g: x\mapsto \lambda x \qquad \text{or}\qquad g: x\mapsto \frac{\lambda}{x}.$$
	If $g$ also commutes with $\omega_2$, it forces $\lambda=\pm1.$ Hence $\ker\psi=W$. The conjugation map induces an injection
	$$\tilde{W}/W\incl \Aut(W)\cong S_3.$$
	\item Over $\bar{\BQ}$ the injection map is surjective. Since $S_3$ is generated by a transposition (12) and a 3-cycle (123). We find matrices in $\tilde{W}$ that induce these permutations: For the transposition (12) we need to find $g\in \tilde{W}$ such that 
	$$g\omega_1=\omega_2g,\qquad g\omega_2=\omega_1g,\qquad g\omega_3=\omega_3g.$$
	By calculation $g=\begin{pmatrix}
		1&1\\
		1&-1
	\end{pmatrix};$ for the 3-cycle (123), let $i=\sqrt{-1}$ and $g=\begin{pmatrix}
	1&-i\\
	1&i
\end{pmatrix}$,
then $$g\omega_2(x)=-1/g(x)=\omega_3g,\qquad g\omega_3(x)=-g(x)=\omega_1 g,\qquad g\omega_1(x)=1/g(x)=\omega_2 g.$$
\end{itemize}
\end{proof}

 By letting these involutions in $W\times W$ act on both factors of $\BP^1 \times \BP^1$, one obtains induced involutions on the space $\sM$ of curves of bidegree $(3,3)$.

We consider the diagonal subgroup $\Sigma:=\Delta W$ of $W^2$ of involutions 
$$\sigma_i=(\omega_i, \omega_i),\qquad i=1, 2, 3$$
 on $\BP^1\times \BP^1$, where $\sigma_3=\sigma_1\cdot \sigma_2$. 
 Notice that $\sigma_1, \sigma_2$ generate a Klein four subgroup $\Sigma=\Delta W\subset W^2$:
 $$\Sigma=\{1, \sigma_1, \sigma_2, \sigma_1\sigma_2\}.$$
 Therefore, we call this a ``triple involution".

We consider the moduli space $\sM^\Sigma$ of smooth curves
$$
C \subset \BP^1 \times \BP^1
$$
of bidegree (3, 3) that are preserved by a prescribed triple  involution, namely the subgroup $\Sigma$. Then $\sM^\Sigma$ has  actions by $W^2/\Sigma$, and $\wt \Sigma/\Sigma$, where $\wt \Sigma =\Delta \wt W$.

\begin{prop}\label{prop-lm} The variety $\sM^\Sigma $ has four connected components $\sM^\Sigma _{\lambda, \mu}$ indexed  $(\lambda, \mu)\in \{\pm 1\}^2$.
 Each component $\sM^\Sigma_{\lambda, \mu}$ is isomorphic to an open subset of $\BP^3$ with universal curve $C_{\lambda, \mu}$ given as follows:
 $$\sC_{1, \pm 1}: a(1\pm x^3y^3)+b(xy\pm x^2y^2)+c(x^2\pm xy^3)+d(y^2\pm x^3y)=0,$$
		$$\sC_{-1, \pm 1}: a(x\pm x^2y^3)+b(y\pm x^3y^2)+c(x^3\pm y^3)+d(y\pm x^3y^2)=0,$$
		where $[a, b, c, d]$ are the homogeneous coordinates of $\BP^3$.
Moreover, the group $W^2/\Sigma$ acts transitively on this set of connected components, and $\wt \Sigma/\Sigma$ stabilizes $\sN_{1,1}$, and permuting other three components.
\end{prop}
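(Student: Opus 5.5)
The plan is to reduce the statement to linear algebra on the $16$-dimensional space $V:=H^0(\BP^1\times\BP^1,\CO(3,3))$ of bihomogeneous forms, with monomial basis $x^iy^j$ for $0\le i,j\le 3$ in affine coordinates. A curve $C=\{F=0\}$ is preserved by $\Sigma$ exactly when $\sigma^*F\in \bar\BQ^\times F$ for every $\sigma\in\Sigma$.

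\textbf{Step 1: a genuine linear action of $\Sigma$.} I first make $\Sigma$ act linearly on $V$ rather than just projectively. Each $\omega_i$ lifts to $\GL_2$ only up to a scalar $c$, which acts on $V$ through $c^6$, and the lifts of $\omega_1,\omega_2$ anticommute in $\GL_2$. To avoid this ambiguity I use the diagonal structure: $V\cong \mathrm{Sym}^3U\otimes\mathrm{Sym}^3U$, and $g\mapsto \mathrm{Sym}^3\hat g\otimes \mathrm{Sym}^3\hat g$, normalized by $\det \hat g=\pm1$, gives a well-defined action of $\Delta\,\mathrm{PGL}_2$ up to at most a global sign. I will check directly that on monomials this is the genuine action
\[
\sigma_1: x^iy^j\mapsto (-1)^{i+j}x^iy^j,\qquad \sigma_2: x^iy^j\mapsto x^{3-i}y^{3-j}.
\]
Both maps are involutions and they commute, so $V$ decomposes into four character spaces $V_{\lambda,\mu}$, where $\lambda$ is the $\sigma_1$-eigenvalue and $\mu$ the $\sigma_2$-eigenvalue.

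\textbf{Step 2: the eigenspaces and the components.} The even-parity monomials form four $\sigma_2$-orbits: $\{1,x^3y^3\}$, $\{xy,x^2y^2\}$, $\{x^2,xy^3\}$, $\{y^2,x^3y\}$. Taking $\pm$ combinations gives the bases of $V_{1,\pm1}$ listed for $\sC_{1,\pm1}$. The odd monomials give $V_{-1,\pm1}$ in the same way, with orbits $\{x,x^2y^3\}$, $\{y,x^3y^2\}$, $\{x^3,y^3\}$, $\{x^2y,xy^2\}$. (The printed formula for $\sC_{-1,\pm1}$ repeats the $b$-term; the fourth basis element should be $x^2y\pm xy^2$.) Each eigenspace therefore has dimension $4$.

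A $\Sigma$-stable line is a common eigenline, so it lies in exactly one $V_{\lambda,\mu}$. Hence
\[
\sM^\Sigma=\bigsqcup_{\lambda,\mu}\bigl(\BP(V_{\lambda,\mu})\cap\{\text{smooth}\}\bigr).
\]
The four pieces are pairwise disjoint closed subsets of the smooth locus, and each is a Zariski open subset of $\BP^3$, hence irreducible and connected. The only remaining point is non-emptiness, which I would settle by exhibiting one smooth member in each family, for instance via a Jacobian-criterion check at a specific $[a,b,c,d]$. I expect this to be the main computational obstacle, though only a routine one. I would use the transitivity of Step 3 to reduce it to a single component, say $\sC_{1,1}$.

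\textbf{Step 3: the group actions on components.} Elements of $W^2$ normalize $\Sigma$, so they permute the characters. With the monomial action, $(\omega_1,1)$ sends $x^iy^j\mapsto(-1)^ix^iy^j$; it commutes with $\sigma_1$ and anticommutes with $\sigma_2$ up to the sign $(-1)^3$, so it flips $\mu$. Similarly $(\omega_2,1)$ sends $x^iy^j\mapsto x^{3-i}y^j$, which changes the parity of $i+j$, so it flips $\lambda$. Thus $W^2/\Sigma\cong(\BZ/2)^2$ acts simply transitively on the four components.

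For $\wt\Sigma=\Delta\wt W$, each element acts on $V$ through the genuine $\Delta\,\mathrm{PGL}_2$-action of Step 1 and conjugates $\Sigma$ by the permutation of Proposition~\ref{S_3}. It therefore maps $V_\chi$ to $V_{\chi\circ\mathrm{ad}(g)^{-1}}$. The trivial character is fixed, and the three nontrivial characters are permuted transitively through $\wt W/W\cong S_3\cong\GL_2(\BF_2)$, which acts on the dual of $\Sigma$. It remains to confirm that $V_{1,1}$ is the trivial character for this canonical action, not merely for some choice of lift. I would check this by applying the explicit matrices from Proposition~\ref{S_3} to the invariant form $1+x^3y^3$, which should be fixed up to the global sign from Step 1.
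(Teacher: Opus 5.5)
Your Steps 1 and 2, and the transitivity of $W^2/\Sigma$, match the paper's argument: the eigenvalues $\lambda,\mu\in\{\pm1\}$ of $\sigma_1,\sigma_2$, with $(\omega_1,\Id)$ flipping $\mu$ and $(\omega_2,\Id)$ flipping $\lambda$. You are right that the $d$-term of $\sC_{-1,\pm1}$ is misprinted (it should be $xy^2\pm x^2y$, as in the paper's proof), and that non-emptiness of each piece needs a check.

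The gap is in your last step. The verification you defer would fail, because the final clause of the statement is false as written. Your monomial formulas come from the determinant $-1$ lifts $\mathrm{diag}(-1,1)$ and the swap. They give a genuine action of $\Sigma$, but not the restriction of a genuine action of $\wt\Sigma$, and the discrepancy is exactly the global sign you set aside. The genuine action comes from normalizing $\det\hat g=1$; then $-1\in\SL_2$ acts by $(-1)^6=1$. With this normalization $\omega_1$ lifts to $\mathrm{diag}(i,-i)$, which acts on $X^aZ^{3-a}$ by $i(-1)^a$, hence on $x^ay^b$ by $-(-1)^{a+b}$. Likewise $\sigma_2$ becomes $x^ay^b\mapsto -x^{3-a}y^{3-b}$. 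So on $V_{\lambda,\mu}$ the genuine $\sigma_1,\sigma_2,\sigma_3$ act by $-\lambda,-\mu,\lambda\mu$. The trivial character is therefore $V_{-1,-1}$; indeed it contains $(x-y)^3$, the tripled diagonal, which is fixed by all of $\Delta\mathrm{PGL}_2$. By contrast, $V_{1,1}$ carries the character that is trivial exactly on $\sigma_3$.

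Concretely, take the $3$-cycle $g(x)=(x-i)/(x+i)$ of Proposition~\ref{S_3}. The form $1+x^3y^3$ pulls back to
\[
(x+i)^3(y+i)^3+(x-i)^3(y-i)^3=2\bigl[(x^3y^3-1)-9(x^2y^2-xy)+3(x^2-xy^3)+3(y^2-x^3y)\bigr]\in V_{1,-1},
\]
so $\wt\Sigma$ does not stabilize $\sM^\Sigma_{1,1}$. Running your equivariance argument with the correct normalization shows instead that $\wt\Sigma/\Sigma$ fixes $\sM^\Sigma_{-1,-1}$ and permutes the other three components transitively. The stabilizer of $\sN=\sM^\Sigma_{1,1}$ is only the transposition fixing $\sigma_3$, i.e.\ the class of $x\mapsto (x+1)/(x-1)$. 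The paper's proof simply calls this clause clear, so the error is in the statement, not something you failed to supply.

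What can be proved is weaker, and it is what the symmetry argument in Proposition~\ref{prop:jacobian-decomposition-N} actually uses. Since $W^2/\Sigma$ acts simply transitively on the four components, you can follow each element of $\wt\Sigma$ by the unique element of $W^2/\Sigma$ that carries the image of $\sN$ back to $\sN$. This gives a subgroup of $(W^2\cdot\wt\Sigma)/\Sigma$, isomorphic to $S_3$, that stabilizes $\sN$. Because $W^2$ is abelian, it centralizes $\Sigma$, so this subgroup still permutes $\sigma_1,\sigma_2,\sigma_3$ through all of $S_3$. Your Step 3 should be amended to this statement, or to the correct fixed component $\sM^\Sigma_{-1,-1}$, rather than the predicted invariance of $1+x^3y^3$.
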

\begin{proof} For any field $k$, 
	consider the equation defining $C\in \sM^\Sigma(k)$:
	$$0=f(x, y)=\sum_{0\leq i, j\leq 3}a_{ij}x^i y^j, \qquad a_{ij}\in k$$
	Thus $\sigma_i$ acts on $C$ as follows: 
	
	$$\sigma_1: (x, y)\mapsto (-x, -y);$$
	$$\sigma_2: (x, y)\mapsto (1/x, 1/y);$$
	$$\sigma_3: (x, y)\mapsto (-1/x, -1/y);$$
	
	\begin{itemize}
		\item The $\sigma_1$ stabilizes $C$ if and only if there is a $\lambda\in \BC^\times$ such that 
		$$f(x, y)=\lambda f(-x, -y).$$ 
		Applying $\sigma_1$ one more time we have $f(x,y)=\lambda^2 f(x, y)$. Thus $\lambda=\pm1$.
		Thus, when $\lambda=1$,  $a_{i j}= 0$ if $i+j$ is even, and when $\lambda =-1$, $a_{ij}=0$ if $i+j$ is odd.
				\item  The $\sigma_2$ stabilizes $C$ if and only if 
		$$f(x, y)=\mu f(1/x, 1/y)x^3y^3, \qquad \mu\in \BC^\times.$$ 
		Applying $\sigma_2$ one more time we have $f(x,y)=\mu^2 f(x, y)$. Thus $\mu=\pm1$.
		Then $a_{ij}=\mu a_{3-i, 3-j}$.
		\item The $\sigma_3$ stabilizes $C$ if and only if  $\lambda\mu=\pm1$. Let $a, b, c, d$ be constants. Thus we have four types of equations corresponding to $(\lambda, \mu)\in \{\pm 1\}^2$ with equation given as follows: 		$$C_{1, \pm 1}: a(1\pm x^3y^3)+b(xy\pm x^2y^2)+c(x^2\pm xy^3)+d(y^2\pm x^3y)=0,$$
		$$C_{-1, \pm 1}: a(x\pm x^2y^3)+b(y\pm x^3y^2)+c(x^3\pm y^3)+d(xy^2\pm x^2y)=0.$$
		\end{itemize}
		
These show that $\sM^\Sigma$ has four components as in the statement of the proposition. 
	These families can be translated into each other by the subgroup $ W^2/\Sigma$.
	
	In fact, since every $\sigma_i$ stabilizes $C_{1, \pm 1}, C_{-1, \pm 1}$, we need only to show that these families can be translated by  $\sigma_i^1:=(\omega_i, \Id), i=1, 2.$ Denote the action of $\sigma_i^1$ on $C$ by $\sigma_i^1\cdot C$. By calculation we have
	$$\sigma_1^1\cdot C_{1, 1}=C_{1, -1},\qquad \sigma_1^1\cdot C_{-1,  1}=C_{-1,  -1};$$
	$$\sigma_2^1\cdot C_{1, \pm1}=C_{-1, \pm1},\qquad \sigma_2^1\cdot C_{-1,  \pm1}=C_{1,  \pm1}.$$
	
	The last statement about the action of $\wt\Sigma/\Sigma$ is clear. 
\end{proof}

Let $\sN$ denote $\sM^\Sigma_{1, 1}$, then $\sN$ is isomorphic to an open subset of $\BP^3$ over $\BQ$. Let $k$ denote the function field of $\BP^3$:
namely $k=\BQ(b/a, c/a, d/a)$. 

Let
$$
\pi : \sC:=\sC_{1,1}\longrightarrow \sN
$$
be the universal family, and let
$$
\sJ \longrightarrow \sN
$$
be the relative Jacobian. As in the previous sections, the canonical quadratic point determines a section
$$
\xi : \sN \longrightarrow \sJ.
$$

One of our goals in this paper is to prove that  the canonical quadratic point $\xi$ is big on $\sN$.

\subsection{Quotient curves}

The involutions on the fibers of $\pi$ give rise to several quotient curves. These quotients are the basic geometric input for the decomposition of the Jacobian.
\begin{lemma}\label{A_2}
	Let $C$ be the generic fiber of $\pi$. Let 
	$$
	X_i := C/\sigma_i, \qquad i=1,2,3.
	$$
	Then each $X_i$ is a hyperelliptic curve of genus $2$.
\end{lemma}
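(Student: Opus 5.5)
We compute the genus of each quotient $X_i = C/\sigma_i$ by Riemann--Hurwitz and then show it is hyperelliptic. Since $C$ has genus $4$ and $\sigma_i$ is an involution, Riemann--Hurwitz gives
$$6 = 2g(C)-2 = 2\bigl(2g(X_i)-2\bigr) + r_i,$$
where $r_i$ is the number of fixed points of $\sigma_i$ on $C$. So $g(X_i)=2$ is equivalent to $r_i=2$, and the plan is to count fixed points.

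The fixed locus of $\sigma_i=(\omega_i,\omega_i)$ on $\BP^1\times\BP^1$ is the product of the fixed loci of $\omega_i$. Each $\omega_i$ has exactly two fixed points on $\BP^1$: $\{0,\infty\}$ for $\omega_1$, $\{\pm1\}$ for $\omega_2$, and $\{\pm i\}$ for $\omega_3$. Hence $\sigma_i$ has exactly four fixed points on $\BP^1\times\BP^1$, and we must check which of them lie on the generic curve
$$f = a(1+x^3y^3)+b(xy+x^2y^2)+c(x^2+xy^3)+d(y^2+x^3y)=0.$$

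\textbf{Case $\sigma_1$.} Evaluate at $(0,0)$, $(\infty,\infty)$, $(0,\infty)$, $(\infty,0)$, using the bihomogenized form at the points at infinity:
\begin{itemize}
\item at $(0,0)$ we get $f=a\neq0$;
\item at $(\infty,\infty)$ the leading coefficient of $x^3y^3$ is $a\neq 0$;
\item at $(0,\infty)$ we need the coefficient of $x^0y^3$, which is $0$, so the point lies on $C$;
\item at $(\infty,0)$ the coefficient of $x^3y^0$ is $0$, so this point also lies on $C$.
\end{itemize}
Hence $r_1=2$.

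\textbf{Case $\sigma_2$.} At $(\pm1,\pm1)$ we get $f(1,1)=2(a+b+c+d)$ and $f(-1,-1)=2(a+b-c-d)$, both nonzero generically. At the mixed points, $f(1,-1)$ and $f(-1,1)$ both vanish identically: for instance $f(1,-1)=a(1-1)+b(-1+1)+c(1-1)+d(1-1)=0$. Hence $r_2=2$.

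\textbf{Case $\sigma_3$.} This reduces to the $\sigma_2$ computation by evaluating at $(\pm i,\pm i)$. Alternatively, conjugate by an element of $\widetilde\Sigma$ (Proposition~\ref{S_3}) that stabilizes $\sN$ and permutes the $\sigma_i$ (Proposition~\ref{prop-lm}); this transports the statement for $\sigma_2$ to $\sigma_3$.

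One point needs care: Riemann--Hurwitz requires $C$ to be smooth at these fixed points, but that holds because $C$ is the smooth generic fiber. So $g(X_i)=2$ for each $i$.

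Hyperellipticity is then automatic, since every curve of genus $2$ is hyperelliptic via its canonical map. If one wants it explicitly, the hyperelliptic involution of $X_i$ is induced by $\sigma_j$ for $j\neq i$, because $\Sigma$ is abelian and so $\sigma_j$ descends to $X_i$.

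The main obstacle is the careful fixed-point bookkeeping at the points at infinity, where one must work in the correct affine chart of $\BP^1\times\BP^1$ with bidegree $(3,3)$. One must also make sure that the points not on $C$ are excluded for the generic $(a,b,c,d)$ and not just for special parameter values.
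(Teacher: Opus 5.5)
Your proof is correct and follows the paper's route: Riemann--Hurwitz with $r_i=2$, then genus $2$ implies hyperelliptic. Your fixed-point count supplies the justification the paper leaves out, and the count is in fact forced, since $r_i=10-4g(X_i)\in\{2,6,10\}$ while $\sigma_i$ has only four fixed points on $\BP^1\times\BP^1$.

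Three asides need repair:
\begin{enumerate}
\item $f(-1,-1)=f(1,1)=2(a+b+c+d)$, not $2(a+b-c-d)$.
\item For $\sigma_3$ it is the diagonal points $(i,i),(-i,-i)$ that lie on every $C$, while $f(\pm i,\mp i)=2(a+b-c-d)$. By contrast, $\sigma_2$ fixes the off-diagonal points $(\pm1,\mp1)$ of $C$. Every element of $\widetilde\Sigma$ acts diagonally and so preserves this diagonal/off-diagonal distinction. Hence no element of $\widetilde\Sigma$ carrying $\sigma_2$ to $\sigma_3$ can preserve $\sN$, despite the stabilization claim in Proposition~\ref{prop-lm}. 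Only your direct evaluation at $(\pm i,\pm i)$ is valid.
\item The involution that $\sigma_j$ induces on $X_i$ is not the hyperelliptic one. Riemann--Hurwitz for $C\to C/\Sigma$ reads $6=4(2g_0-2)+(r_1+r_2+r_3)$, giving $g_0=1$; this is the elliptic curve $E_0$ the paper uses later. Hyperellipticity should rest only on the canonical map of a genus-$2$ curve, as in your main argument.
\end{enumerate}
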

\begin{proof}
	Let
	$$
	f_i:C\longrightarrow X_i:=C/\sigma_i
	$$
	be the quotient map. Since $\sigma_i$ is an involution, $f_i$ has degree $2$. By the Riemann-Hurwitz formula,
	$$
	2g(C)-2 = 2\bigl(2g(X_i)-2\bigr)+\sum_{P}(e_p-1)
	$$
	where $P$ runs over the ramification points, in this case they are the fixed points.
	Since $g(C)=4$ and $\sigma_i$ has exactly $2$ fixed points, we obtain
$$
	6=2\bigl(2g(X_i)-2\bigr)+2.
$$
	Hence
$$
	g(X_i)=2.
$$
	Therefore $X_i$ is a  hyperelliptic curve of genus 2.
\end{proof}

More precisely,  recall that $C$ is defined by the  affine equation
$$F(x, y):=a(1+x^3y^3)+b(xy+x^2y^2)+c(x^2+xy^3)+d(y^2+x^3y)=0.$$
An affine  equation for $X_1$ is obtained by the substitutions
$u=xy,  v=x^2$:
\begin{eqnarray}
	a\left(1+u^{3}\right)+b\left(u+u^{2}\right)+c\left(v+\frac{u^{3}}{v}\right)+d\left(u^{2} / v+u v\right)=0 
\end{eqnarray}
Multiplying both sides by $v$, we have 
$$a \left(\left(1+u^{3}\right)+b\left(u+u^{2}\right)\right) v+(c+d u) v^{2}+\left(u^{3} c+d u^{2}\right)=0  $$
This is a quadratic function of $v$. Set 
$$w=2(c+d u) v+a(1+u^3)+b(u+u^{2}) =c(x^{2}-x y^{3})+d  (x^{3} y-y^{2}),$$
we have an affine equation for $X_1$:
\begin{align*}w^2=&\left(a\left(1+u^{3}\right)+b u(1+u)\right)^{2}-4(c+d u)(c u+d) u^{2}\\
=&a^2(u^6+1) + 2ab(u^5+u) + (b^2+2ab-4dc)(u^4+u^2)
+ (2a^2+2b^2-4c^2-4d^2)u^3
\end{align*}

Because the involutions $\sigma_1,\sigma_2,\sigma_3$ generate a Klein four subgroup, for each $i$, the involution $\sigma_j$ for $j\neq i$ descends to an involution on $X_i$. This descended involution is different from the hyperelliptic involution of $X_i$. Thus $X_i$ carries a non-hyperelliptic involution $\tau_i$ and a hyperelliptic involution $\iota_i$.

	Let $\sK$ denote the moduli space of pairs $(X, \tau)$ where $X$ is a genus $2$ hyperelliptic curve, and $\tau\in\Aut (X)$
	is a non-hyperelliptic involution. Then, for each $i$, we have defined a map
	$$
	\pi_i : \sN \longrightarrow \sK, \qquad [C] \longmapsto [X_i, \tau_i].
	$$

	\begin{lem}\label{lem-K}
	The moduli space $\sK$ is two-dimensional with universal hyperelliptic curve and involution given by the equation:
	$$
y^2 = A(x^6+1) + B(x^5+x) + C(x^4+x^2) + Dx^3, \qquad \tau (x, y)=(1/x,  y/x^3).
$$
This equation is unique up to automorphism 
$$x\mapsto \frac{ax+b}{bx+a}, \qquad a, b\in k, a^2\ne b^2.$$
\end{lem}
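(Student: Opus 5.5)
The plan is to normalize the pair $(X,\tau)$ by putting the involution in standard form, and then to read off the equation and its residual symmetries.

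\textbf{Step 1: normalizing $\tau$.} Let $X$ be a genus $2$ curve with hyperelliptic involution $\iota$. Since $\iota$ is central in $\Aut(X)$, any non-hyperelliptic involution $\tau$ descends to an involution $\bar\tau$ of $\BP^1 = X/\iota$. The descended involution $\bar\tau$ is nontrivial; otherwise $\tau\in\{1,\iota\}$. An involution of $\BP^1$ has two fixed points, and over $\bar k$ (or after a quadratic extension) we may move them to $\pm 1$. Then $\bar\tau(x)=1/x$.

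\textbf{Step 2: the shape of the equation.} Write $X: y^2=f(x)$ with $\deg f\le 6$ and $f$ having distinct roots. The branch locus is $\bar\tau$-stable, so $x^6 f(1/x)=\lambda f(x)$ for some $\lambda$. Lifting $\bar\tau$ gives $\tau(x,y)=(1/x,\ c\,y/x^3)$ with $c^2=\lambda$. Rescaling $y$ (equivalently, replacing $f$ by a scalar multiple) we may take $\lambda=1$ and $c=\pm1$. The two lifts differ by $\iota$, so we may choose $\tau(x,y)=(1/x,y/x^3)$.

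Next we rule out the case $\lambda=-1$. In that case $f$ is anti-palindromic, so $f(1)=0$, and after the rescaling $y\mapsto \sqrt{-1}\,y$ the equation becomes palindromic anyway. Hence it suffices to take $f$ palindromic,
$$f(x)=A(x^6+1)+B(x^5+x)+C(x^4+x^2)+Dx^3.$$
The parameters $[A:B:C:D]$ then range over a $3$-dimensional projective space. We remove the discriminant locus and quotient by the residual symmetries, which we identify next.

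\textbf{Step 3: residual symmetries and dimension.} The normalizer of $\bar\tau$ in $\mathrm{PGL}_2$ consists of the maps commuting with $x\mapsto 1/x$. A direct computation, as in the centralizer computation in the proof of Proposition~\ref{S_3}, shows these are $x\mapsto (ax+b)/(bx+a)$ and $x\mapsto (ax+b)/(-bx-a)$; the latter is the composite of the former with $1/x$, so it adds nothing new. The maps $x\mapsto (ax+b)/(bx+a)$ form a $1$-dimensional group isomorphic to $\mathbb{G}_m$ (fixing $\pm1$). Each such map preserves palindromicity up to the factor $(bx+a)^6$ and commutes with $\tau$.

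Thus $\sK$ is the quotient of an open subset of $\BP^3$ by a $1$-dimensional group acting with finite stabilizers generically. This gives $\dim\sK = 3-1 = 2$. Uniqueness up to these automorphisms holds because any two normal forms for isomorphic pairs $(X,\tau)$ differ by an isomorphism of $\BP^1$ conjugating $\bar\tau$ to $\bar\tau$. Such an isomorphism lies in the normalizer just computed, up to the scalar action on $y$.

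\textbf{Main obstacle.} The delicate point is the bookkeeping in Step 2: the sign $\lambda$, the choice of lift of $\bar\tau$, and the rational rather than geometric normalization of the fixed points to $\pm1$. The dimension count itself is routine. To check that the generic stabilizer is finite, I would exhibit one curve with finite automorphism group.
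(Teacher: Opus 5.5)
Your strategy is the paper's: descend $\tau$ to $\BP^1$, move $\bar\tau$ to $x\mapsto 1/x$, read off a palindromic sextic, then identify the residual symmetries. Your count $\dim\sK=3-1=2$ is a useful addition that the paper leaves implicit. Finiteness of stabilizers is in fact automatic, since only finitely many M\"obius maps preserve six distinct points. However, the sign bookkeeping in Steps 2 and 3, which you yourself flag as the delicate point, fails as written.

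First, $\lambda$ cannot be normalized by rescaling. The relation $x^6f(1/x)=\lambda f(x)$ is homogeneous in $f$, and $y\mapsto\sqrt{-1}\,y$ replaces $f$ by $-f$, which is still anti-palindromic. In fact $\lambda=-1$ is an intrinsic condition: it forces $f(\pm1)=0$, i.e.\ the fixed points of $\bar\tau$ are branch points. What excludes it is the hypothesis that $\tau$ is an involution. From your own formula, $\tau^2(x,y)=(x,c^2y)$, so $\tau^2=\mathrm{id}$ gives $c^2=1$ and hence $\lambda=c^2=1$. When $\lambda=-1$, the lifts of $\bar\tau$ have order $4$ with square $\iota$.

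Second, you may not replace $\tau$ by the other lift $\iota\tau$. The involution is part of the data, and $(X,\tau)$ and $(X,\iota\tau)$ are in general different points of $\sK$: their quotients $E_+$ and $E_-$ in Proposition~\ref{lem:K to A1A1} are swapped. Rescaling $y$ does not change $c$ either. The sign must be normalized by a coordinate change, namely $x\mapsto -x$. This map commutes with $x\mapsto 1/x$ and turns $\tau(x,y)=(1/x,-y/x^3)$ into $(1/x,y/x^3)$; this is exactly what the paper does.

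Relatedly, in Step 3 the maps $x\mapsto (ax+b)/(-bx-a)$ are the composites of $(ax+b)/(bx+a)$ with $x\mapsto -x$, not with $1/x$. Note that $1/x$ already lies in the first family, with $a=0$, $b=1$. So these maps form a genuinely second component of the centralizer of $\bar\tau$, and it is the very component needed above. It drops out of the uniqueness statement not because it is redundant, but because it flips the sign of $c$. By contrast, $(x,y)\mapsto\bigl(\frac{ax+b}{bx+a},\frac{y}{(bx+a)^3}\bigr)$ preserves $\tau(x,y)=(1/x,y/x^3)$. Your uniqueness argument needs this check to land on the stated group.
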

\begin{proof}
Let $k$ be a field of characteristic $0$ and $(X, \tau)$ a point of $ \sK (k)$. Let $\pi: X\lra \BP^1$ be a double cover defined by the hyperelliptic involution. 
Then $\pi$ is  defined by the space of sections $\Gamma (X, \Omega _X^1)$. As $\tau ^*\Omega^1 _X\cong\Omega^1_X$, we see that $\tau$ induces an involution on $\BP^1$ denoted by $\sigma$ so that $\sigma\circ \pi=\pi\circ \tau$. This shows that the ramification points of the covering $\pi$ are invariant under $\sigma$. After conjugating by an automorphism of $\BP^1$, we may assume that $\sigma$ is given by $x\mapsto 1/x$. This coordinate is unique up to isomorphism
$$x\mapsto \frac {ax+b}{bx+a}.$$Thus, the equation of $X$ and the involution must have the form
$$
y^2 = A(x^6+1) + B(x^5+x) + C(x^4+x^2) + Dx^3, \qquad \tau (x, y)=(1/x, \pm y/x^3).
$$
The two involutions can be interchanged by the substitution $x\mapsto -x$. This proves the first statement in the Theorem. For the second statement, we notice that the equation is unique up to substitution
$$(x, y)\mapsto \left(\frac {ax+b}{bx+a}, \frac y{(bx+a)^3}\right).$$
\end{proof}

\begin{prop}\label{lem:N to K}
		The above map $\pi_i$	is dominant.
\end{prop}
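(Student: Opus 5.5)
By the symmetry in Proposition \ref{prop-lm}, it suffices to treat $\pi_1$. The group $\wt\Sigma/\Sigma\cong S_3$ stabilizes $\sN=\sN_{1,1}$ and permutes the $\sigma_i$ by Proposition \ref{S_3}. It therefore intertwines $\pi_1,\pi_2,\pi_3$ up to automorphisms of $\sN$ and $\sK$, so dominance of $\pi_1$ implies dominance of the other two. Since $\sK$ is irreducible of dimension $2$ by Lemma \ref{lem-K}, I will show that the image of $\pi_1$ is not contained in any curve; equivalently, the differential of $\pi_1$ has rank $2$ at a generic point.

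To do this I use the explicit model computed above. With $u=xy$ and $w$ as defined there, $X_1$ is given by
$$w^2=A(u^6+1)+B(u^5+u)+C(u^4+u^2)+Du^3,$$
where
$$A=a^2,\quad B=2ab,\quad C=b^2+2ab-4cd,\quad D=2a^2+2b^2-4c^2-4d^2.$$
The involution $\sigma_2$ descends to $u\mapsto 1/u$, which matches the normal form of Lemma \ref{lem-K}; I will check that $w\mapsto w/u^3$ with the appropriate sign. This gives a rational map
$$\sN\dashrightarrow \BP^3_{[A:B:C:D]},\qquad [a:b:c:d]\mapsto [a^2:2ab:b^2+2ab-4cd:2a^2+2b^2-4c^2-4d^2],$$
whose image lands in the locus of invariant sextics, and $\sK$ is the quotient of an open subset of this locus by the one-dimensional group of substitutions $u\mapsto (\alpha u+\beta)/(\beta u+\alpha)$.

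The key step is to show that this map $\sN\dashrightarrow\BP^3$ is dominant, or at least generically finite onto its image, which has dimension $3$. Normalize $a=1$. Then $b$ is determined by $B$, the product $cd$ by $C$, and $c^2+d^2$ by $D$. The values $cd$ and $c^2+d^2$ determine $\{c,d\}$ up to finitely many choices, so the map has finite fibers. It follows that the image in $\BP^3$ is $3$-dimensional and hence dense. Composing with the quotient $\BP^3\dashrightarrow\sK$, which is dominant with one-dimensional fibers, shows that $\pi_1$ is dominant.

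The main obstacle is the identification of $\sK$ with the quotient of the space of invariant sextics $[A:B:C:D]$ by the one-parameter group. Concretely, I need to confirm that a generic orbit is $1$-dimensional, so that the quotient really is $2$-dimensional. I also need to confirm that the coefficients attached to $X_1$ are exactly these, with no hidden restriction. For instance, the sign of the $\tau$-action must be consistent, since Lemma \ref{lem-K} identifies the two sign choices via $x\mapsto -x$. If the quotient description is delicate, a fallback is to compute the Igusa invariants, or the invariants of the sextic restricted to $\tau$-invariant curves, as functions of $(b,c,d)$. One would then check that the Jacobian matrix of two independent such invariants has rank $2$ at one explicit rational point, for example $b=0$, $c=1$, $d=2$.
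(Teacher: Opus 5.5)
Your proposal is correct and is essentially the paper's proof: the paper also reduces dominance to showing that the coefficient map $(a,b,c,d)\mapsto(a^2,\,2ab,\,b^2+2ab-4cd,\,2a^2+2b^2-4c^2-4d^2)$ reaches a general $(A,B,C,D)$, solving successively for $a$, $b$, $cd$ and $c^2+d^2$, and then appeals to the normal form of Lemma~\ref{lem-K}; your finite-fibre argument is the same computation. The sign you flag is harmless, and the dimension of the generic orbit plays no role in dominance, since Lemma~\ref{lem-K} already gives that every $(X,\tau)$ has such a model. Indeed $\sigma_2$ descends to $(u,w)\mapsto(1/u,-w/u^3)$, and the substitution $u\mapsto -u$ from Lemma~\ref{lem-K} turns this into the normal form; on coefficients this is the automorphism $[A:B:C:D]\mapsto[A:-B:C:-D]$ of $\BP^3$, which does not affect dominance.
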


\begin{proof}
Since the quotient $X_1$ has the equation:
$$
y^2 = a^2(x^6+1) + 2ab(x^5+x) + (b^2+2ab-4dc)(x^4+x^2)
+ (2a^2+2b^2-4c^2-4d^2)x^3
$$
It suffices to show that for any general $A, B, C, D\in \bar \BQ$, there are $a, b, c, d\in \bar \BQ$
such that 
$$A=a^2, \quad B=2ab, \quad C=(b^2+2ab-4dc), \quad D=(2a^2+2b^2-4c^2-4d^2).$$
This follows immediately.
\end{proof}

\begin{prop}\label{lem:K to A1A1}
For a point $[X]\in \sK$, let $\iota$ be the hyperelliptic involution and let $\tau$ be the given non-hyperelliptic involution. Then the two quotients
	\[
	E_+:=X/\langle\tau\rangle,\qquad E_{-}:=X/\langle \iota_X\circ\tau\rangle
	\]
	are elliptic curves. The induced map
$$
	f: \sK\to \BA^1\times\BA^1,\qquad [X]\mapsto \bigl([E_+],[E_-]\bigr)
$$
	is surjective.
\end{prop}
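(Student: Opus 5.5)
The plan is to make both quotients explicit through the normal form of Lemma~\ref{lem-K} and the invariant coordinate $t=x+x^{-1}$, and then reduce surjectivity to a statement about configurations of points on $\BP^1$.

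\textbf{Explicit quotients.} Take $X: y^2=A(x^6+1)+B(x^5+x)+C(x^4+x^2)+Dx^3$ with $\tau(x,y)=(1/x,y/x^3)$, and put $t=x+x^{-1}$. Dividing by $x^3$ gives
$$\frac{y^2}{x^3}=A(t^3-3t)+B(t^2-2)+Ct+D=:P(t).$$
Next set $w_\pm := y(x\pm1)/x^2$. A direct check shows that $w_+$ is $\tau$-invariant and $w_-$ is $\iota\circ\tau$-invariant. Moreover
$$w_\pm^2=\frac{y^2}{x^3}\cdot\frac{(x\pm1)^2}{x}=(t\pm2)P(t).$$
So $E_+$ is birational to $w^2=(t+2)P(t)$ and $E_-$ is birational to $w^2=(t-2)P(t)$.

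These are genus one curves as soon as $P$ has three distinct roots $r_1,r_2,r_3$, all different from $\pm2$. This matches Riemann--Hurwitz: $\tau$ fixes exactly the two points over $x=1$, so $2=2(2g(E_+)-2)+2$ forces $g(E_+)=1$. Similarly $\iota\circ\tau$ fixes exactly the two points over $x=-1$. The same root condition says that the six roots of $x^2-r_ix+1$ are distinct, so it is exactly the smoothness condition on $X$.

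\textbf{Reduction to cross-ratios.} The map $(A,B,C,D)\mapsto P$ is a linear isomorphism onto cubics in $t$, since $t^3-3t,\ t^2-2,\ t,\ 1$ form a basis. Hence $P$ can be any cubic, i.e.\ the roots $r_1,r_2,r_3$ can be any three distinct points of $\BP^1\setminus\{\pm2\}$; a root at $\infty$ means $A=0$, where $X$ acquires a branch point at infinity, and this is allowed.

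The $j$-invariant of $E_+$ is the $j$-function of the cross-ratio $\lambda_+$ of $(-2;r_1,r_2,r_3)$. Likewise $j(E_-)$ is the $j$-function of the cross-ratio $\lambda_-$ of $(2;r_1,r_2,r_3)$.

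\textbf{Surjectivity.} Given $(j_1,j_2)\in\BA^1\times\BA^1$, the construction proceeds in three steps.
\begin{enumerate}[(i)]
\item Normalize $(r_1,r_2,r_3)=(0,1,\infty)$.
\item Choose $\lambda_1,\lambda_2\in\BP^1\setminus\{0,1,\infty\}$ with $j(\lambda_k)=j_k$ and $\lambda_1\neq\lambda_2$. This is possible because each fibre of $\lambda\mapsto j(\lambda)$ has at least two points: six in general, three over $1728$, two over $0$.
\item Apply the unique Möbius transformation $\gamma$ sending $\lambda_1\mapsto-2$, $\lambda_2\mapsto 2$ and fixing a third auxiliary point. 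Take $r_i:=\gamma(0),\gamma(1),\gamma(\infty)$. These are distinct and avoid $\pm2$ because $\lambda_k\notin\{0,1,\infty\}$.
\end{enumerate}
The resulting $P$, and hence $(A,B,C,D)$ and $X$, satisfy $f([X])=(j_1,j_2)$.

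The only delicate point I expect is the case $j_1=j_2$, where one needs $\lambda_1\ne\lambda_2$ inside the same fibre; this is exactly where the fibre-size count in step (ii) is used. Checking that $\tau$ is then a non-hyperelliptic involution, so that $[X]\in\sK$, is immediate because $\tau$ has fixed points while $\iota$ acts as $-1$ on differentials.
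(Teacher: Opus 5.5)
Your proposal is correct and is essentially the paper's own argument. You use the same invariants $t=x+x^{-1}$ and $w_\pm=y(x^{-1}\pm x^{-2})$ to get $E_\pm\colon w^2=(t\pm 2)P(t)$, and surjectivity comes from a M\"obius transformation sending $\lambda_1\neq\lambda_2$ to the two points $\pm 2$. Your version is if anything more careful than the paper's: it uses a complex M\"obius map rather than $g\in\SL_2(\BR)$, keeps the signs consistent, and allows $A=0$.

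The only slip is in your last line. Having fixed points does not distinguish $\tau$ from $\iota$, which has six of them. Instead, note that $\tau$ induces the nontrivial map $x\mapsto 1/x$ on $\BP^1$, or that $X/\langle\tau\rangle$ has genus $1$.
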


\begin{proof}
To prove that $f$ is dominant, it suffices to show it is surjective over $\BC$. 
	Let $X$ be a genus $2$ curve with a non-hyperelliptic involution $\tau$. Since $X$ is hyperelliptic, it admits a unique hyperelliptic involution $\iota$. The involutions $\tau$ and $\iota\circ\tau$ are distinct involutions on $X$, and neither is equal to $\iota$.	By Lemma \ref{lem-K}, 
	we write $X$ in terms of its 6 ramification points 
	on $\BP^1(\BC)$ stable under involution $x\mapsto x^{-1}$.
	$$y^2=\prod _{i=1}^3 (x-\rho _i)(x-\rho _i^{-1})=\prod _{i=1}^3 (x^2-\alpha _ix+1).$$
	where $\alpha _i=\rho _i+\rho _i^{-1}$.	
		The invariants in $k(X)$ under these involutions are generated by 
	$$u:=x+1/x, \qquad v:=y(x^{-1}\pm x^{-2}).$$
	Using these invariants, we have the equation of two quotient elliptic curves:
	$$E_{\pm}: v^2=x^3(x^{-1}\pm x^{-2})^2\prod _{i=1}^3 (x^2-\alpha _ix+1)=(u\pm 2)\prod_{i=1}^3(u-\alpha _i).$$
	
	Now let $E_1, E_2$ be two elliptic curves represented in Legendre form:
	$$E_i: y^2=x(x-1)(x-\lambda _i), \qquad \lambda _i\in \BC\setminus\{0, 1\}.$$
	We can always assume that $\lambda _1\ne \lambda _2$ even in the case $E_1=E_2$ as we can change $\lambda _i$ to $1/\lambda _i$, $1-\lambda _i$, etc. Now choose an element $g\in \SL_2(\BR)$ so that 
	$g\lambda _1=2$ and $ g\lambda _2=-2$.
	Then $E_1\simeq E_+, E_2\simeq E_-$ with 
	$$\alpha _1=g(0), \qquad \alpha _2=g(1), \qquad \alpha _3=g (\infty).$$
\end{proof}

\subsection{Decomposition of the Jacobian}

For a curve $[C]\in \sN$, we have  3 quotients $X_i=C/\sigma _i$ of genus $2$, and further 6 quotients of genus $1$: $X_i/\tau_i$, $X_i/\tau_i\iota_i$.
The quotient $X_i/\tau_i=C/(\sigma_1,\sigma_2)$ does not depend on $i$. Thus we denote four elliptic curves:
$$E_0=C/(\sigma _1, \sigma _2)=X_i/\tau_i, \qquad E_1=X_i/\tau_i\iota _i, \quad (i=1,2, 3.)$$
In this way, we obtain an isogeny
$$\psi: \Jac(C)\to E_0\times E_1\times E_2\times E_3.$$
The $j$-invariants induce a map:
	$$J: \sN\to \BA^4, \qquad [C]\mapsto (j(E_0), j(E_1), j(E_2), j(E_3)).$$

\begin{prop}
	\label{prop:jacobian-decomposition-N}
	The map $J$ is finite. Moreover, 
	 for general $C \in N$, the elliptic curves $E_0,E_1,E_2,E_3$ are pairwise non-isogenous.
\end{prop}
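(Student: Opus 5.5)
Both claims reduce to a dimension count: $\dim \sN = 3$, and we need the image of $J$ to have dimension $3$ with generic fibres finite. We also need to rule out generic isogenies among the $E_i$.

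\textbf{Finiteness and generic quasi-finiteness.} Write $J$ as a composite through the maps already constructed. For each $i$, Proposition~\ref{lem:K to A1A1} gives $f\circ\pi_i : \sN\to\BA^2$, $[C]\mapsto (j(E_0),j(E_i))$. By Proposition~\ref{lem:N to K} and the surjectivity of $f$, this map is dominant. Hence every pair of coordinates $(j_0,j_i)$ of $J$ is algebraically independent on $\sN$.

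To obtain $\dim J(\sN)=3$, I would compute the four elliptic curves explicitly in the coordinates $[a,b,c,d]$. The affine equation for $X_1$ is already written down. Substituting $u\mapsto u+1/u$ as in the proof of Proposition~\ref{lem:K to A1A1} gives $E_0$ and $E_1$ as $v^2=(u\pm2)\,g_1(u)$, where $g_1$ is an explicit cubic in $u$ with coefficients quadratic in $a,b,c,d$. Doing the same for $X_2,X_3$, after using the $S_3=\wt\Sigma/\Sigma$ symmetry of Proposition~\ref{S_3} to move $\sigma_2,\sigma_3$ to $\sigma_1$, yields $E_2,E_3$. Then I check that the Jacobian matrix of $(j_0,j_1,j_2,j_3)$ with respect to $(b/a,c/a,d/a)$ has rank $3$ at one explicit rational point. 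This is a routine computer-algebra evaluation. It gives generic finiteness, i.e.\ $J$ is generically finite onto a threefold.

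For finiteness proper, I would argue that $J$ is quasi-finite by showing that a positive-dimensional fibre cannot exist. Suppose a complete curve, or a curve going to the boundary, had all four $j$-invariants constant. Then $\Jac(C)$ would be isotrivial along it up to isogeny, so by Torelli $C$ would be constant in $\sM_4$. This contradicts the finiteness of $\sN\to\sM_4$, which holds because $\sM\to\sM_4$ is étale and the locus of curves with a given $\Sigma$-action is finite over its image. Properness over $\BA^4$ would come from the fact that degenerations in $\sN$ (singular curves) force some $E_i$ to degenerate, so some $j_i\to\infty$. This boundary analysis is the step I consider most delicate. If it resists, I would settle for quasi-finiteness together with the generic statement, which is what the later bigness argument actually uses.

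\textbf{Pairwise non-isogeny.} Because the image of $J$ is $3$-dimensional in $\BA^4$, its projection to any two coordinates $(j_k,j_l)$ is dominant onto $\BA^2$. For $k=0$ this follows directly from the first paragraph. For $1\le k<l$ I would use the $S_3$ symmetry permuting $E_1,E_2,E_3$, together with the rank computation restricted to those two coordinates. Each Hecke correspondence $T_N\subset\BA^2$ is a curve, so its preimage in $\sN$ is a proper closed subset. Hence a very general point of $\sN$, outside the countable union of these subsets, has $E_k,E_l$ non-isogenous for all $k\ne l$.

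On the generic fibre this means the $E_i$ are pairwise non-isogenous over $\bar k$. I read ``general'' in the proposition in this sense, as a statement about the generic or very general member.
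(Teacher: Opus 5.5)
\textbf{Finiteness.} This part follows the paper's route. With the $j_i$ fixed, $\Jac(C)$ is a quotient of $E_0\times E_1\times E_2\times E_3$ of bounded degree; the paper uses the dual of $\psi$, of degree at most $4^4$. Hence only finitely many principally polarized abelian varieties occur, Torelli gives finitely many curves, and one then needs finiteness of $\sN$ over $\sM_4$. You should make two steps explicit.
\begin{enumerate}
\item ``Isotrivial up to isogeny'' gives constancy only once you note that the possible quotients have bounded degree (or form a countable set).
\item Finiteness of $\sN\to\sM_4$ is the actual content of the paper's second step, and you only assert it. Fix one of the finitely many embeddings $(\ZZ/2\ZZ)^2\hookrightarrow\Aut(C)$. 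Re-embeddings compatible with it differ by elements of $\Aut(\BP^1\times\BP^1)$ centralizing $\sigma_1,\sigma_2$, and this group is finite.
\end{enumerate}
This argument already gives $\dim\overline{J(\sN)}=3$, so your Jacobian-rank computation is superfluous. The paper's proof also shows only that the fibres of $J$ are finite, not properness, so falling back to quasi-finiteness loses nothing relative to it.

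\textbf{Non-isogeny.} Here there is a gap. Your opening inference, that a $3$-dimensional image in $\BA^4$ projects dominantly onto every coordinate plane, is false. The threefold $\{\Phi_N(j_1,j_2)=0\}$ projects onto a curve in the $(j_1,j_2)$-plane, and that is exactly the situation you must exclude.

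Your case $k=0$ is fine, via Propositions~\ref{lem:N to K} and~\ref{lem:K to A1A1}. For $1\le k<l$, however, you rest on a rank computation you have not carried out.

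The missing idea is to use the $S_3=\wt\Sigma/\Sigma$ symmetry for more than reducing the number of pairs. The closure $\overline{J(\sN)}$ is an irreducible hypersurface $P=0$, stable under permutations of $j_1,j_2,j_3$. If $\Phi_N(j_m,j_n)$ vanished on it, then applying a suitable transposition gives a second relation: $\Phi_N(j_1,j_3)=0$ alongside $\Phi_N(j_1,j_2)=0$, or $\Phi_N(j_0,j_2)=0$ alongside $\Phi_N(j_0,j_1)=0$. The two relations together cut the image down to dimension $2$, contradicting $\dim\overline{J(\sN)}=3$. The paper phrases this as $P$ being a scalar multiple of $\Phi_N(j_m,j_n)$, which is incompatible with the symmetry.

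This handles all six pairs uniformly, with no computation. Your countable-union reading of ``general'' is then correct.
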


\begin{proof}
	Fix $(j_0, j_1, j_2, j_3)\in \BC^4$ and let $E_i$ be elliptic curves with $j$-invariant $j_i$.
	Notice that the dual to $\psi$ defines $\Jac (C)$ as a quotient of $E_0\times E_1\times E_2\times E_3$ degree bounded by $4^4=256$.
	Thus, there are only finitely many possibilities for  $\Jac (C)$, namely a   finite list of principally polarized abelian varieties $A_1, \cdots, A_N$.
	By Torelli's theorem, the isomorphism classes of $C$ lie in a finite list of curves $X_1, \cdots, X_M$.
	It remains to prove that for each curve $X$ over $\BC$ of genus $4$, there are finitely many embeddings  $f: X\lra \BP^1\times \BP^1$
	so that the image is defined by an equation of degree $(3, 3)$ and stable under $\sigma_1, \sigma_2$.
	
	Notice the embedding $f$ is induced by a canonical embedding $C\lra \BP^3$. Thus, all embeddings are in one orbit under the composition
	$\alpha \circ f$ with 
	$$\alpha \in \Aut (\BP^1\times \BP^1)=\Aut (\BP^1)^2\rtimes (\BZ/2\BZ)$$
	where $1\in \BZ/2\BZ$ acts on $\BP^1\times \BP^1$ by switching  two factors which we denote as $\epsilon$.

	Also notice that for each such  embedding $f$, the $\sigma _1, \sigma _2$ induce embeddings $\varphi: (\BZ/2\BZ)^2\lra \Aut (C)$.
	Since $\Aut (C)$ is finite, there are only finitely many such embeddings $\varphi$. Thus, we can fix such $\varphi$ and consider the embeddings 
	$f$ so that $\varphi(1, 0)=\sigma _1$ and $\varphi(0,1)=\sigma_2$.

	For each pair $(X, \varphi)$ if we fixed one embedding $f_0$ as above, all other embeddings are given by composition $f_\alpha:=\alpha\circ f$ with 
	$\alpha\in \Aut (\BP^1\times \BP^1)$ commuting  with $\sigma _1$ and $\sigma _2$. All such $\alpha$ form a subgroup $G$ of $\Aut (\BP^1\times \BP^1)$. We need to show that $G$ is finite. Of course $\epsilon \in G$.
	Thus $G$ is generated by $W$  and $\epsilon$.
This proves the first part of the proposition.

		By the first part, the image of $J$ is a divisor of $\BA^4$. Thus, it is defined by a single irreducible polynomial equation 
	$$J(\sN): P(j_0, j_1, j_2, j_3)=0.$$

By \ref{S_3} and \ref{prop-lm},  we can conclude that  $\wt\Sigma/\Sigma\simeq S_3$ acts on $\sN$ which permutes $\sigma_1, \sigma_2, \sigma_3$. 
	Thus $S_3$ also acts on $J(\sN)$ which permutes $j_1, j_2, j_3$.
	This shows that $P(j_0, j_1, j_2, j_3)$ is symmetric in $j_1, j_2, j_3$.
	
	If for some pair $m<n$, $E_m$ and $E_n$ are isogenous to each other with degree $N$, then $j_m$ and $j_n$ satisfy one equation $\Phi_N(j_m, j_n)=0$. Then $P$ must be a scalar multiple of $\Phi_N$. Thus the polynomial $P$ has only two variables $j_m$ and $j_n$: $P(j_m, j_n)=0$. As none of $j_n$ is constant, a contradiction to the symmetric property of $P$ as above.
\end{proof}

This decomposition is the key reason the criterion in Section~\ref{sec:criterion} becomes effective on $\sN$: any abelian subvariety of $\sJ$ can be analyzed, up to isogeny, in terms of these elliptic factors.

\section{Arithmetic of curves of genus four with  triple involutions}\label{sec:triple}

In this section, we apply Theorem~\ref{thm: criterion} to the $3$-dimensional family of genus $4$ curves admitting triple involutions. 

\subsection{The projection of the canonical quadratic point is non-torsion}

We now study the image of the canonical quadratic point under the projections to the elliptic factors.
Let
$$
p_i : \Jac(C) \longrightarrow E_i
\qquad (i=0,1,2,3)
$$
denote the projections induced by the isogeny of Proposition~\ref{prop:jacobian-decomposition-N}. To show the canonical quadratic point  $\xi$ is non-torsion, it suffices to show that the point $\xi_C$ has nontrivial image in each factor.

\begin{prop}
		\label{prop:nonzero-projections}
	For a generic curve $C \in \sN$, one has
	$$
	p_i(\xi_C) \notin E_{i, \tor}
	\qquad\text{for all } i=0,1,2,3.
	$$
\end{prop}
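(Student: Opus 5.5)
The plan is to reduce the statement to one explicit numerical check per elliptic factor, using specialization.

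\emph{Reduction to a single specialization.} Fix $i$ and suppose $p_i(\xi_C)$ were torsion for the generic curve $C$, say $n\,p_i(\xi_C)=0$ over the function field $k$ (or its quadratic extension $k'$). Then $n\,p_i(\xi)$ is the zero section over a dense open subset of $\sN$, hence over all of $\sN$. Consequently $p_i(\xi_{C_t})$ would be torsion for every point $t\in\sN(\bar\BQ)$. It therefore suffices to exhibit, for each $i$, one explicit point $t\in\sN(\BQ)$ (or over a quadratic field) at which $p_i(\xi_{C_t})$ is a point of infinite order on $E_{i,t}$.

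\emph{Reduction to $i=0,1$ by symmetry.} By Proposition~\ref{S_3} and Proposition~\ref{prop-lm}, the group $\wt\Sigma/\Sigma\simeq S_3$ acts on $\sN$ and permutes $\sigma_1,\sigma_2,\sigma_3$, and hence permutes $E_1,E_2,E_3$ while fixing $E_0$. It acts diagonally on $\BP^1\times\BP^1$, so it preserves the two rulings. It therefore sends $\xi$ to itself (up to the harmless sign ambiguity), compatibly with the projections $p_i$. So it is enough to treat $E_0$ and $E_1$.

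\emph{Explicit description of $p_i(\xi_C)$.} Up to the isogeny factor, $p_i$ is the pushforward $\phi_{i*}$ along the quotient map $\phi_i: C\to E_i$, namely $C\to X_1\to X_1/\tau_1$ for $E_0$ and $C\to X_1\to X_1/\tau_1\iota_1$ for $E_1$. Since $\xi_C$ is independent of the choice of $\infty$, I will take the fibres over a convenient point, for instance $x=1$ and $y=1$ (the fixed points of $\omega_2$), or $x=0$ and $y=0$.
\begin{itemize}
\item On the first ruling, $F(0,y)=a+dy^2$ together with the point at $y=\infty$ gives the three points of $\pi_1^*(0)$.
\item Symmetrically, $F(x,0)=a+cx^2$ together with $x=\infty$ gives $\pi_2^*(0)$.
\end{itemize}
I then push these points through the substitutions $u=xy$, $v=x^2$, and $w$ to $X_1$, and then to $E_\pm$ via $u'=u+1/u$ and $v'=w(u^{-1}\pm u^{-2})$, using the Weierstrass models $v'^2=(u'\pm 2)\prod(u'-\alpha_j)$ from the proof of Proposition~\ref{lem:K to A1A1}. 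This expresses $p_i(\xi_C)$ as an explicit degree-zero divisor $D_i^{(1)}-D_i^{(2)}$ of degree three on $E_i$. Summing with the group law turns it into a single point $Q_i(a,b,c,d)$, defined over a quadratic extension, on a cubic model over $\BQ(a,b,c,d)$.

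\emph{Numerical check, the main obstacle.} The hard step is choosing $(a,b,c,d)\in\BQ^4$ so that all of the following hold:
\begin{itemize}
\item the curve is smooth;
\item the relevant points are defined over a small field;
\item $Q_i$ can be certified non-torsion.
\end{itemize}
To certify non-torsion I would use the standard argument: reduce modulo two primes of good reduction $p,q$, and show that the prime-to-$p$ and prime-to-$q$ parts of the orders of $Q_i$ in $E_i(\BF_p)$ and $E_i(\BF_q)$ are incompatible with a common torsion order. Here one uses that torsion injects under good reduction away from the residue characteristic. Alternatively, one can show that some multiple of $Q_i$ has non-integral coordinates, in the spirit of Nagell--Lutz, or compute a positive canonical height.

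If rational points are awkward, I would instead specialize along a one-parameter subfamily, such as $b=0$ or $c=d$. On such a subfamily $E_i$ becomes a fixed curve or a simpler pencil, and one can argue that $Q_i$ is non-constant. A non-constant section of a constant elliptic curve is non-torsion, which gives the required specialization directly.
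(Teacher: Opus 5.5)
Your specialization reduction is fine, and the symmetry between $E_1,E_2,E_3$ is not where the trouble lies. The plan nevertheless cannot be completed for $i=1,2,3$: the numerical step you call the main obstacle is impossible, because the statement is false there. The missing observation is the one you already use for the $S_3$-symmetry, applied to $\Sigma$ itself. Each $\sigma_i=(\omega_i,\omega_i)$ lies in $\Aut(\BP^1)\times\Aut(\BP^1)$, so it sends fibres of each ruling to fibres of the same ruling and fixes the classes $[\pi_1^*(\infty)]$ and $[\pi_2^*(\infty)]$. Hence $\sigma^*\xi_C=\xi_C$ for every $\sigma\in\Sigma$. The decomposition $\Jac(C)\sim E_0\times E_1\times E_2\times E_3$ is exactly the $\Sigma$-isotypic decomposition: $E_0=C/\Sigma$ is the invariant part, and $E_i=X_i/\tau_i\iota_i$ is the part on which $\sigma_j$ ($j\neq i$) acts by $-1$. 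Concretely, let $f_i:C\to X_i$ and $\rho_i:X_i\to E_i$, and set $\eta=f_{i*}\xi_C$. Then $\eta$ is $\tau_i$-invariant, hence anti-invariant under $\tau_i\iota_i$, since $\iota_i$ acts by $-1$ on $\Jac(X_i)$. So $\rho_i^*\rho_{i*}\eta=\eta+(\tau_i\iota_i)_*\eta=0$, and $p_i(\xi_C)=\rho_{i*}\eta\in\ker\rho_i^*\subseteq E_i[2]$ for \emph{every} $C\in\sN$. No choice of $(a,b,c,d)$ can certify non-torsion.

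Your explicit step would reveal this if carried out. You rightly include $(0,\infty)$ and $(\infty,0)$; the paper's proof overlooks them when it sends all three points over $x=0$ to $P_{E_0}$ and writes $p_0(\xi)=6P_{E_0}$. Take the model $y_0^2=A_0x_0^3+B_0x_0^2+C_0x_0+a^2$ with $x_0=-u/(1+u)^2$ and $y_0=w/(1+u)^3$. Since $\tau_1(u,w)=(1/u,-w/u^3)$ makes $y_0$ anti-invariant, this model is $X_1/\tau_1\iota_1$. On it:
the finite points over $x=0$ go to $P=(0,a)$, and those over $y=0$ go to $-P$;
the point $(0,\infty)$, where $u=-d/c$ and $v=0$, goes to the third intersection of the tangent line $y_0=a+(3a-b)x_0$ at $P$, i.e.\ to $-2P$;
the point $(\infty,0)=\sigma_2(0,\infty)$ goes to $2P$.
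The total is $2P+(-2P)-2(-P)-2P=0$.

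What survives is the case $i=0$. The model with $x_1=u/(u-1)^2$, $y_1=w/(u-1)^3$ is $\tau_1$-invariant, so it is $C/\Sigma=E_0$ (the paper labels it $E_1$). There $(0,\infty)$ and $(\infty,0)$ have the same image, so $p_0(\xi_C)=4P$ with $P$ the image of the finite points over $x=0$. Your specialization argument plus a non-torsion certificate for $P$ then proves that $p_0(\xi_C)$ is non-torsion generically; the paper's point $(4,20)$ on $Y^2=X^3-208X+1168$ at $(a,b,c,d)=(1,1,1,2)$ serves as that certificate.
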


\begin{proof}
	Assume that $C$ has equation
	$$F(x, y)=a(1+x^3y^3)+b(xy+x^2y^2)+c(x^2+ xy^3)+d(y^2+x^3y)=0.$$
		Setting $x=0$ and $y=0$ respectively in the equation of $C$, we obtain 6 points denoted by $P_1, P_2, P_3$ (for $x=0$), and $Q_1, Q_2, Q_3$ (for $y=0$) respectively. The point $\xi_C\in \Jac (C)$ is the class of 
		$$D:=P_1+P_2+P_3-Q_1-Q_2-Q_3.$$
	
	By the explicit constructions in \ref{A_2}, \ref{lem:K to A1A1}, together with the decomposition in Proposition \ref{prop:jacobian-decomposition-N}, we have explicit Weierstrass forms for $E_0$ and $E_1$ as follows:
	
	\begin{lem}
$$
		E_0:	Y^{2} =X^{3}+p_0 X+q_0 ,\qquad E_1:	Y^{2}  =X^{3}+p_1X+q_1,$$
		where the coefficients are related to coefficients of $C$ by the following formulas:
		$$
		p_0 =A_0 C_0-B_0^{2} / 3 \qquad 	q_0=\frac{2 B_0^{3}-9 A_0 B_0 C_0+27 A_0^{2} D_0}{27} ,$$
		$$A_0=4(c-d)^{2}\neq0,\qquad B_0=-4 c d+(3 a-b)^{2},\qquad C_0=2 a(3 a-b),\qquad D_0=a^2.$$
$$
		p_1 =A_1C_1-B_1^{2} / 3 \qquad q_1=\frac{2 B_1^{3}-9 A_1 B_1C_1+27 A_1^{2} D_1}{27} ,$$
		$$A_1=4(a+b)^2-4(c+d)^{2}\neq0,\quad B_1=-4 c d+(a+b)(9a+b),\quad C_1=6a^2+2ab,\quad D_1=a^2.
	$$
	\end{lem}
	\begin{proof}
	 Assume that $C$ has equation
	 $$F(x, y)=a(1+x^3y^3)+b(xy+x^2y^2)+c(x^2+ xy^3)+d(y^2+x^3y)=0.$$
	 By \ref{A_2} the genus 2 curve $X_1=C/\sigma_1$ has equation 
	 \begin{align*}w^2=&\left(a\left(1+u^{3}\right)+b u(1+u)\right)^{2}-4(c+d u)(c u+d) u^{2}\\
	 	=&a^2(u^6+1) + 2ab(u^5+u) + (b^2+2ab-4dc)(u^4+u^2)
	 	+ (2a^2+2b^2-4c^2-4d^2)u^3
	 \end{align*}
 Then we apply the substitution
 $$ x_0:= \frac{-u}{(u+1)^2} =\frac{xy}{(xy+1)^2},\qquad y_0=\frac{w}{(u+1)^3}=\frac{x^2(c+dxy)-y^2(cxy+d)}{(1+xy)^3}$$
 Then we have equation 
 \begin{align*}
 y_0^2=&((3a-b)x_0+a)^2+4((c-d)^2x_0^3+cdx_0^2)\\
 =&A _0x_0^3+B_0x_0^2+C_0x_0+D_0
 \end{align*}
 where
 $$A_0=4(c-d)^{2}\neq0,\qquad B_0=-4 c d+(3 a-b)^{2},\qquad C_0=2 a(3 a-b),\qquad D_0=a^2.$$
Let
$$X=A_0x_0+B_0/3, \qquad Y=A_0y_0,$$  we obtain a Weierstrass equation for $E_0$ as in the lemma. 

For $E_1$, similarly we will apply the substitution
 $$ x_1:= \frac{u}{(u-1)^2} =\frac{xy}{(xy-1)^2},\qquad y_1=\frac{w}{(u-1)^3}=\frac{x^2(c+dxy)-y^2(cxy+d)}{(xy-1)^3}$$
 to obtain an equation for $E_1$:
 $$y_1^2=A_1x_1^3+B_1x_1^2+C_1x_1+D'$$
 with 
 $$A_1=4(a+b)^2-4(c+d)^{2}\neq0,\quad B_1=-4 c d+(a+b)(9a+b),\quad C_1=6a^2+2ab,\quad D_1=a^2.
 $$
 And let  $X=A_1x_1+B_1/3$, $Y=A_1y_1$.
 We obtain a Weierstrass equation for $E_1$ as in the lemma.
 	 \end{proof}

	The $P_i$ has image $P_{E_0}:(B_0/3, A_0a)$ on $E_0$, while $Q_i$ has image $Q_{E_0}:(B_0/3, -Aa)=-P_{E_0}$ on $E_0$. Then $p_0(\xi)=6P_{E_0}$ on $E_0$. A direct calculation shows that, the $X$-coordinate of $2P_{E_0}$ is 
		
		$$B_0/3-4cd.$$
		
		We know that when $cd=0$, the divisor $D=P_1+P_2+P_3-Q_1-Q_2-Q_3$ has image $D_{E_0}=P_{E_0}-Q_{E_0}=2P_{E_0}$, which is a torsion point on $E_0$. So the divisor $D_{E_0}$ on $C$ is torsion. 	But when $cd\neq 0$, it may happen that $D_{E_0}$ is not torsion. 
		
		We proceed similarly on $E_1$. Then $p_1(\xi)=6P_{E_1}$ while $P_{E_1}$ has coordinate $(B_1/3, A_1a)$. A direct calculation shows that, the $X$-coordinate of $2P_{E_1}$ is 
		
		$$B_1/3+4cd-4ab.$$
		But when $ab-cd\neq 0$, it may happen  that $D_{E_1}$ is non-torsion. We will complete the proof  by giving two examples
\end{proof}
\begin{example}
	Let $(a, b, c, d)=(1, 1,1, 2)$, then we have 
	$$(A_0, B_0, C_0, D_0)=(4, -4, 4, 1), \qquad (p_0, q_0)=(32/3, 880/27),$$
	 $$ (A_1, B_1, C_1, D_1)=(-20, 12, 8, 1), \qquad (p_1, q_1)=(-208, 1168).$$
	The elliptic curve $E_0$ has an equation
	$$y^2=x^3+32/3x+880/27$$
	with $j$-invariant $j=250.137404580153$.
	The elliptic curve $E_1$ has an equation
	$$y^2=x^3-208x+1168$$
	with $j$-invariant $j = -242970624/3275$.
	
	Comparing the two $j$-invariants of $E_0$ and $E_1$, they are not equal.
	The point $P_{E_0}$ has coordinate $(-4/3, 4)$. Using Sage, this point has a N\'{e}ron-Tate height of 0.216095198667748, thus not torsion. Hence $D_{E_0}$ is non-torsion  on $E_0$.
	
	The point $P_{E_1}$ has coordinate $(4, 20)$. Using Sage, this point has a N\'{e}ron-Tate height of 0.106438087886740, thus not torsion. Hence  $D_{E_1}$ is non-torsion  on $E_1$.
\end{example}
\begin{example}
	Let $(a, b, c, d)=(3, 1, 2, 5)$, then we have
	$$(A_0, B_0, C_0, D_0) = (3, 24, 48, 9), \qquad (p_0, q_0)=(1536,  -1136),
	$$
	$$(A_1, B_1, C_1, D_1) = (-132, 72, 60, 9), \qquad (p_1, q_1)=(-9648, 374544).$$
	
	The elliptic curve $E_0$ has an equation
	$$y^2=x^3+1536x+-1136$$
	with $j$-invariant $j=134217728/77859$.
	The elliptic curve $E_1$ has an equation
	$$y^2=x^3-9648x+374544$$
	with $j$-invariant $j = -33261981696/1046771$.
	Comparing the two $j$-invariants of $E_0$ and $E_1$, they are not equal. Comparing with the example above, $j$-invariants for the two $E_0's$ (resp. $E_1's$) are not equal.
	
	The point $P_{E_0}$ has coordinate $(8, 108)$. Using Sage, this point has a N\'{e}ron-Tate height of 0.727274930661652, thus not torsion. Hence $D_{E_0}$ is non-torsion on $E_0$.
	
	The point $P_{E_1}$ has coordinate $(24, 396)$. Using Sage, this point has a N\'{e}ron-Tate height of 0.641675355328461, thus not torsion. Hence $D_{E_1}$ is non-torsion  on $E_1$.
\end{example}

\begin{proposition}
	The $\xi_C$ is non-torsion for generic $C \in \sN$.
\end{proposition}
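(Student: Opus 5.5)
The plan is to reduce to a single specialization, using the explicit non-torsion examples above. The property ``$\xi_C$ is torsion'' is a countable union of closed conditions on $\sN$. For each $n\geq 1$, the locus
$$Z_n := \{ s\in \sN : n\,\xi_s = 0 \}$$
is the preimage of the zero section under the section $n\xi:\sN\to\sJ$. Since $\sJ\to\sN$ is separated, $Z_n$ is Zariski closed in $\sN$.

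Because $\sN$ is irreducible (an open subset of $\BP^3$), it suffices to find, for each $n$, one point of $\sN$ outside $Z_n$. Then each $Z_n$ is a proper closed subset. Hence the generic point $\eta$ of $\sN$ lies in no $Z_n$, so $\xi_\eta$ is non-torsion. Equivalently, $\xi_C$ is non-torsion for $C$ outside a countable union of proper closed subsets; in particular this holds for every transcendental $C$.

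Next I produce the required point, and a single point works for all $n$. Take $s=(a,b,c,d)=(1,1,1,2)$ from the first example. First I check that $s\in\sN$, i.e. that the $(3,3)$ curve with these coefficients is smooth. This is a routine discriminant or Jacobian-criterion computation. Alternatively, it follows once one knows the quotient curves are smooth of the expected genus, since the computation above produced honest elliptic curves $E_0,E_1$. By the proof of Proposition~\ref{prop:nonzero-projections}, the projection satisfies $p_0(\xi_{C_s}) = 6P_{E_0}$, and the example shows $P_{E_0}=(-4/3,4)$ has positive Néron--Tate height. Hence $p_0(\xi_{C_s})$ is non-torsion. Since $p_0$ is a homomorphism, $n\xi_{C_s}=0$ would force $n\,p_0(\xi_{C_s})=0$. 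So $s\notin Z_n$ for every $n$.

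The main obstacle is making sure the specialization is legitimate. The projection $p_0$ and the identification of $\xi_{C_s}$ with the class of $D=\sum P_i-\sum Q_i$ must specialize correctly at $s$. This requires the decomposition $\psi$ and the quotient map $C\to E_0=C/\langle\sigma_1,\sigma_2\rangle$ to be defined over all of $\sN$, as morphisms of abelian schemes, and not merely at the generic point. The quotient by the finite group $\Sigma$ acting fiberwise on a smooth family is again a smooth family near $s$, and the map $\sJ\to \Jac(\sC/\Sigma)$ is a homomorphism of abelian schemes. So $p_0$ is compatible with specialization. It also remains to confirm that at $s$ the points $x=0$ and $y=0$ of $C_s$ are distinct and give the rulings' fibers over $\infty$ (up to the choice of point, which does not matter). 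This is immediate from the equation, since $F(0,y)=a+dy^2$ up to the point at infinity. With this in place the argument is complete. As a remark, the same specialization combined with Proposition~\ref{prop:nonzero-projections} shows that every $p_i(\xi_\eta)$ is non-torsion, which is what the bigness argument will later need.
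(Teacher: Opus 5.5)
Your strategy is the same as the paper's: detect non-torsion of $\xi_{C_s}$ through a homomorphism $\Jac(C_s)\to E$ onto an elliptic quotient. Your argument that each $Z_n=\{n\xi=0\}$ is closed in the irreducible $\sN$ correctly supplies the passage from one specialization to the generic point, which the paper leaves implicit. The gap is the witness. The value $p_0(\xi_{C_s})=6P_{E_0}$, which you import from the proof of Proposition~\ref{prop:nonzero-projections}, is wrong: the image of $\xi_{C_s}$ on $Y^2=X^3+\frac{32}{3}X+\frac{880}{27}$ is in fact $0$.

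Here is why. Each $\sigma_i$ preserves both rulings, so $\sigma_i^*\xi=\xi$, and the norm $\eta$ of $\xi$ to $X_1=C/\sigma_1$ is $\tau_1$-invariant. Since $\sigma_2^*w=-w/u^3$, the coordinate $y_0=w/(u+1)^3$ is anti-invariant under $\tau_1$. So the curve with $A_0=4(c-d)^2$ is $X_1/\langle\iota_1\tau_1\rangle$, not $C/\Sigma$. For the quotient map $\phi$ one gets $\phi^*\phi_*\eta=\eta+\tau_1^*\iota_1^*\eta=\eta-\eta=0$, so the image is torsion.

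Concretely, at $s=(1,1,1,2)$ the third points $(0,\infty)$ and $(\infty,0)$ of the two fibres are $\sigma_1$-fixed. They map to $R=(20/3,20)$ and $-R$, not to $\pm P_{E_0}$. Since $R$ lies on the tangent line at $P_{E_0}=(-4/3,4)$, we have $R=-2P_{E_0}$, and the image is $4P_{E_0}+2R=0$. So the positive height of $P_{E_0}$ tells you nothing about $\xi_{C_s}$.

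To repair this, use the other curve, $Y^2=X^3-208X+1168$. It is built from $y_1=w/(u-1)^3$, which is $\tau_1$-invariant, so it is the genuine $C/\Sigma$. There $(0,\infty)$ and $(\infty,0)$ lie in one $\Sigma$-orbit and cancel. The image of $\xi_{C_s}$ is then $\pm4P_{E_1}$ with $P_{E_1}=(4,20)$. This point is non-torsion, by the Sage height or by Nagell--Lutz: $4P_{E_1}=(-12,44)$, and $44^2$ does not divide $4(-208)^3+27\cdot 1168^2=838400$. With this witness the rest of your argument goes through. The family-level compatibility you worry about is unnecessary, since $Z_n$ is defined by $n\xi$ itself and the computation takes place entirely in the fibre $C_s$.

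Two smaller points. First, your alternative justification of smoothness of $C_s$ via $E_0,E_1$ is not valid, so a direct check is needed. Riemann--Hurwitz for $C_s\to C_s/\sigma_1$ works: $h(u)=u^6+2u^5-5u^4-16u^3-5u^2+2u+1$ is squarefree, and $(0,\infty),(\infty,0)$ are smooth $\sigma_1$-fixed points, so the normalization has genus at least $4=p_a$. Second, your closing remark is false. Writing $q\colon C\to C/\Sigma$, invariance gives $4\xi=q^*q_*\xi\in q^*\Jac(C/\Sigma)$. Hence the projections of $\xi_\eta$ to the three elliptic factors other than $C/\Sigma$ are torsion.
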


\begin{proof}
Composing the isogeny $\Jac(C)\to E_i$ with the map $C\to \Jac(C)$, the image of $\xi$ in $E_i$ is also a degree zero divisor $\xi_E$. In the group $E(K)$, if $\xi_E$ is non-torsion then $\xi_C\in \Jac(C)(\bar{\BQ})$ is also non-torsion. In fact, if $\varphi: G\to H$ is a homomorphism of two finitely generated groups, and $x\in G$ is torsion, then $\varphi(x)$ is also torsion. Conversely, if a point $y\in H$ is non-torsion, then any preimage of $y$ in $G$ is non-torsion. 
\end{proof}

Geometrically, Proposition~\ref{prop:nonzero-projections} shows that the canonical quadratic point sees all four elliptic directions in the Jacobian decomposition. This is precisely the input needed to identify the minimal abelian subscheme attached to the section $\nu$.

\begin{corollary}
	\label{cor:minimal-J}
	For the family $\pi:\sJ \to \sN$, the minimal abelian subscheme $A_{\min}\incl \sJ$ such that $\nu(\sN)\subset A_{\min}+\tau$ with $\tau$ a 
	torsion section of $\pi$  is $\prod_{i=0}^3\CE_i$.
\end{corollary}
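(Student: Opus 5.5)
We need to show that the minimal abelian subscheme associated with $\nu=\xi$ is all of $\sJ$. Up to isogeny, $\sJ$ is $\prod_{i=0}^3\CE_i$ via $\psi$.

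The plan is to work over the generic point $\eta$ of $\sN$, or after a finite étale base change that trivializes the relevant torsion and level structures, and to classify the abelian subschemes of $\prod_i\CE_i$. By Proposition~\ref{prop:jacobian-decomposition-N}, the four elliptic curves $E_0,\dots,E_3$ over the generic fiber are pairwise non-isogenous. Each $E_i$ also has non-constant $j$-invariant, so it has no extra endomorphisms over $\bar k$ and $\End(E_i)=\BZ$. Consequently $\mathrm{Hom}(E_m,E_n)=0$ for $m\neq n$. It follows from Poincaré reducibility, together with the vanishing of homomorphisms between non-isogenous simple factors, that every abelian subvariety of $E_0\times E_1\times E_2\times E_3$ has the form $\prod_{i\in I}E_i$ for some subset $I\subseteq\{0,1,2,3\}$. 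Abelian subschemes over $\sN$ are determined by their generic fibers, since they are the closures of those fibers in the abelian scheme. Hence the abelian subschemes of $\psi(\sJ)$ are exactly the products $\prod_{i\in I}\CE_i$, and on the $\sJ$ side they are their preimages up to isogeny.

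Now suppose $\nu(\sN)\subseteq A'+\tau$ with $A'=\prod_{i\in I}\CE_i$ (pulled back via $\psi$) and $\tau$ torsion, and suppose $j\notin I$. Apply the projection $p_j$. It kills $A'$, so $p_j(\xi_C)=p_j(\tau_C)$ is torsion for every $C$, and in particular for generic $C$. This contradicts Proposition~\ref{prop:nonzero-projections}. Therefore $I=\{0,1,2,3\}$, and $A_{\min}$ equals the whole of $\sJ\sim\prod\CE_i$.

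The main obstacle is the generic input of Proposition~\ref{prop:nonzero-projections}. Its proof treats $E_0$ and $E_1$ explicitly through examples. For $E_2$ and $E_3$ I would invoke the $S_3=\wt\Sigma/\Sigma$ symmetry of Proposition~\ref{S_3} and Proposition~\ref{prop-lm}. This symmetry permutes $\sigma_1,\sigma_2,\sigma_3$, hence permutes $E_1,E_2,E_3$, while fixing $E_0$ and sending $\xi$ to $\pm\xi$. The point needing care is that $\xi$ is only canonical up to sign and up to the choice of ruling; since the statement concerns torsion, the sign ambiguity is harmless.

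The other point to check is the passage from "non-torsion at one specialization" to "non-torsion generically". Torsion of a section is a closed condition of bounded order on the generic fiber: if $p_j(\xi)$ were torsion of order $N$ generically, it would be killed by $N$ at every fiber. So a single fiber where $p_j(\xi)$ is non-torsion, as exhibited in the examples, suffices.
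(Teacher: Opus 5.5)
Your proposal is correct and is essentially the paper's argument. A proper abelian subscheme must, up to isogeny, omit some factor $\CE_j$, and projecting to that factor would make $p_j(\xi_C)$ torsion, contradicting Proposition~\ref{prop:nonzero-projections}. You make explicit the classification of abelian subschemes of $\prod_i\CE_i$ that the paper compresses into ``would miss at least one elliptic direction''. Your remarks on $E_2,E_3$ via the $S_3$-symmetry, and on passing from one non-torsion specialization to the generic fiber, fill in steps the paper leaves implicit in Proposition~\ref{prop:nonzero-projections}.
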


\begin{proof}
	If $A_{\min}$ were a proper abelian subscheme, then on a general fiber, $A_{\min, C}$ would be a proper abelian subscheme of $\Jac(C)$. Under the decomposition of Proposition~\ref{prop:jacobian-decomposition-N}, such a proper subscheme would miss at least one elliptic direction up to isogeny. But Proposition~\ref{prop:nonzero-projections} shows that $\xi_C$ has nonzero projection to every factor, contradicting the definition of the minimal abelian subscheme.
\end{proof}

\subsection{Proof of Theorem \ref{thm:intro-triple}}

Now we verify the condition in Theorem \ref{thm: criterion}.
Let $B \subseteq A_{\min}$ be an abelian subscheme over $\sN$. 
The modular maps
\[
\varphi_B : \sN \longrightarrow \sA_{q_B}, \qquad \phi_B: \sN\lra \CS_{q_B}
\]
attached to the quotient $\sJ_\sN/B$ record the variation of the corresponding product of elliptic factors. 
Because the family $\sN$ has dimension $3$,  the condition of the criterion is that for every proper quotient $\sJ/B$, \[
\dim\sN\le \dim \varphi_B(\sN)+\dim_\sN B.
\]

Corollary \ref{cor:minimal-J}  implies that the minimal abelian subscheme $A_{\min}\incl \sJ_\sN$  for $\nu$ is $\sJ$ itself.
Also, since $\sJ_\sN$ is isogenous to the product  $\prod_{i=0}^{3} \CE_i$ with $\CE_i$ generically non-constant and non-isogenous to each other, each $B$ must be isogenous to a partial product $\prod _{i\in I} \CE_i$.  Thus $\varphi_B$ and $\phi_B$ factor through  the following maps:
$$\varphi_I: \sN\lra \CE^4\lra \CE^{4-|I|}, \qquad J_I: \sN \lra \BA^4\lra \BA^{4-|I|}$$ where $\CE$ is the universal elliptic curve over $\BA^1$ after adding some level structure, and $\phi_I$ is the map by taking $j$-invariants for factors not in $I$. It follows that 
$$\dim \varphi_B (\sN)=\dim \varphi_I(\sN)\ge \dim J_I(\sN).$$
Thus, we are reducing to show the following
$$\dim J_I(\sN)\ge 3-|I|.$$
This follows from Proposition \ref{prop:jacobian-decomposition-N}, because $\dim J(\sN)=3$, and the projection $\BA^4\lra \BA^{4-|I|}$ can at most drop the dimension $J(\sN)$ by the $|I|$.

\section{A CM family}\label{sec:cm}
This section is devoted to a second application of Theorem~\ref{thm: criterion}, namely to families containing a special CM curve with $\zeta_{15}$-action.
\subsection{On $y^5=x^3-1$}
We begin with the Jacobian of the curve $C_0$  defined by 
$$y^5=x^3-1.$$
Let $\zeta_n$ denote a primitive $n$-th root of unity.	Then there are two automorphisms: 
$$(x, y)\mapsto (x, \zeta_5 y);$$
$$(x, y)\mapsto (\zeta_3x, y).$$
Thus the Jacobian $J(C_0)$ has  $\BQ[\zeta_5]\subset \End(J(C_0))$ which gives a  half-CM structure and $\BQ[\zeta_3]\subset \End(J(C_0))$.
The basis of $H^0(C_0, \Omega^1)$ is given by 
$$ \omega_1= \frac{1}{x} dy,\qquad  \omega_2=\frac{1}{x^2} dy,\qquad   \omega_3=\frac{y^2}{x^2} dy,\qquad  \omega_4=\frac{y}{x^2} dy.$$
Thus, the canonical embedding of $C_0$ lives in the cone 
$$\omega_2\cdot \omega_3=\omega _4^2, $$
and the induced action of   $\BQ[\zeta_{15}]$ on the Jacobian of $C_0$ is given as follows: 
\begin{equation}\label{eq-cm}(\omega_1, \omega_2,\omega_3, \omega_4)\mapsto (\zeta_{15}^{-2}\omega_1, \zeta_{15}^{-7}\omega_2,\zeta_{15}^{-1}\omega_3, \zeta_{15}^{-4}\omega_4).\end{equation}
So there is a degree-8 semisimple subalgebra $\BQ[\zeta_{15}]\subset \End_\BQ(J(C_0))$ acting on $J(C_0)$, so $J(C_0)$ is CM.

\begin{lem}\label{lem-cm}
The Jacobian $\Jac (C_0)$ is simple. 
\end{lem}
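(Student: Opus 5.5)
The plan is to use the Shimura--Taniyama criterion: an abelian variety of dimension $g$ with complex multiplication by a CM field $F$ of degree $2g$ is simple if and only if its CM type is primitive, that is, not induced from a CM type of a proper CM subfield of $F$. Here $g=4$ and $F=\BQ(\zeta_{15})$ has degree $8=2g$. So the lemma reduces to computing the CM type of $\Jac(C_0)$ from \eqref{eq-cm} and checking that it is primitive.

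First I will make sure that $\BQ(\zeta_{15})$ genuinely acts as a field. The automorphisms $(x,y)\mapsto(x,\zeta_5y)$ and $(x,y)\mapsto(\zeta_3x,y)$ commute, so their composite $\alpha$ has order $15$. By \eqref{eq-cm}, every eigenvalue of $\alpha^*$ on $H^0(C_0,\Omega^1)$ is a primitive $15$-th root of unity. The same then holds for $\alpha^*$ on $H^1(C_0,\BQ)$, because $H^1\otimes\BC=H^{1,0}\oplus\overline{H^{1,0}}$. Hence the minimal polynomial of $\alpha^*$ divides the cyclotomic polynomial $\Phi_{15}$, which is irreducible, so $\BZ[\alpha]\cong\BZ[\zeta_{15}]$. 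Since $\dim_\BQ H^1=8=[\BQ(\zeta_{15}):\BQ]$, this gives a field $\BQ(\zeta_{15})\subset\End^0(\Jac(C_0))$ of degree $2g$.

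Next I read off the CM type. I identify the embeddings of $\BQ(\zeta_{15})$ into $\BC$ with $(\BZ/15)^\times$ via $\zeta_{15}\mapsto\zeta_{15}^{a}$. The action on $H^{1,0}$ in \eqref{eq-cm} has exponents $-2,-7,-1,-4$, so
$$\Phi=\{-1,-2,-4,-7\}=\{14,13,11,8\}\subset(\BZ/15)^\times.$$
Its complement is $\{1,2,4,7\}=-\Phi$, so $\Phi\sqcup\bar\Phi=(\BZ/15)^\times$ and $\Phi$ is a CM type, as it must be.

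It remains to prove primitivity. Since $\Gal(\BQ(\zeta_{15})/\BQ)$ is abelian, $\Phi$ is induced from a CM subfield exactly when its stabilizer $\{u\in(\BZ/15)^\times : u\Phi=\Phi\}$ is nontrivial. I will check the seven elements $u\in\{2,4,7,8,11,13,14\}$ one at a time. For instance, $2\Phi=\{1,7,11,13\}$ and $4\Phi=\{2,7,11,14\}$, and neither equals $\Phi$. Each of the other $u$ likewise fails to fix $\Phi$, so the stabilizer is trivial and $\Phi$ is primitive. By the criterion, $\Jac(C_0)$ is simple.

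I expect the main obstacle to be the first step rather than the final combinatorics. It depends on \eqref{eq-cm} giving the correct eigencharacters on the listed basis of differentials, including the sign conventions in the exponents. An error there would change $\Phi$, and for instance a type stable under $u=4$ would come from $\BQ(\sqrt{-15})$. If needed, I would recompute the eigenvalues directly from $\omega_j=x^{-a}y^{b}\,dy$, which transforms by $\zeta_3^{-a}\zeta_5^{b+1}$. I would also confirm that the resulting exponents are exactly four pairwise non-conjugate classes in $(\BZ/15)^\times$.
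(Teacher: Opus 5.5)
Your proposal is correct and is essentially the paper's argument. The paper also reads off the CM type $\{-1,-2,-4,-7\}$ from \eqref{eq-cm} and shows its stabilizer in $(\BZ/15\BZ)^\times$ is trivial. It differs only in re-deriving the needed half of the Shimura--Taniyama criterion by hand (via maximal commutative subalgebras of $\End^0$), and in restricting the check to $r\in\{2,4,7\}$, since $r\cdot(-1)$ must lie in $\Phi$; you cite the criterion directly and justify more carefully that $\BQ(\zeta_{15})$ acts as a field. Your side remark about $u=4$ is slightly off ($\BQ(\sqrt{-15})$ is the fixed field of $\langle 2\rangle$, while $\langle 4\rangle$ fixes $\BQ(\sqrt{-3},\sqrt{5})$), but it plays no role in the proof.
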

\begin{proof}
We prove this by considering the embedding 
	$$\BQ(\zeta_{15})\to \End(\Jac(C))\otimes\BQ$$
	induced by the action of $\zeta_{15}$ on $C$. If $\Jac(C)$ is not simple, then it is isogenous to a direct sum $\oplus_{i=1}^n A_i^{r_i}$ where $A_i$ are simple abelian varieties not isogenous to each other. In this way, 
	$$\End(\Jac(C_0))\otimes \BQ\cong \bigoplus_{i=1}^n M_{r_i} (D_i)$$
	where $D_i$ are the division algebras $\End(A_i)\otimes \BQ$. The maximal subalgebra of $\End(\Jac(C))\otimes \BQ$ has the form 
	$L=\bigoplus_{i=1}^n L_i$, with $L_i$ a maximal commutative subalgebra of $M_{r_i}(D_i)$. 
	As $\dim_\BQ L_i\leq 2r_i\dim A_i$, we have  $\dim_\BQ L\leq \sum 2r_i \dim A_i=2\dim \Jac(C)=8$.
	
	As $\dim_\BQ\BQ(\zeta_{15})=8$, we see that the image of $\BQ(\zeta_{15})$ is a maximal subalgebra of $\End(\Jac(C))\otimes \BQ$. So we have the form $\BQ(\zeta_{15})=\bigoplus_{i=1}^nL_i$. As $\BQ(\zeta_{15})$ is a field, we have $n=1$. And $A_1$ is a CM abelian variety by a proper CM subfield $M$ of $\BQ(\zeta_{15})$ with $r_1=[\BQ(\zeta_{15}):M]$. Moreover, the CM-type on $\Jac(C_0)$ is stable under the action of $\Gal(\BQ(\zeta_{15})/M)$.
	
	By computation in \ref{eq-cm}, we know that the CM type is given by 
	$$(b_1, b_2, b_3, b_4)=(-1, -2, -4, -7).$$
	The stabilizer subgroup $H$ of $\Gal(\BQ(\zeta_{15})/\BQ)$ of the CM type is given by $r\in (\BZ/15\BZ)^\times$ such that 
	$$\{rb_1, rb_2, rb_3, rb_4\}=\{b_1, b_2, b_3, b_4\}.$$
	So $r$ can only be 1, 2, 4, 7.
	We calculate the left-hand sides  as follows:
	$$r=2: (-2, -4, 7, 1),$$
	$$r=4: (-4, 7, -1, 2),$$
	$$r=7: (-7, 1, 2, -4).$$
	So, none of them matches the right-hand side.
	Thus, the stabilizer $H$ is trivial. It follows that $r_1=1$.
	Thus $\Jac (C_0)$ is simple.

\end{proof}

\begin{lem}\label{lem-cm}
	The $C_0$ is the only genus four curve over $\BC$ with  simple Jacobian and action by $\mu_{15}$.\end{lem}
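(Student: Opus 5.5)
We need to show: if $C$ is a genus 4 curve over $\BC$ with a faithful action of $\mu_{15}$ and $\Jac(C)$ simple, then $C\cong C_0$.

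\textbf{Step 1: structure of the quotient maps.} Let $g$ generate the $\mu_{15}$-action and set $h=g^5$ (order 3) and $k=g^3$ (order 5). I would apply Riemann--Hurwitz to $C\to C/\langle k\rangle$. For an order-5 cyclic cover of a curve of genus $g'$ with $r$ branch points we get $6=5(2g'-2)+4r$. The solutions are $(g',r)=(0,4)$ and $(1,\tfrac32)$, and the latter is impossible. Hence $C/\langle k\rangle\cong\BP^1$ with exactly $4$ branch points. Similarly, for $h$ we have $6=3(2g'-2)+2r$, giving $(g',r)\in\{(0,6),(1,3),(2,0)\}$.

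\textbf{Step 2: use simplicity.} If $C/\langle h\rangle$ had positive genus $g'$, its Jacobian would be a nonzero abelian subvariety of $\Jac(C)$, up to isogeny, of dimension $g'<4$. This contradicts simplicity, so $C/\langle h\rangle\cong\BP^1$ with $6$ branch points. The same argument applied to $C/\langle g\rangle$ shows it is $\BP^1$.

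\textbf{Step 3: the $\mu_{15}$-cover of $\BP^1$.} The quotient $C\to\BP^1=C/\mu_{15}$ is a cyclic cover of degree 15. Let the branch points have local monodromies $m_i\in\ZZ/15$, with $\sum m_i=0$, and write $e_i$ for the order of $m_i$. Riemann--Hurwitz gives
$$6=15(-2)+\sum_i 15\bigl(1-1/e_i\bigr),$$
so $\sum_i(1-1/e_i)=12/5$. The $e_i$ must satisfy the compatibility with the intermediate quotients: by Step 1 the order-5 part of the monodromy is nontrivial at exactly the points lying under the 4 branch points of $C\to C/\langle k\rangle$, and similarly for the order-3 part. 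I would enumerate the possible tuples. Subsets of $\{1/3,4/5,14/15,2/3\}$ with sum $12/5$ include $(2/3,4/5,14/15)$, whose total is $2.4$.

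So I would aim to show the signature is forced to be $(3,5,15)$, i.e. 3 branch points of orders $3,5,15$. The monodromy $(m_1,m_2,m_3)$ is then determined up to automorphisms of $\ZZ/15$ and Möbius transformations of the base. This pins down the curve $y^5=x^3-1$: its quotient map is $(x,y)\mapsto x^3$ branched over $0,1,\infty$ with exactly these orders. The other numerical candidates have to be excluded, for instance $(1/3\text{ contributions})$ combinations such as $(2/3)\times$ several plus $(4/5)$. I would test each against the branch counts from Step 1, namely preimage counts of 4 points for $k$ and 6 points for $h$. Any surviving candidate I would check against simplicity by computing the $\mu_{15}$ eigenspace decomposition of $H^0(\Omega^1)$ (Chevalley--Weil), as in the computation \eqref{eq-cm}.

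\textbf{Main obstacle.} The hard step is this case analysis. In particular, signatures with 4 branch points give 1-parameter families, and these must be ruled out. I expect them to fail either the condition $\sum m_i=0$ or the branch counts. If a family survives, then its generic member has $\Jac(C)$ non-simple via the CM-type stabilizer argument of the preceding lemma, or the family is excluded because its fibers are not simple.
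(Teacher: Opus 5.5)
Your route differs from the paper's and it works, but you never carry out the step you call the main obstacle, and it is much easier than you expect. For $e\in\{3,5,15\}$ the contributions $1-1/e$ are $10/15$, $12/15$, $14/15$; the value $1/3$ in your list does not occur. Let $a,b,c$ be the numbers of branch points of orders $3,5,15$. Then you need $10a+12b+14c=36$. Every term is at least $10$, so there are at most three branch points. Hence there are no four-point signatures and no one-parameter families, and the fallback via CM types and non-simple fibres is never needed.

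The only solutions are $(a,b,c)=(1,1,1)$ and $(0,3,0)$. The second is excluded because all local monodromies would lie in the subgroup of order $5$, so they could not generate $\ZZ/15$. For signature $(3,5,15)$ the monodromies are $m_1\in\{5,10\}$, $m_2\in\{3,6,9,12\}$ and $m_3=-m_1-m_2$. The group $(\ZZ/15)^\times$ acts simply transitively on these eight pairs. The three branch points have distinct orders, so a M\"obius transformation sends them to $0,1,\infty$ in a fixed order. Riemann's existence theorem then gives uniqueness, and the map $(x,y)\mapsto x^3$ on $C_0$ realizes this cover.

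Note also that Riemann--Hurwitz alone forces $C/\mu_{15}\cong\BP^1$. Base genus $1$ would require $6=\sum_i 15(1-1/e_i)$ with every term at least $10$, and base genus at least $2$ gives too much. So your argument never uses simplicity: it shows that $C_0$ is the unique genus-$4$ curve with a faithful $\mu_{15}$-action. With the enumeration done, your Steps 1--2 and the intermediate branch counts are only consistency checks.

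The paper instead works with the intermediate quotient $C\to C/\mu_5\cong\BP^1$, which has four branch points. Its appeal to simplicity there is superfluous, as your Step 1 shows. It uses the induced $\mu_3$-action to normalize the branch locus to $\{1,\zeta_3,\zeta_3^2,\infty\}$. It then uses Kummer theory to write $y^5=(x^3-1)^k$ and renormalizes to $k=1$. This produces the explicit equation with elementary tools. It does rest on a step the paper only sketches: the divisor of $f$ modulo $5$ is $\mu_3$-invariant, which needs the lift of $\mu_3$ to commute with $\mu_5$.

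Your global quotient by $\mu_{15}$ replaces Kummer theory with Riemann's existence theorem. In exchange, it makes transparent why there are no moduli: the cover is a rigid three-point cover.
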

\begin{proof}
Let $C$ be a smooth curve of genus $4$ with simple Jacobian and action by $\mu_{15}$. Then we have quotient curves
$$\xymatrix{
		C\ar[d]\ar[r]&C/\zeta _5\ar[d]\\
		C/\zeta _3\ar[r]&C/\zeta _{15}
	}$$
The Jacobians of these curves are proper subvarieties of $\Jac (C)$. As $\Jac (C)$ is simple, these curves must have genus $0$.
Thus, we have an isomorphism $C/\zeta _5\iso\BP^1$. On the other hand, $C/\zeta_5$ has an action by $\zeta _3$. This will induce an action of $\zeta _3$ on $\BP^1$. Composing with an automorphism of $\BP^1$, we may assume that $\zeta _3$ acting on $\BP^1$ is given by multiplication on the coordinate $x$. 
Using the Hurwitz formula for the projection $C\to C/\zeta _5=\BP^1$, we have 
	$$2\times 4-2=5\times (-2)+R,\qquad R=16.$$
	As each ramification point on $\BP^1$ has ramification index $4$, we have four ramification points on $\BP^1$.
	This set is stable under the action of $\zeta _3$. Thus, one of them is $0$ or $\infty$, and the others are in one orbit under $\zeta _3$.
	Composing with another automorphism of $\BP^1$, we may assume this set is 
	$$1, \zeta _3, \zeta _3^2, \infty.$$
By Kummer's theory, $C$ must have a function $y^5=f(x)$, for $f\in \BC[x]$ is a monic polynomial without a $5$th power factor. 
The zeros of $f$ are exactly $1, \zeta _3, \zeta _3^2$.
The action by $\zeta _3$ implies that $f(\zeta _3 x)=f(x)$. Thus $f$ is a power of  $(x^3-1)$. Thus $y^5=(x^3-1)^k$ for a $k$ prime to $5$.
Choose integer $a, b$ so that $ak+5b=1$, then change $y$ to $(x^3-1)^b y^a$, we obtain 
$$y^5=x^3-1.$$
\end{proof}

\subsection{ Proof of  theorem \ref{thm:intro-cm}}

Let $C_0$ be the CM curve $y^5=x^3-1$ by $\zeta_{15}$. And let $C_1$ be the explicit curve in $\sN$ constructed before in the chapter \ref{sec:triple} such that $\xi_{C_1}$ is non-torsion.

In this section, we aim to prove the following theorem:

\begin{theorem}
	Let $S$ be any family of smooth curves of genus 4 containing $C_0$, $C_1$ with $\dim S\le 4$, then $\xi$ is big on $S$.
\end{theorem}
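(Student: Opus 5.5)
We apply Theorem~\ref{thm: criterion} to the section $\xi: S\to \sJ_S$, where $\sJ_S$ is the relative Jacobian over $S$ (after passing to a suitable étale cover so that $\xi$ is defined and a level structure exists). The strategy has two steps. First we show that the minimal abelian subscheme $A_{\min}$ attached to $\xi$ is all of $\sJ_S$, and that the only abelian subschemes $B\subseteq \sJ_S$ over $S$ are $0$ and $\sJ_S$. Then we check the dimension inequality
\[
\dim S\le \dim\varphi_B(S)+\dim_S B
\]
for these two cases only.

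\textbf{Rigidity of abelian subschemes.} Let $B\subseteq \sJ_S$ be an abelian subscheme over $S$ with $S$ irreducible. Then $B$ is flat with constant relative dimension, so its fiber $B_{C_0}$ is an abelian subvariety of $\Jac(C_0)$ of the same dimension as the generic fiber. By the simplicity lemma for $\Jac(C_0)$ (Lemma~\ref{lem-cm}), $B_{C_0}$ is either $0$ or $\Jac(C_0)$. Hence $\dim_S B\in\{0,4\}$, which forces $B=0$ or $B=\sJ_S$.

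The same argument identifies $A_{\min}$. Since $\xi_{C_1}$ is non-torsion, $\xi$ does not lie in a torsion translate of the zero subscheme, because a torsion section specializes to a torsion point at $C_1$. Therefore $A_{\min}\neq 0$, and by the dichotomy above $A_{\min}=\sJ_S$. I need $C_1$ to lie in $S$ with the section $\xi$ specializing correctly there, which holds since $\xi$ is defined fiberwise on the open locus of curves with smooth quadric.

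\textbf{Verifying the inequality.} For $B=\sJ_S$ we have $\dim_S B=4\ge \dim S$, so the inequality holds trivially; this is exactly where the hypothesis $\dim S\le 4$ enters. For $B=0$, the map $\varphi_0: S\to \CA_4$ is $s\mapsto(\Jac(C_s),\xi_s)$. Because $S$ parametrizes pairwise non-isomorphic curves (or at least maps quasi-finitely to $\sM_4$), Torelli makes the underlying modular map $\phi_0:S\to\CS_4$ quasi-finite. It follows that
\[
\dim\varphi_0(S)\ge\dim\phi_0(S)=\dim S,
\]
so the inequality holds. Theorem~\ref{thm: criterion} then yields bigness.

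\textbf{Main obstacle.} The delicate point is the first step: passing from the single fiber $C_0$ to the whole family. This requires that an abelian subscheme over an irreducible base have constant fiber dimension and specialize to an abelian subvariety at $C_0$. It also requires that $C_0$ actually lies in the locus where the quadric is smooth, so that $\xi$ is defined there, or else that we work on an open neighbourhood and handle the extension of $\xi$. I would settle this by checking that the cone $\omega_2\omega_3=\omega_4^2$ is singular. If it is, $C_0$ lies on the singular-quadric locus, and I would instead use only the specialization argument for $B$, which needs only the abelian scheme $\sJ_S$ over a base containing $C_0$. In that case the definition of $\xi$ is required only on a dense open subset containing $C_1$.
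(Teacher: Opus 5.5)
Your proposal is correct and follows the paper's proof. Simplicity of $\Jac(C_0)$ forces every abelian subscheme to be $0$ or $\sJ_S$, non-torsion of $\xi_{C_1}$ gives $A_{\min}=\sJ_S$, and the two cases $B=\sJ_S$ (where $\dim S\le 4$ is used) and $B=0$ (quasi-finiteness of the moduli map via Torelli) are checked exactly as in the paper.

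Your extra care is warranted, and it goes beyond the paper's own argument. The quadric $\omega_2\omega_3=\omega_4^2$ has rank $3$, so $C_0$ does lie on the cone and $\xi$ is not defined there. As you suggest, one must therefore apply the criterion on the open locus where $\xi$ is defined, and then use that abelian subschemes there extend across the fiber over $C_0$ after normalizing the base. The paper's proof passes over this point, and it also never mentions that $C_1$ is what rules out $A_{\min}=0$.
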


\begin{proof}
	We want to apply Theorem \ref{thm: criterion}. 
	Recall that $A_{\min}$ is the minimal abelian subscheme of $\sJ$ over $S$ so that $\nu(S)$ is contained in a translation $\tau+A_{\min, S}$ by a torsion section $\tau$ of $\sJ_S$ over $S$.
	By Lemma \ref{lem-cm}, $\Jac (C_0)$ is simple. Thus,  in the criterion of Theorem \ref{thm: criterion}, we must have  $A_{\min}=\sJ_S$, which  is simple.
	Let $B$ be an abelian subscheme of $A_{\min}$; then either $B=\sJ_S$ or $B=0$. 
	\begin{itemize}
		\item When $B=\sJ_S$, $\dim\varphi_B(S)=0$ and $\dim_SB=\dim_S\sJ_S=4$. Of course $\dim S\leq \dim_S\sJ_S$. 
		\item When $B=0$, $\dim_SB=0$, then  $\varphi_B$ is injective. Thus $\dim S=\dim \varphi_B( S)$.
	\end{itemize}
	Thus, the criterion \ref{thm: criterion}  implies that $\xi$ is big. 
\end{proof}

		\section{Consequences and further questions}\label{sec:consequences}
		
		In this final section, we record several arithmetic consequences of the bigness results proved in Sections~\ref{sec:triple} and~\ref{sec:cm}, and then briefly discuss some questions for future study. The general philosophy is that once the canonical quadratic point is big on a family, its Néron--Tate height cannot remain too small compared with the arithmetic complexity of the corresponding curve. This leads naturally to finiteness statements and non-torsion results.
		
		\subsection{A Northcott-type consequence}
		
		We begin with a general consequence of bigness.
		
		\begin{proposition}
			\label{prop:northcott-general}
			Let $S$ be an irreducible family of smooth non-hyperelliptic genus $4$ curves, and assume that the canonical quadratic point $\xi$ is big on $S$. Then there exist a Zariski open dense subset $U \subseteq S$, a real number $\epsilon>0$, and a constant $c \in \RR$ such that for all $C \in U(\overline{K})$,
			\[
			\hat h(\xi_{C}) \ge \epsilon\, h_{\mathrm{Fal}}(C)-c.
			\]
			In particular, for every real number $B$, the set
			\[
			\left\{\, s \in U(\overline{K}) : h_{\mathrm{Fal}}(C)\le B \text{ and } [K(s):K]\le d \,\right\}
			\]
			is finite for each fixed positive integer $d$.
		\end{proposition}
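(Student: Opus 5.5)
The first assertion is essentially a restatement of Definition~\ref{def:bigness-intro}, applied to the abelian scheme $\sJ_S\to S$ with its canonical principal polarization. Concretely, take for $L$ the symmetric relatively ample line bundle $2\Theta$ (symmetrized theta divisor); any choice changes $h_{\NT}$ only by a bounded multiple. Bigness of $\xi$ then gives directly $\epsilon>0$, $c$ and a dense open $U$ with $\hat h(\xi_C)\ge \epsilon\, h_{\mathrm{Fal}}(C)-c$ for all $C\in U(\overline{K})$. Here $h_{\mathrm{Fal}}(C)$ denotes the stable Faltings height of $\Jac(C)=\sJ_s$, and the convention that $\hat h$ is taken with respect to the canonical principal polarization only affects $\epsilon$. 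If one prefers to start from Theorem~\ref{thm-YZ} instead, apply it with $\bar M$ an adelic line bundle whose height is comparable to $h_{\mathrm{Fal}}$. For instance, take the pullback along the modular map $S\to \CS_4$ (after a level structure) of the Hodge bundle with its Faltings metric. This extends to the Baily--Borel compactification as an ample-type $\BQ$-line bundle, so $h_{\bar M}$ agrees with $h_{\mathrm{Fal}}(\sJ_s)$ up to $O(\log h_{\mathrm{Fal}})+O(1)$, which is harmless after shrinking $\epsilon$.

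For the finiteness, I would argue as follows. The set in question is contained in the set of $s\in U(\overline K)$ with $[K(s):K]\le d$ and $h_{\mathrm{Fal}}(\sJ_s)\le B$. The main point is to turn a bound on $h_{\mathrm{Fal}}$ into finiteness of points of $S$, and there are two ingredients. First, by Faltings' Northcott property for the Faltings height on $\CA_4$ (after fixing a level structure to get a fine moduli scheme), there are only finitely many $\bar{\BQ}$-isomorphism classes of principally polarized abelian fourfolds with bounded degree of definition and bounded stable Faltings height. Torelli then gives finitely many curves $C$ up to isomorphism. Second, I need the classifying map $S\to \sM_4$ to be quasi-finite on $U$, so that each isomorphism class of curve corresponds to only finitely many $s$. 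I will handle this as follows: if the map $S\to \sM_{4}$ (equivalently to $\CS_4$) had positive-dimensional fibers, then $\dim\varphi_B(S)$ with $B=0$ could not reach $\dim S$. Hence, by Theorem~\ref{thm: criterion} applied to $B=0$, we would need $\dim S\le \dim \varphi_0(S)$, where $\varphi_0$ is the map $S\to \CA_4$ via $\xi$.

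I expect this quasi-finiteness step to be the main obstacle. The criterion only guarantees quasi-finiteness of $s\mapsto (\sJ_s,\xi_s)$, not of $s\mapsto \sJ_s$. My fix is to use the height bound itself. On each fiber of the modular map over a fixed $\Jac(C)$, which is a finite-type set, the points $\xi_s$ lie in the finitely generated group $\Jac(C)(L)$ for bounded-degree $L$; the degree bound matters because $\xi$ is only quadratic. Moreover $\hat h(\xi_s)$ is bounded, since bigness is an inequality in one direction only, and here I would instead invoke Theorem~\ref{thm-YZ} with $\bar M$ an ample adelic bundle on a compactification of $S$. That gives $h_{\bar M}(s)\le \epsilon^{-1}(\hat h(\xi_s)+c)$, and $\hat h(\xi_s)$ is in turn bounded above by $O(h_{\mathrm{Fal}})+O(1)$ via standard comparisons, after shrinking $U$. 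Then the usual Northcott theorem for the ample height $h_{\bar M}$ on $S$ gives finiteness directly. This route through Theorem~\ref{thm-YZ} is cleaner, so I would adopt it as the main argument and keep the Faltings/Torelli argument as a remark.
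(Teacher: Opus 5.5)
Your derivation of the inequality (it is just Definition~\ref{def:bigness-intro}) matches the paper. You are also right that the finiteness needs more. The paper simply appeals to Northcott finiteness for moduli points of bounded degree and bounded Faltings height, which tacitly requires $U\to\mathcal{M}_4$ to be quasi-finite. A lower bound by $\hFal$ alone cannot give that, since $\hFal$ is constant along isotrivial directions.

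The gap is in the route you finally adopt: the claim that $\hat h(\xi_s)\le O(\hFal(\sJ_s))+O(1)$ on a dense open set ``via standard comparisons''. No such comparison exists for sections of abelian schemes in general. Take the constant family $E\times E\to E$ with $\nu(s)=s$. The section is big and $\hFal(A_s)$ is constant, yet $\hat h(\nu_s)=\hat h_E(s)$ is unbounded on points of degree $\le 2$, and the finiteness statement fails there. So the bound must use something specific to $\xi$, and your proposal never identifies it. Separately, the sentence ``$\hat h(\xi_s)$ is bounded, since bigness is an inequality in one direction only'' is a non sequitur: bigness only bounds $\hat h$ from below.

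The missing idea is that $\xi_C$ is intrinsic to $C$. An isomorphism $C\simeq C'$ carries the canonical model, hence the unique quadric and its two rulings, to those of $C'$, so it sends $\xi_C$ to $\pm\xi_{C'}$. Now take a connected positive-dimensional fiber $F$ of $S\to\mathcal{M}_4$ and trivialize $\sJ|_F$ by a finite \'etale base change. Then $\xi$ takes values in the finite set $\{\pm\alpha_*\xi_C : \alpha\in\Aut(C)\}$, so it is constant on $F$; by rigidity so are $A_{\min}|_F$ and $\tau|_F$. Hence $\varphi_0$ is constant along the fibers of $S\to\mathcal{M}_4$. This is exactly the claim you first made and then retracted: your worry that the criterion only controls $s\mapsto(\sJ_s,\xi_s)$ is valid for arbitrary sections, but not for $\xi$.

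With this, Theorem~\ref{thm: criterion} for $B=0$ makes $S\to\mathcal{M}_4$ generically finite, hence quasi-finite on a dense open $U$. Your ``remark'' route (Faltings' Northcott property on $\CS_4$, Torelli, quasi-finiteness) then closes the argument, with no upper bound on $\hat h$ needed. The upper-bound route can also be repaired using the same intrinsicness: compare $\xi^*\bar P$ with the Hodge bundle on the closure of the image of $S$ in moduli, where the latter is big, using Yuan--Zhang's domination of any adelic line bundle by a big one. That is strictly more work.
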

		\begin{proof}
			The lower bound is precisely the definition of bigness. Once $h_{\mathrm{Fal}}(C)$ is bounded above, the inequality shows that $\hat h(\xi_{C})$ is also bounded above on the same set of points. Since bounded degree and bounded Faltings height give a Northcott-type finiteness statement for the corresponding moduli points, the desired finiteness follows.
		\end{proof}
		
		Applying Proposition~\ref{prop:northcott-general} to the families studied earlier, we obtain the following.
		
		\begin{corollary}
			\label{cor:northcott-N}
			Let $N$ be the triple involution locus of Section~\ref{sec:triple}. Then there exists a Zariski open dense subset $U \subseteq N$ such that for every real number $B$ and every positive integer $d$, the set
			\[
			\left\{\, s \in U(\overline{K}) : h_{\mathrm{Fal}}(C)\le B \text{ and } [K(s):K]\le d \,\right\}
			\]
			is finite.
		\end{corollary}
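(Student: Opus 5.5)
The corollary follows by combining Theorem~\ref{thm:intro-triple} with Proposition~\ref{prop:northcott-general}, applied to $S=N=\sN$ with $\nu=\xi$. So the plan is to check that the hypotheses of Proposition~\ref{prop:northcott-general} hold on $\sN$, and then transport its finiteness statement.

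\textbf{Step 1: the hypotheses.} The family $\pi:\sC\to\sN$ is irreducible, because $\sN=\sM^\Sigma_{1,1}$ is an open subset of $\BP^3$ (Proposition~\ref{prop-lm}). Its fibers are smooth non-hyperelliptic genus $4$ curves lying on the smooth quadric $\BP^1\times\BP^1$, since $\sN$ sits inside $\sM$. Bigness of $\xi$ on $\sN$ is exactly the content of Theorem~\ref{thm:intro-triple}. That theorem was obtained in Section~\ref{sec:triple} from the criterion of Theorem~\ref{thm: criterion}, via the following inputs:
\begin{itemize}
\item Corollary~\ref{cor:minimal-J}, giving $A_{\min}=\sJ_\sN$;
\item the splitting of $B$ into partial products of the pairwise non-isogenous factors $\CE_i$;
\item the dimension count $\dim J_I(\sN)\ge 3-|I|$, coming from the finiteness of $J$ in Proposition~\ref{prop:jacobian-decomposition-N}.
\end{itemize}
A mild point to address is that $\xi$ is defined only over the quadratic extension where the rulings are defined, and only up to sign. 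Bigness is insensitive to both issues: passing to a finite étale cover of $\sN$ (or base change to $\bar\BQ$) changes nothing, and $h_\NT(-x)=h_\NT(x)$.

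\textbf{Step 2: apply Proposition~\ref{prop:northcott-general}.} This gives a dense open $U\subseteq\sN$, together with $\epsilon>0$ and $c$, such that $\hat h(\xi_C)\ge\epsilon h_{\mathrm{Fal}}(C)-c$ on $U(\bar K)$. It also gives finiteness of the set of points of bounded degree and bounded Faltings height.

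\textbf{Main obstacle.} The only step needing care is the finiteness itself: points $s\in U$ of bounded degree with $h_{\mathrm{Fal}}(C_s)\le B$ must be finite in number. I would argue this directly rather than through the Néron--Tate height.
\begin{itemize}
\item The Faltings height is a height on $\CA_4$ attached to an ample (Hodge) bundle, up to bounded error on compactifications (Faltings--Chai). So only finitely many moduli points of $\CA_4$ have bounded degree and bounded $h_{\mathrm{Fal}}$.
\item The Torelli map is injective on curves.
\item The composite $\sN\to\sM_4\to\CA_4$ is quasi-finite, by Proposition~\ref{prop:jacobian-decomposition-N}: the map $J$ is finite, hence the Jacobian already determines $s$ up to finitely many choices.
\end{itemize}
Together these give the finiteness. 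Shrinking $U$ if necessary to the locus where this map is quasi-finite, the stated set is finite, completing the proof.
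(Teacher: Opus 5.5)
Your proposal is correct and is essentially the paper's argument: the paper combines Theorem~\ref{thm:intro-triple} with Proposition~\ref{prop:northcott-general}, and the finiteness step there also rests only on Northcott for the Faltings height of the moduli points, so (as your direct argument shows) the bigness inequality is not actually used for the statement as written, and one could even take $U=\sN$. What you add is the quasi-finiteness of $\sN\to\CA_4$, which the paper leaves implicit; two small precisions: it follows from Torelli together with the finiteness of $\Sigma$-stable embeddings into $\BP^1\times\BP^1$ shown in the proof of Proposition~\ref{prop:jacobian-decomposition-N} (finiteness of $J$ alone also needs that $\Aut(C)$ has only finitely many Klein four subgroups), and $h_{\mathrm{Fal}}$ agrees with the Hodge-bundle height only up to $O(\log h)$ rather than a bounded error, which still suffices for Northcott.
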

	\begin{proof}
		This follows immediately from Theorem~\ref{thm:intro-triple} and Proposition~\ref{prop:northcott-general}.
	\end{proof}
	
	\begin{corollary}
		\label{cor:northcott-CM}
		Let $S$ be a family as in Theorem~\ref{thm:intro-cm}. Then there exists a Zariski open dense subset $U \subseteq S$ such that for every real number $B$ and every positive integer $d$, the set
		\[
		\left\{\, s \in U(\overline{K}) : h_{\mathrm{Fal}}(C)\le B \text{ and } [K(s):K]\le d \,\right\}
		\]
		is finite.
	\end{corollary}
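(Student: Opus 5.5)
The corollary follows by combining Theorem~\ref{thm:intro-cm} with Proposition~\ref{prop:northcott-general}, exactly as Corollary~\ref{cor:northcott-N} was deduced from Theorem~\ref{thm:intro-triple}.

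First, I check the hypotheses of Proposition~\ref{prop:northcott-general}. A family $S$ as in Theorem~\ref{thm:intro-cm} parametrizes smooth genus $4$ curves. By the conventions of the paper, a family of curves means a smooth proper morphism with smooth non-hyperelliptic geometric fibers. If $S$ is not irreducible, I replace it by the irreducible component containing both $C_0$ and $C_1$. If the two curves lie on different components, I apply the argument to each component separately, and the open set $U$ is then a union. I expect this bookkeeping to be harmless.

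Next, Theorem~\ref{thm:intro-cm} gives that $\xi$ is big on $S$. That theorem's proof runs through the criterion Theorem~\ref{thm: criterion} using the simplicity of $\Jac(C_0)$ (Lemma~\ref{lem-cm}), which forces $A_{\min}=\sJ_S$ and leaves only the cases $B=0$ and $B=\sJ_S$. By Definition~\ref{def:bigness-intro}, bigness yields a nonempty Zariski open $U\subseteq S$ and constants $\epsilon>0$, $c$ with $\hat h(\xi_C)\ge \epsilon\, h_{\mathrm{Fal}}(C)-c$ for all $C\in U(\overline K)$.

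Finally, I invoke Proposition~\ref{prop:northcott-general} on this $U$. The resulting finiteness rests on Northcott for $h_{\mathrm{Fal}}$ on the moduli of principally polarized abelian varieties, combined with Torelli. These show that bounded degree and bounded Faltings height give finitely many isomorphism classes of curves. Since the modular map $S\to \sM_4$ is quasi-finite on a suitable open subset, shrinking $U$ if necessary, this gives finitely many points $s$.

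The main obstacle I expect is this last quasi-finiteness step. An arbitrary family $S$ might map to moduli with positive-dimensional fibers, and then finiteness in $S$ would fail. I would handle it by noting that bigness itself rules this out. If a positive-dimensional subvariety of $S$ mapped to a single curve, then both $h_{\mathrm{Fal}}$ and $\hat h(\xi)$ would be constant along it, and $c_1(\bar L)$ would degenerate in those directions. By Lemma~\ref{lem3} and Corollary~\ref{lem2}, this contradicts bigness generically on $S$. So the generic fiber of the modular map is finite, and I can shrink $U$ to an open set on which it is quasi-finite.
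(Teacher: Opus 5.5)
Your proposal is correct and follows the paper's own proof, which just combines Theorem~\ref{thm:intro-cm} with Proposition~\ref{prop:northcott-general}. Your extra argument that bigness forces $S\to\sM_4$ to be generically finite makes explicit what the paper leaves inside the phrase ``the corresponding moduli points''. It works because you use bigness in the volume sense of Corollary~\ref{lem2}; the bare inequality of Definition~\ref{def:bigness-intro} does not detect isotrivial directions.

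The only slip is the componentwise fallback. Theorem~\ref{thm:intro-cm} needs $C_0$ and $C_1$ on one irreducible base, because its proof combines the simplicity of $\Jac(C_0)$ with the non-torsion of $\xi_{C_1}$ over a single connected $S$. So simply take $S$ irreducible, as Proposition~\ref{prop:northcott-general} does.
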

	
	\begin{proof}
		This is immediate from Theorem~\ref{thm:intro-cm} and Proposition~\ref{prop:northcott-general}.
	\end{proof}
\subsection{Transcendental points and non-torsion}

We next record a consequence for complex fibers corresponding to transcendental points of the parameter space.

\begin{proposition}
	\label{prop:transcendental}
	Let $S$ be an irreducible family of smooth non-hyperelliptic genus $4$ curves over a number field, and assume that $\xi$ is big on an open subset  $U\subset S$. 
 If $s \in U(\CC)$ is not algebraic over the ground field, then the canonical quadratic point $\xi_{C}$ is non-torsion.
\end{proposition}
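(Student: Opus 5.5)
The plan is a specialization argument: a torsion relation at a transcendental point spreads out over a positive-dimensional subvariety, where bigness forbids it. Let $S$ be defined over a number field $K$, and suppose $s\in U(\BC)$ is not algebraic over $K$ with $\xi_{C_s}$ torsion, say $n\,\xi_{C_s}=0$ for some $n\geq 1$.

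The first step is to consider the locus $Z_n:=\{t\in S : n\,\xi(t)=0\}$. This is the preimage of the zero section of $\sJ_S$ under the morphism $[n]\circ\xi:S\to\sJ_S$. Since the zero section is closed and both maps are defined over $K$, $Z_n$ is a Zariski closed subset of $S$ defined over $K$. By assumption $s\in Z_n(\BC)$.

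The second step is to use that $s$ is transcendental. Let $Z$ be an irreducible component of $Z_n$ through $s$, defined over $\bar K$. If $\dim Z=0$, then $Z$ is a $\bar K$-point and $s$ would be algebraic. Hence $\dim Z\geq 1$. The point $s$ lies in $U\cap Z$, so $U\cap Z$ is a nonempty open subset of $Z$. In particular it contains infinitely many $\bar K$-points of bounded degree over $K$: for instance, cut $Z$ with general hyperplanes defined over $K$ to reach a curve, then take points of degree at most $\deg Z$.

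The third step is to derive the contradiction from bigness. For every $t\in (U\cap Z)(\bar K)$ we have $h_\NT(\xi_t)=0$, so bigness gives $\hFal(C_t)\leq c/\epsilon$. These points also have bounded degree. Their Faltings heights are bounded, and this bounds the heights of the moduli points, so the Northcott property from Proposition~\ref{prop:northcott-general} forces this set to be finite. This contradicts the infinitude found in the second step.

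The main obstacle is that the open set $U$ in the definition of bigness could be disjoint from every positive-dimensional torsion locus, so the finiteness must be applied exactly on $U$. This is why the hypothesis places $s$ in $U(\BC)$, which guarantees $U\cap Z\neq\emptyset$. A second subtlety is that Faltings-height Northcott holds on the moduli side, so one needs the map $S\to\sM_4$ to be quasi-finite on $U$. If it is not, I would first shrink $U$, or replace the Faltings height by an ample height on $S$ using Theorem~\ref{thm-YZ}: for any adelic line bundle $\bar M$ on $S$, and in particular an ample one, $h_{\bar L}\geq\epsilon h_{\bar M}-c$ on $U$. This gives Northcott on $U$ directly, because $h_{\bar L}(t)=h_\NT(\xi_t)$.
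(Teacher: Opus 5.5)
Your argument is essentially the paper's: the paper takes the Zariski closure of $s$, which lies in the $K$-defined torsion locus and is positive-dimensional, and contradicts the Northcott property on $U$; you supply the details it omits (the closed set $Z_n$, the component through $s$, and the infinitely many bounded-degree points of $U\cap Z$). Drop the fallback in your last paragraph, though: shrinking $U$, or passing to the $\bar M$-dependent open set of Theorem~\ref{thm-YZ}, can exclude $s$ (a transcendental point may lie on a proper $\bar K$-subvariety, e.g.\ a curve contracted by the modular map), so the proof needs Northcott on the given $U$ as supplied by Proposition~\ref{prop:northcott-general}, and without that the statement can fail, for instance for an isotrivial family whose fiber has torsion $\xi$.
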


\begin{proof}
	Assume for contradiction that $\xi_{C}$ is torsion for some transcendental point $s \in U(\BC)$. Let $X$ be the Zariski closure of $s$.
	Then the Néron--Tate height of $\xi_x$ is zero for every $x\in X(\bar \BQ)$. This contradicts to the Northcott property.
	\end{proof}

	\end{document}